\documentclass[a4paper,11pt]{article}

% === Encodage et typographie ===
\usepackage[T1]{fontenc}      % Encodage des polices
\usepackage[utf8]{inputenc}   % Encodage UTF-8
\usepackage{lmodern}          % Police moderne pour un meilleur rendu
\usepackage[english]{babel}   % Langue du document
\usepackage{microtype}        % Amélioration typographique

% === Mise en page ===
\usepackage[a4paper, margin=1in]{geometry} % Marges ajustées
%\usepackage{fancyhdr}         % Personnalisation des en-têtes et pieds de page

% === Mathématiques ===
\usepackage{amsmath, amssymb, amsfonts, amsthm} % Ensemble complet AMS
\usepackage{mathtools}        % Améliorations de amsmath
\usepackage{thmtools}
\usepackage{bm}               % Gras pour symboles mathématiques
\usepackage{bbm}              % Indicateur \mathbbm{1}
\usepackage{mathrsfs}         % Police calligraphique (\mathscr)
\usepackage{physics}          % Notation de physique (dérivées, kets, etc.)
\usepackage{tikz-cd}          % Diagrammes commutatifs
\usepackage{enumitem}         % Personnalisation des listes
%\usepackage{relsize}

% === Graphiques et couleurs ===
\usepackage{xcolor}           % Couleurs
\usepackage{graphicx}         % Inclusion d'images

% === Références et hyperliens ===
\usepackage{hyperref}         % Liens hypertexte
\usepackage{cleveref}         % Références intelligentes (ex : "Equation (1)")

\usepackage{float}
\usepackage{color,graphicx,stmaryrd}
\definecolor{colorlinks}{RGB}{0, 24, 168}
\definecolor{colorcites}{RGB}{124, 10, 2}
\hypersetup{
	colorlinks=true,
	linkcolor=colorlinks,
	citecolor=colorcites,
	urlcolor=colorlinks,
	pdfborder={0 0 0}
}

% === Environnements mathématiques ===
\newtheorem{theorem}{Theorem}[section]

\newtheorem{lemma}[theorem]{Lemma}
\newtheorem*{lemma*}{Lemma}
\newtheorem{proposition}[theorem]{Proposition}
\newtheorem{corollary}[theorem]{Corollary}

\theoremstyle{definition}
\newtheorem{definition}[theorem]{Definition}
\newtheorem*{example}{Example}

\theoremstyle{remark}
\newtheorem{remark}[theorem]{Remark}

% === Commandes personnalisées ===

% --- Ensembles usuels ---
\newcommand*{\nombres}[1]{\mathbb{#1}}
\newcommand*{\NN}{\nombres{N}}
\newcommand*{\ZZ}{\nombres{Z}}

\newcommand*{\RR}{\nombres{R}}
\newcommand*{\CC}{\nombres{C}}

% --- Notations spécifiques ---

\newcommand*{\ldef}{\coloneqq}

\newcommand*{\term}[1]{\emph{#1}}

% --- Opérateurs mathématiques ---

\DeclareMathOperator{\re}{Re} 
\DeclareMathOperator{\im}{Im}

\newcommand{\bigsum}{\mathop{\raisebox{-0.5ex}{\scalebox{1.7}{$\sum$}}}\nolimits}

\begin{document}

\title{A discrete approach to Dirichlet L-functions, their special values and zeros}

\author{Anders Karlsson and Dylan Müller}

\date{May 15, 2026}

\maketitle

\begin{abstract}
We develop a discrete spectral framework for Dirichlet $L$-functions that reveals a combinatorial structure underlying their special values and connects this to their zeros. % of the functions.

Our approach approximates the classical Dirichlet series by finite spectral sums $L_n(s,\chi)$ associated with cyclic graphs $\mathbb{Z}/n\mathbb{Z}$ and studies their asymptotics as $n\rightarrow \infty$. Combining a refined Euler–Maclaurin expansion with a structural polynomiality property% of these finite models
, we show that at integer arguments  the asymptotic expansions terminate and yield exact identities. This “asymptotic-to-exact” principle produces new infinite families of relations among special values of Dirichlet $L$-functions and recovers, by a different mechanism, formulas previously obtained by Xie–Zhao–Zhao.

An interesting feature of our method is that $\zeta(2n)$ and the corresponding special values for all Dirichlet $L$-functions thereby admit
%, to our knowledge, 
a finite combinatorial interpretation in terms of rooted spanning forests on any fixed cyclic graph.

Concerning zeros, the same framework leads to some remarks about real zeros and a reformulation of the Generalized Riemann Hypothesis in the case of odd primitive characters in terms of an asymptotic functional equation relating $\xi_n(1-s,\overline{\chi })$ to $\xi_n(s,\chi)$ of the completed discrete functions. 
This establishes the remaining case of the one-dimensional picture obtained in earlier works.

\end{abstract}
\section{Introduction}
Dirichlet $L$-functions, defined by the meromorphic continuation of
\[ L(s,\chi) =\sum_{n \ge 1}\frac{\chi (n)}{n^s}, \]
play a central role in analytic number theory, through their Euler product, special values, functional equation, and zeros. In this paper we use a spectral viewpoint and asymptotics that simultaneously lead to infinite families of new recursive identities among special values (Corollaries \ref{cor:zetaspecial} and \ref{Corollaryspecial}),  exact formulas for certain trigonometric sums (Theorem \ref{Theorem:trig}), and 
a reformulation of the Generalized Riemann Hypothesis in terms of a certain asymptotic functional equation (Theorem \ref{GRH for odd characters}).

Our approach gives the following combinatorial interpretation of Euler's celebrated values $\zeta(2n)$ and the corresponding special values for all Dirichlet $L$-functions. First, they are connected to the analogous discrete spectral functions via asymptotics. Second, the special values of the latter are symmetric rational functions in the eigenvalues and are therefore expressible in terms of the number of spanning forests. Finally, this %implies the location of the trivial zeros and 
leads eventually to that the classical special values of $\zeta(s)$ and $L(s,\chi)$ can be calculated by simple counting on any single graph $\mathbb{Z}/n\mathbb{Z}$. See Fig.~\ref{fig:spanning} for an illustration in the case of $\zeta(2)$ with $n=2$; the same formula applies for any $n \geq 1$ with its corresponding combinatorial data (the blue numbers). 

\begin{figure}[H] \label{fig:spanning}
\centering
\begin{tikzpicture}[scale=1, >=Latex]

% Z/2Z
\begin{scope}[shift={(0,0)}]
    \draw[thick] (0,0) circle [radius=0.5];
    \node at (-0.5,0) {$\bullet$};
    \node at (0.5,0) {$\bullet$};
\end{scope}

% Arrow to trees
\draw[->, thick] (1,0.3) -- (4,1.1);

% Arrow to forest
\draw[->, thick] (1,-0.3) -- (4,-1.1);

% Rooted spanning trees
\begin{scope}[shift={(5,1.1)}]
    \begin{scope}[shift={(0,0)}, scale=0.5]
        \draw[thick] (0,0) arc[start angle=180,end angle=0,radius=1];
        \draw[thick,dotted] (0,0) arc[start angle=180,end angle=360,radius=1];
        \node at (0,0) {\textcolor{red}{$\bullet$}};
        \node at (2,0) {$\bullet$};
    \end{scope}

    \begin{scope}[shift={(2,0)}, scale=0.5]
        \draw[thick] (0,0) arc[start angle=180,end angle=0,radius=1];
        \draw[thick,dotted] (0,0) arc[start angle=180,end angle=360,radius=1];
        \node at (0,0) {$\bullet$};
        \node at (2,0) {\textcolor{red}{$\bullet$}};
    \end{scope}

    \begin{scope}[shift={(4,0)}, scale=0.5]
        \draw[thick,dotted] (0,0) arc[start angle=180,end angle=0,radius=1];
        \draw[thick] (0,0) arc[start angle=180,end angle=360,radius=1];
        \node at (0,0) {\textcolor{red}{$\bullet$}};
        \node at (2,0) {$\bullet$};
    \end{scope}

    \begin{scope}[shift={(6,0)}, scale=0.5]
        \draw[thick,dotted] (0,0) arc[start angle=180,end angle=0,radius=1];
        \draw[thick] (0,0) arc[start angle=180,end angle=360,radius=1];
        \node at (0,0) {$\bullet$};
        \node at (2,0) {\textcolor{red}{$\bullet$}};
    \end{scope}
\end{scope}

% Rooted two forest
\begin{scope}[shift={(5,-1.1)}, scale=0.5]
    \draw[thick,dotted] (1,0) circle [radius=1];
    \node at (0,0) {\textcolor{red}{$\bullet$}};
    \node at (2,0) {\textcolor{red}{$\bullet$}};
\end{scope}

\end{tikzpicture}
 \caption{The graph \( \ZZ/\textcolor{blue}{2}\ZZ \) has $\textcolor{blue}{4}$ rooted spanning trees and $\textcolor{blue}{1}$ rooted spanning $2$-forest. Hence,%This leads to\\
 \\
      \protect\parbox{\linewidth}{\centering
        \(\displaystyle \zeta(\textcolor{purple}{2}) = \left( \tfrac {\textcolor{blue}{1}}{\textcolor{blue}{4}}-\tfrac{\textcolor{purple}{2}}{12} \left( -\tfrac{1}{2} \right) \right)\tfrac{(2\pi)^{\textcolor{purple}{2}}}{2\cdot \textcolor{blue}{2}^{\textcolor{purple}{2}}}=\tfrac{\pi^2}{6}.\)
      }
      }

\end{figure}

This asymptotic-to-exact mechanism leads to generalizations of the formulas of Xie-Zhao-Zhao \cite[Theorem A]{XZZ24}, in particular, for any even Dirichlet character $\chi$ mod $q$ and any integers $m,n>0$,
$$
\bigsum\limits_{k=0}^m a_{m-k}(m) \frac{L(2k,\chi)}{(2\pi/qn)^{2k}}=2^{-2m-1}\sum_{j = 1}^{qn-1} \chi(j) \sin^{-2m}(\tfrac{\pi j}{qn}),
$$
where $a_k(s)$ are universal, explicit polynomials of degree $k$ with rational coefficients. 
In the case of the Riemann zeta function $\zeta(s)$, the simplest identity of this type is 
\begin{equation} \label{eqn:zetaspecial}
\bigsum\limits_{k=0}^m a_{m-k}(m)\frac{\zeta(2k)}{(2\pi)^{2k}}=0.
\end{equation}
See Corollary \ref{cor:zetaspecial} for an infinite family of such relations. A version of this corollary already appeared in a paper by Zagier on Verlinde formulas \cite[Theorem 1.iii]{Zag96}, and other types of recursive relations can be found in the works of Euler, Ramanujan and others \cite{SD86, Me17}. 

Our method is to view $\zeta(s)$  and $L(s,\chi)$ as spectral objects arising from the usual Laplacian, and to approximate them by quantities defined in a completely parallel way from the combinatorial Laplacian. This continues the approach set forth in  \cite{CJK10,FK17, F16, MV22, XZZ24}.

An infinite sum, as the Dirichlet series displayed above, is by definition the limit of the partial sums. Here we take instead limits of finite sums with a much richer structure: they are defined entirely analogously as Mellin transforms of heat kernels, like in Riemann's memoir or as emphasized by Jorgenson-Lang \cite{JL11}. Thanks to this structure, these sums have independent interests in various physics and mathematics contexts, see \cite{K20,JKS24} for references. For example, the special value $-\zeta_X '(0)$ is the logarithm of the number of rooted spanning trees, which is an invariant with relevance from electrical circuit theory to the study of water systems \cite{BR18}.

The \emph{spectral zeta function} of a graph or a space $X$ is the Mellin transform of the trace of the heat kernel associated to a Laplacian on $X$: 
\[ \zeta_X(s) \ldef \frac{1}{\Gamma(s)}\int_0^{\infty}\Tr^*(K_X(t))t^s\frac{dt}{t}, \]
where \( \Tr^* \) is the usual trace but discarding the $0$ eigenvalue of the Laplace operator when it is in the point spectrum, and where \( \Gamma \) is the gamma function. For infinite transitive graphs one can use $K_X(t,x,x)$ instead of a trace.

Friedli \cite{F16} studied the following discrete analogue of Dirichlet $L$-functions:
\[ L_n(s,\chi) = \sum_{j = 1}^{qn-1}\frac{\chi(j)}{\left[4\sin^2(\frac{\pi j}{qn})\right]^s}, \]
for even and non-principal \( \chi \) of modulus \( q > 1 \), that is $\chi(-1)=1$. The denominators consist of the non-zero Laplace eigenvalues of the graph of $\mathbb{Z}/qn\mathbb{Z}$. Note that for odd characters, the above function vanishes identically. Therefore, following Xie-Zhao-Zhao in \cite{XZZ24}, we instead define the discrete $L$-function $L_n$ for odd characters of modulus $q>1$ to be
\[
L_n(s,\chi) = \sum_{j=1}^{qn-1} \chi(j) \frac{\cot\left(\frac{\pi j}{qn}\right)}{\left[4\sin^2\left(\frac{\pi j}{qn}\right)\right]^s}.
\]

\subsection{Asymptotics.}
Similarly to \cite{FK17} in the case of the Riemann zeta function, see Theorem~\ref{Asymptotic in the case d=1} below, and by Friedli \cite{F16} for even characters, see Theorem \ref{Asymptotic for even} below, we use 
a modified Euler-Maclaurin formula that allows for endpoint singularities, developed by Sidi in  \cite{Sid04} and \cite{Sid12} in the context of numerical analysis. We get:

\begin{theorem}[Asymptotics for odd characters]\label{Asymptotic for L tilde}
Let $\chi$ be an odd Dirichlet character of modulus $q > 1$. Then, for any fixed $s \in \CC $ and any integer $l \ge 0$, the following asymptotic expansion holds as $n \to \infty$:
\[
L_n(s,\chi) = 2 \sum_{k=0}^{l} b_k(s)  L(1+2(s - k),\chi)\left(\frac{qn}{2\pi}\right)^{1+2(s-k)} + \mathcal{O}(n^{1 +2(s- (l+1))}),
\]
where the functions $b_k(s)$ are polynomials of degree $k$, given by the Laurent expansion
\[
\left(\frac{z/2}{\sin (z/2)}\right)^{2s} \cot (z/2) = \sum_{k \ge 0} b_k(s) z^{2k-1}.
\]
\end{theorem}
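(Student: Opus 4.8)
The plan is to reduce the twisted finite sum to the sampling of a single ``shape function'' with an algebraic endpoint singularity, and then to apply Sidi's generalized Euler--Maclaurin formula in its offset form, exactly as in the Riemann case (Theorem~\ref{Asymptotic in the case d=1}) and the even-character case (Theorem~\ref{Asymptotic for even}). First I would record the shape function. Writing $x=j/(qn)$ and $z=2\pi x$, the summand is $\chi(j)\,h(x)$ with $h(x)=\cot(\pi x)/[4\sin^2(\pi x)]^s$, and a direct manipulation of the defining generating series gives, since $4^{s}=2^{2s}$,
\[
h(x)=4^{-s}\,(z/2)^{-2s}\Bigl(\tfrac{z/2}{\sin(z/2)}\Bigr)^{2s}\cot(z/2)=\sum_{k\ge 0}b_k(s)\,(2\pi)^{2k-1-2s}\,x^{\,2k-1-2s}.
\]
Thus $h$ has a pure algebraic singularity as $x\to 0^{+}$ with exponents $\gamma_k=2k-1-2s$, and truncating at $k=m$ leaves a remainder of order $x^{\gamma_{m+1}}$; the same expansion (up to the sign coming from $\cot(\pi-\theta)=-\cot\theta$) governs the singularity as $x\to 1^{-}$. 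The polynomiality and degree of $b_k(s)$ follow by combining the Taylor expansion of $(\tfrac{z/2}{\sin(z/2)})^{2s}$, whose coefficients are polynomials in $s$, with the Laurent expansion of $\cot(z/2)$.

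Next I would split the sum along residues modulo $q$. Since $\chi$ has period $q$ and $qn\equiv 0\pmod q$,
\[
L_n(s,\chi)=\sum_{a=1}^{q-1}\chi(a)\sum_{m=0}^{n-1}h\!\left(\tfrac{a+qm}{qn}\right),
\]
so each inner sum is an offset trapezoidal sampling of $h$ on $(0,1)$ with grid spacing $1/n$ and fractional offset $a/q$. Sidi's theorems \cite{Sid04,Sid12} then expand each inner sum: the endpoint at $x\to 0$ contributes $\sum_k b_k(s)(2\pi)^{\gamma_k}\,\zeta(-\gamma_k,a/q)\,n^{-\gamma_k}$ (Hurwitz zeta at the offset), the endpoint at $x\to 1$ contributes the mirror term at offset $(q-a)/q$, and the remaining bulk term is the (finite-part) integral $n\int_0^1 h$, which is independent of $a$. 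Summing against $\chi$, the bulk term dies because $\sum_{a}\chi(a)=0$ for non-principal $\chi$, while the singular corrections assemble, via $\sum_{a}\chi(a)\zeta(-\gamma_k,a/q)=q^{-\gamma_k}L(-\gamma_k,\chi)$ and $-\gamma_k=1+2(s-k)$, into $b_k(s)\,L(1+2(s-k),\chi)\,(qn/2\pi)^{1+2(s-k)}$. Using $\chi(-1)=-1$ together with the sign flip of $h$ at $x\to 1$, the two endpoints contribute equally, producing the factor $2$; equivalently, one checks directly that the summand is invariant under $j\mapsto qn-j$ (the middle term, if present, vanishes because $\cot(\pi/2)=0$). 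Finally, truncating the shape expansion at $k=m$ and feeding the $O(x^{\gamma_{m+1}})$ remainder through Sidi's remainder estimate yields the error $O(n^{-\gamma_{m+1}})=O(n^{1+2(s-(m+1))})$.

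The main obstacle is the bookkeeping that makes the character twist interact cleanly with Sidi's formula: one must verify that the non-principal character annihilates the bulk integral and every ``smooth'' Euler--Maclaurin term, uniformly in the truncation order, so that only the singularity-induced $L$-values survive, and that the two singular endpoints contribute with matching signs. A secondary technical point is the legitimacy of $\sum_{a}\chi(a)\zeta(-\gamma_k,a/q)=q^{-\gamma_k}L(-\gamma_k,\chi)$ for arbitrary fixed $s$: this holds first in the half-plane of absolute convergence and then for all $s$ by analytic continuation, using that $L(\cdot,\chi)$ is entire for non-principal $\chi$, and one must check that Sidi's expansion is uniform enough in $s$ on compact sets to commute with this continuation.
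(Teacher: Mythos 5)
Your proposal follows, in substance, the same route as the paper's proof: the paper also writes $L_n(s,\chi)$ as a character-weighted sum of offset samplings of the odd shape function $g_s(z)=\cot(\pi z)[4\sin^2(\pi z)]^{-s}$ (packaged there as $\theta$-derivatives of bundle zeta functions, $\sum_r \chi(r)\partial_\theta\zeta_n(s,r/q)$), applies Sidi's Euler--Maclaurin expansion at the offsets $\theta=a/q$, kills the bulk term (by oddness of $g_s$ about $z=\tfrac12$ rather than by $\sum_a\chi(a)=0$; both work), and assembles the two endpoint contributions into Dirichlet $L$-values with the same parity argument producing the factor $2$. Your generic-$s$ computation --- the exponents $\gamma_k=2k-1-2s$, the mirror offset $(q-a)/q$ with the sign flip, and the identity $\sum_a\chi(a)\zeta(-\gamma_k,a/q)=q^{-\gamma_k}L(-\gamma_k,\chi)$ --- is correct and reproduces the paper's formula.

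There is, however, a genuine gap at exactly the values of $s$ for which the theorem is later used. When $s\in\ZZ_{\ge 0}$, the exponent $\gamma_s=-1$ occurs in the expansion of $h$, and this is the one exponent that Sidi's formula does not treat in the way you assert: an $x^{-1}$ endpoint singularity does not contribute $\zeta(-\gamma_k,a/q)\,n^{-\gamma_k}=\zeta(1,a/q)\,n$ --- that Hurwitz value is the pole --- but instead a term of the shape $n\bigl(\log n-\psi(a/q)\bigr)$ up to constants. So your statement that the left endpoint contributes $\sum_k b_k(s)(2\pi)^{\gamma_k}\zeta(-\gamma_k,a/q)n^{-\gamma_k}$ is literally false for nonnegative integer $s$, which includes all the positive integers $s=p$ feeding the special-value corollaries (there the $k=p$ term carries $L(1,\chi)$). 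The paper handles this case explicitly: it inserts a correction $\tfrac{n}{2\pi}\delta_s(\psi(1-\theta)-\psi(\theta))$ with $\delta_s=b_s(s)$ for $s\in\ZZ_{\ge0}$, uses the oddness $g_s(1-z)=-g_s(z)$ so that the $\log n$ contributions from the two endpoints cancel, and then re-absorbs the digamma term into the main sum via the fact that $\zeta(w,\theta)-\zeta(w,1-\theta)$ is regular at $w=1$ with value $\psi(1-\theta)-\psi(\theta)$, i.e.\ the two Hurwitz poles cancel in the odd combination (this regularized difference is what $\partial_\theta\zeta_{\RR/\ZZ}$ means there). Your fallback --- analytic continuation in $s$ from generic values --- is a legitimate alternative, but, as you yourself note, it requires uniformity of Sidi's remainder in $s$ on compact sets, which Sidi's theorem (stated for a fixed exponent configuration) does not provide and which you do not establish. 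As written, the proposal therefore does not cover $s\in\ZZ_{\ge0}$; you should either carry out the $\delta_s$/digamma bookkeeping or prove the needed uniformity.
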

The first coefficients are $b_0(s)=2$, $b_1(s)=(s-1)/6$ and $b_2(s) = (5s^2-9s-2)/{720}$.
\subsection{Special values.}
The fact that limits of certain trigonometric sums provide a method to evaluate Euler's special values for $\zeta(s)$ has been observed in several places, for example by Apostol \cite{Ap73} or Witten \cite[p. 177-178]{Wi91}. A remarkable discovery of Xie-Zhao-Zhao \cite{XZZ24} is that there is in fact no need to take a limit. The finite cyclic graph approximates the circle, like in Archimedes, but in contrast, for special values there is no need for any limits. In a precise sense, already the graph on one vertex and one loop knows $\pi^2/6$ (from equation (\ref{eqn:zetaspecial}) above with $m=1$ and  $\zeta(0)=-1/2$). 

Our method is to combine a refined Euler–Maclaurin–Sidi asymptotics with the structural polynomiality of the spectral zeta functions. This gives that, an asymptotic expansion that terminates because of trivial zeros, must in fact coincide with the exact finite $n$ identity. Curiously, this allows one to derive nontrivial closed formulas already at $n=1$ from an argument initially designed for $n\rightarrow\infty$. 

This asymptotic-to-exact method seems applicable in other settings, and we think it will yield further applications.

We prove the following theorem, and refer to \cite{JKS24} for previous results and pointers to the extensive literature about such sums. First, let  \( s \mapsto a_k(s) \) be the polynomial functions of degree \( k \), given by the Taylor expansion 
\[ \left(\frac{z/2}{\sin (z/2)}\right)^{2s} = \sum_{k \ge 0} a_k(s) z^{2k}. \]
The first coefficients are $a_0(s)=1$, $a_1(s)=s/12$, and $a_2(s)=(5s^2+s)/1440$. 

\begin{theorem}\label{Theorem:trig}
For any \( 0 < \theta < 1 \) and integers \( n,m \ge 1 \), the following holds: 
\begin{equation}\label{Eq:Trig:even}
\sum_{j = 0}^{n-1} \sin^{-2m}\left(\frac{\pi(j + \theta)}{n}\right) = 2^{2m}\sum_{k = 0}^m a_{m-k}(m)\zeta_{\RR/\ZZ}(k,\theta)n^{2k},
\end{equation}
and 
\begin{equation}\label{Eq:Trig:odd}
\sum_{j = 0}^{n-1}\cot\left(\frac{\pi(j + \theta)}{n}\right)\left[\sin\left(\frac{\pi(j + \theta)}{n}\right)\right]^{-2m} = -\frac{2^{2m}}{2 \pi m} \sum_{k = 0}^m a_{m-k}(m)\, \partial_\theta \zeta_{\RR/\ZZ}(k,\theta)n^{2k+1}.
\end{equation}
\end{theorem}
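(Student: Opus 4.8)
The plan is to establish the even-power identity \eqref{Eq:Trig:even} first and then deduce the cotangent identity \eqref{Eq:Trig:odd} from it by a single differentiation in $\theta$. Writing $S_p(n,\theta)$ for the left-hand side of \eqref{Eq:Trig:even}, I would prove \eqref{Eq:Trig:even} by the asymptotic-to-exact mechanism advertised in the introduction: combine (i) a twisted Euler–Maclaurin–Sidi expansion of $S_p(n,\theta)$ as $n\to\infty$, (ii) an independent proof that $S_p(n,\theta)$ is a polynomial in $n$, and (iii) the vanishing of the continuous zeta function at non-positive integers, which makes the expansion terminate and hence coincide with the polynomial.

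For (i), note that $x\mapsto \sin^{-2s}(\pi x)$ has an order-$2s$ algebraic singularity at each integer, and that $S_p(n,\theta)$ samples it at the shifted lattice $x=(j+\theta)/n$, whose range $(0,1)$ abuts exactly the two singularities at the endpoints $0$ and $1$. This is the setting of Sidi's formula already used for the even- and odd-character cases (Theorems~\ref{Asymptotic for even} and~\ref{Asymptotic for L tilde}). Running the same machinery, the two endpoints contribute Hurwitz values $\zeta(2(s-k),\theta)$ and $\zeta(2(s-k),1-\theta)$, which combine into the twisted-circle spectral zeta $\zeta_{\RR/\ZZ}(s,\theta)=\sum_{m\in\ZZ}(2\pi(m+\theta))^{-2s}$, while the singular Taylor data $\big(\tfrac{z/2}{\sin(z/2)}\big)^{2s}=\sum_k a_k(s)z^{2k}$ supply the coefficients. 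This should yield, for every $m\ge 0$,
\[
S_p(n,\theta)=2^{2p}\sum_{k=0}^{m} a_k(p)\,\zeta_{\RR/\ZZ}(p-k,\theta)\,n^{2(p-k)}+\mathcal{O}\big(n^{2(p-m-1)}\big).
\]

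The termination is (iii): using $\zeta(-r,\theta)=-B_{r+1}(\theta)/(r+1)$ together with the reflection $B_r(1-\theta)=(-1)^rB_r(\theta)$ (and $\zeta(0,\theta)=\tfrac12-\theta$ at the boundary case), the two Hurwitz contributions cancel whenever the argument $p-k$ is a non-positive integer, so every term with $k\ge p$ drops out. Thus for any $m\ge p$ the surviving part is the fixed polynomial $P(n)\ldef 2^{2p}\sum_{k=0}^{p}a_k(p)\zeta_{\RR/\ZZ}(p-k,\theta)n^{2(p-k)}$ of degree $2p$, with remainder $\mathcal{O}(n^{-2})$. For (ii) I would argue directly that $S_p(n,\theta)$ is a polynomial in $n$: expand $\sin^{-2p}(\pi x)=\sum_{l=1}^{p}d_{p,l}\sum_{r\in\ZZ}(x-r)^{-2l}$ (a finite combination of derivatives of $\pi\cot(\pi x)$, valid since both sides are even, $1$-periodic, vanish at $i\infty$, and have a single pole of order $2p$ at the integers), then fold the double sum: as $j$ runs over a residue system mod $n$ and $r$ over $\ZZ$, the index $j-rn$ runs over all of $\ZZ$, giving $S_p(n,\theta)=\sum_{l=1}^{p}d_{p,l}\,n^{2l}\sum_{r\in\ZZ}(r+\theta)^{-2l}$, manifestly polynomial in $n$. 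Since $S_p(n,\theta)-P(n)$ is then a polynomial that is $\mathcal{O}(n^{-2})$, it vanishes identically, which is \eqref{Eq:Trig:even} after the reindexing $k\mapsto p-k$ (and, as a bonus, identifies $d_{p,l}$ with $2^{2p-2l}\pi^{-2l}a_{p-l}(p)$).

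Finally, \eqref{Eq:Trig:odd} is immediate from \eqref{Eq:Trig:even}: since $\partial_\theta\sin^{-2p}(\pi(j+\theta)/n)=-\tfrac{2\pi p}{n}\cot(\pi(j+\theta)/n)\sin^{-2p}(\pi(j+\theta)/n)$, the left-hand side of \eqref{Eq:Trig:odd} equals $-\tfrac{n}{2\pi p}\,\partial_\theta S_p(n,\theta)$, and applying $-\tfrac{n}{2\pi p}\partial_\theta$ to the right-hand side of \eqref{Eq:Trig:even} produces exactly the stated expression, the extra factor $n$ turning each $n^{2k}$ into $n^{2k+1}$. I expect the only real obstacle to be step (i): checking that Sidi's expansion delivers precisely the coefficients $a_k(p)$ and the combination $\zeta_{\RR/\ZZ}$, i.e. the endpoint singular-data bookkeeping that transports the template of Theorem~\ref{Asymptotic for even} to the twisted setting. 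Everything downstream — the Bernoulli cancellation, the polynomiality, and the final comparison of two polynomials — is then routine.
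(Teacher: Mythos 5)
Your proposal is correct in substance, and it diverges from the paper's proof in two genuinely interesting ways. For the even identity \eqref{Eq:Trig:even} you run the same asymptotic-to-exact mechanism as the paper (Sidi expansion, polynomiality in $n$, vanishing of $\zeta_{\RR/\ZZ}$ at non-positive integers), but your proof of polynomiality is different: the paper derives it from the Laplace transform of the bundle heat kernel and the Chebyshev form of the characteristic polynomial of $\Delta_{n,\theta}$ (Lemma \ref{Lemma Chebychev and characteristic polynomial}, Lemma \ref{lemma 2}, Corollary \ref{Polynomial behaviour}), whereas you use the Mittag--Leffler expansion $\sin^{-2p}(\pi x)=\sum_{l=1}^p d_{p,l}\sum_{r\in\ZZ}(x-r)^{-2l}$ and fold the lattice. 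Your route is more elementary, and in fact stronger than you claim: matching principal parts at $x=0$ gives $d_{p,l}=2^{2p}(2\pi)^{-2l}a_{p-l}(p)$ directly from the defining expansion of the $a_k$'s, so your step (ii) together with $\sum_{m\in\ZZ}(m+\theta)^{-2l}=(2\pi)^{2l}\zeta_{\RR/\ZZ}(l,\theta)$ already proves \eqref{Eq:Trig:even} outright, with no asymptotics at all. For the odd identity \eqref{Eq:Trig:odd}, your derivation by applying $-\tfrac{n}{2\pi p}\partial_\theta$ to the exact identity \eqref{Eq:Trig:even} is legitimate and is \emph{not} what the paper does: the paper applies the Euler--Maclaurin--Sidi formula a second time, to $\cot(\pi z)[4\sin^2(\pi z)]^{-s}$, precisely because differentiating an asymptotic expansion in $\theta$ is unjustified without control of the remainder in $\theta$ (the paper has an explicit remark about this temptation). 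That objection does not apply to you, since you differentiate an exact finite identity between smooth functions of $\theta$ on $(0,1)$; this is a genuine simplification.

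The one real flaw is in your step (i), which you yourself flag as the crux. Sidi's formula produces, besides the endpoint contributions you describe, the leading term $n\cdot\int_0^1\sin^{-2p}(\pi x)\,\mathrm{d}x$, interpreted as a finite part since the integral diverges for $p\ge1$; in the paper's normalization this is $2^{2p}\zeta_{\ZZ}(p)\,n$. In the character theorems you invoke as a template (Theorems \ref{Asymptotic for even} and \ref{Asymptotic for L tilde}), this term disappears by orthogonality, $\sum_{r}\chi(r)=0$; in your character-free setting it does not, and your claimed expansion simply omits it. The term is indeed zero, but only because $\zeta_{\ZZ}$ vanishes at positive integers (\cite[Theorem 2.2]{FK17}) --- exactly the fact the paper invokes in its own proof and that you never mention. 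Without it, your comparison only yields $S_p(n,\theta)-P(n)=cn$ with $c=2^{2p}\zeta_{\ZZ}(p)$ undetermined. Two easy patches: cite the vanishing of $\zeta_{\ZZ}$ at positive integers, or observe that your own step (ii) shows $S_p(n,\theta)$ is a polynomial in $n$ containing only even powers, as is $P(n)$, so the odd-power coefficient $c$ is forced to vanish. With either patch the argument is complete.
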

The $\zeta_{\RR/\ZZ} (s,\theta)$ is the spectral zeta function associated to a certain bundle over the circle depending on \( \theta \). As such, it is a version of the Hurwitz zeta function, and the corresponding special values at integers involve Bernoulli polynomials and polygamma functions, see section \ref{sec:Hurwitz} below. 
In particular, applying the theorem with $n=1$ recovers part of a well-known reflection relation for the polygamma functions.
The previous theorem can be extended to any integers \( m \in \ZZ \). This is described in section \ref{sec:special}. In particular, for \( -m \le 0 \),
\begin{equation}\label{Eq:special value: theta and negative m}
    \zeta_{\ZZ/n\ZZ}(-m,\theta) = n\zeta_{\ZZ}(-m),
\end{equation} 
for all \( 0 < \theta < 1 \), and all \( n > m \). See the end of this introduction for the definition of $\zeta_{\ZZ}(-m)$ (with no dependence on $\theta$). Here is the case of the Riemann zeta function:
\begin{corollary} \label{cor:zetaspecial}
For any integers \( n,m \ge 1 \), the following holds:
\[ \zeta_{\ZZ/n\ZZ}(m) = \sum_{k = 0}^{m}a_{m-k}(m)\zeta_{\RR/\ZZ}(k)n^{2k}. \]
Explicitly, the case $n=1$ is the formula (\ref{eqn:zetaspecial}) above, and in general for  $n\geq 1$,
\[
\frac{(-1)^{m+1}}{4}\sum_{a=0}^{2m}n^{-a}\left(\begin{array}{c}
2m+1\\
a+1\end{array}\right)
\sum_{j=0}^{a+1}(-1)^{j}\left(\begin{array}{c}
a+1\\
j\end{array}\right)\frac{a+1-2j}{a+1}\left(\begin{array}{c}
m+jn+(a-1)/2\\
2m+a\end{array}\right)
\]
$$
=\bigsum\limits_{k=0}^m a_{m-k}(m)\frac{\zeta(2k)}{(2\pi)^{2k}}n^{2k}.
$$
\end{corollary}

The left hand side in the last formula comes from \cite{CM99,BY02}.
The formulas in Corollary \ref{cor:zetaspecial} should be compared with those in Zagier's \cite{Zag96}. It seems unlikely that he thought of the trigonometric sums as special values of graph zeta functions.  
See also  \cite[Theorem B]{XZZ24}. 
There are also more isolated examples of such recursive relations in the literature, see \cite{SD86, Me17} and references therein.

Furthermore, the theorem has the following corollary:

\begin{corollary}\label{Corollaryspecial}
Let \( \chi \) be an even Dirichlet character of modulus \( q \) (not necessarily primitive). Then for any integers \( n \ge 1 \) and \( m \ge 0 \), 
\[ L_n(m,\chi) = 2 \sum_{k = 0}^{m}a_{m-k}(m)L(2k,\chi) \left(\frac{qn}{2\pi}\right)^{2k}. \]
Let \( \chi \) be an odd Dirichlet character of modulus \( q \) (not necessarily primitive). Then for any integers \( n \ge 1 \) and \( m \ge 0 \), 
\[L_n(m,\chi) = 2\sum_{k = 0}^{m}b_{m-k}(m)\,L(2k+1,\chi) \left(\frac{qn}{2\pi}\right)^{2k+1}. \]
\end{corollary}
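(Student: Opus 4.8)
The plan is to derive both identities directly from Theorem~\ref{Theorem:trig} by splitting the defining sum of $L_n(p,\chi)$ along residue classes modulo $q$. Every $j\in\{1,\dots,qn-1\}$ with $\gcd(j,q)=1$ can be written uniquely as $j=a+qm$ with $1\le a\le q-1$, $\gcd(a,q)=1$ and $0\le m\le n-1$; then $\frac{\pi j}{qn}=\frac{\pi(m+\theta)}{n}$ with $\theta=a/q\in(0,1)$, and $\chi(j)=\chi(a)$. Since $\chi$ vanishes on the non-coprime residues, the even $L_n$ becomes $4^{-p}\sum_{a}\chi(a)$ times the inner sum on the left of \eqref{Eq:Trig:even} with shift $\theta=a/q$, while the odd $L_n$---which carries the extra factor $\cot(\pi j/(qn))$---becomes $4^{-p}\sum_a\chi(a)$ times the left side of \eqref{Eq:Trig:odd}. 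Applying Theorem~\ref{Theorem:trig} termwise and using $4^{-p}2^{2p}=1$ turns each into a polynomial in $n$ whose coefficients are the character averages $\sum_a\chi(a)\,\zeta_{\RR/\ZZ}(k,a/q)$, respectively $\sum_a\chi(a)\,\partial_\theta\zeta_{\RR/\ZZ}(k,a/q)$.

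It then remains to recognize these averages as Dirichlet $L$-values. I would invoke from section~\ref{sec:Hurwitz} the Hurwitz description of the twisted-circle spectral zeta: since its eigenvalues are $(2\pi(m+\theta))^2$, one has $\zeta_{\RR/\ZZ}(k,\theta)=(2\pi)^{-2k}\bigl(\zeta_H(2k,\theta)+\zeta_H(2k,1-\theta)\bigr)$, where $\zeta_H$ denotes the Hurwitz zeta function, together with the classical expansion $\sum_a\chi(a)\,\zeta_H(s,a/q)=q^{s}L(s,\chi)$. In the even case the reflection $a\mapsto q-a$ sends $\zeta_H(2k,1-a/q)$ to $\zeta_H(2k,a/q)$ and fixes $\chi(a)$ because $\chi(-1)=1$, so the two Hurwitz pieces coincide and $\sum_a\chi(a)\,\zeta_{\RR/\ZZ}(k,a/q)=2(q/2\pi)^{2k}L(2k,\chi)$. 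Comparing the coefficient of $a_{p-k}(p)\,n^{2k}$ then gives
\[ L_n(p,\chi)=2\sum_{k=0}^{p}a_{p-k}(p)\,L(2k,\chi)\left(\frac{qn}{2\pi}\right)^{2k}. \]
For odd $\chi$ the same reflection makes the two pieces cancel, matching the identical vanishing of the even $L_n$ observed in the text.

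For the odd identity I would differentiate the same relation in $\theta$, using $\partial_\theta\zeta_H(s,\theta)=-s\,\zeta_H(s+1,\theta)$, to get $\partial_\theta\zeta_{\RR/\ZZ}(k,\theta)=2k(2\pi)^{-2k}\bigl(\zeta_H(2k+1,1-\theta)-\zeta_H(2k+1,\theta)\bigr)$. The differentiation has produced an \emph{antisymmetric} combination, so for odd $\chi$, where $\chi(-1)=-1$, the reflection $a\mapsto q-a$ now makes the two pieces \emph{add}, yielding $\sum_a\chi(a)\,\partial_\theta\zeta_{\RR/\ZZ}(k,a/q)=-4k(2\pi)^{-2k}q^{2k+1}L(2k+1,\chi)$. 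Feeding this into \eqref{Eq:Trig:odd} and absorbing its prefactor $-\tfrac{2^{2p}}{2\pi p}$ reproduces exactly the coefficient $\tfrac{2k}{p}$ of the claimed formula: the factor $2k$ is the one created by the differentiation shifting $\zeta_H(2k,\cdot)$ to $\zeta_H(2k+1,\cdot)$, while the $1/p$ is the normalization already present in \eqref{Eq:Trig:odd}.

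The only genuine difficulty is bookkeeping: one must track the powers of $2\pi$ and $q$ faithfully through the spectral-zeta/Hurwitz/$L$ dictionary, and---more conceptually---pair the symmetrization $\theta\leftrightarrow 1-\theta$ correctly with the parity of $\chi$, which is precisely what selects the even values $L(2k,\chi)$ in the first formula and the odd values $L(2k+1,\chi)$ in the second. Granting the relation $\zeta_{\RR/\ZZ}(k,\theta)=(2\pi)^{-2k}(\zeta_H(2k,\theta)+\zeta_H(2k,1-\theta))$ and its $\theta$-derivative from section~\ref{sec:Hurwitz}, the remainder is just a termwise comparison of polynomials in $n$ of degree $2p$, respectively $2p+1$. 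As a sanity check---and an alternative proof in the spirit of the paper---one may instead run the asymptotic-to-exact mechanism directly at $s=p$: the even expansion (Theorem~\ref{Asymptotic for even}) and Theorem~\ref{Asymptotic for L tilde} both terminate there, since for even (resp. odd) $\chi$ the factor $L(\,\cdot\,,\chi)$ vanishes at the negative even (resp. odd) integers, and the polynomiality of $n\mapsto L_n(p,\chi)$ promotes the truncated expansion to an exact identity for every $n\ge1$.
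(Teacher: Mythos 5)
Your proof is correct and follows essentially the same route as the paper: the paper's own argument is precisely to sum the identities of Theorem~\ref{Theorem:trig} against $\chi$ over the residues $r/q$ and invoke the standard Hurwitz-to-$L$-function dictionary $\sum_a\chi(a)\zeta(s,a/q)=q^sL(s,\chi)$, with the parity of $\chi$ selecting the symmetric or antisymmetric combination. Your closing ``sanity check'' (polynomiality of $n\mapsto L_n(p,\chi)$ forcing the asymptotic expansions to terminate and become exact) is also the paper's other stated proof, so both of your routes match the paper's.
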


In the case $n=1$ these formulas were found by Xie, Zhao, and Zhao in \cite{XZZ24}. They also displayed the corresponding formulas expressing the special values of the Dirichlet $L$-functions in terms of the discrete ones.

Special values at negative integers \( -m < 0 \) of discrete \( L \) functions satisfy 
\[ L_n(-m,\chi) = 0, \]
whenever \( n > m \) (see the proof of Corollary \ref{Polynomial behaviour}). These values should not be dismissed as trivial. For example, when \( \chi \) is real, even and non-principal the condition \( L_n(-m,\chi) \ge 0 \) for all even \( n \ge 2 \) and for any integers \( m > 0 \) implies the absence of real zeros in the critical strip of the corresponding Dirichlet \( L \)-function.

The case \( m = 0 \) for odd characters in Corollary \ref{Corollaryspecial} has a special interest from the Dirichlet class number formula and reads
\[ L(1,\chi) = \frac{\pi}{2}\frac{1}{qn}\sum_{j = 1}^{qn-1}\chi(j)\cot\left(\frac{\pi j}{qn}\right), \]
for all \( n \ge 1 \). By trigonometric manipulations, this formula reduces to the classical cotangent formula (the case \( n=1 \)) for \( L(1,\chi) \). 

\subsection{Zeros of Dirichlet $L$-functions.}
The asymptotics in Theorem~\ref{Asymptotic for L tilde} that was used to derive identities among special values, provide, at the same time, information about zeros. We begin by:

\begin{corollary} \label{Corollary about Siegel zeros for odd Dirichlet characters}
Let $\chi$ be a real, odd Dirichlet character. If, for each real $0 < s < 1$, there is a subsequence of $n\rightarrow \infty$ such that $L_n(s,\chi)\geq 0$, then the associated Dirichlet $L$-function $L(s,\chi)$ has no Siegel zero.
\end{corollary}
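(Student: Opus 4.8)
The plan is to argue by contraposition. Suppose $L(s,\chi)$ has a Siegel zero, that is, a real zero $\beta\in(\tfrac12,1)$ close to $1$. Since $\chi$ is real and odd, the completed $L$-function satisfies $\Lambda(s,\chi)=\Lambda(1-s,\chi)$, so the reflected point $1-\beta\in(0,\tfrac12)$ is also a real zero, while the Euler product gives $L(\sigma,\chi)>0$ for every real $\sigma\ge1$. In particular each leading coefficient $L(1+2s,\chi)$ occurring in Theorem~\ref{Asymptotic for L tilde} for $0<s<1$ is strictly positive, since then $1+2s\in(1,3)$. The goal is to exhibit a single $s_0\in(0,1)$ for which the subsequential nonnegativity required by the hypothesis fails, i.e. $L_n(s_0,\chi)<0$ for all large $n$.

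The only way the exceptional zero can enter the expansion of Theorem~\ref{Asymptotic for L tilde} for $s\in(0,1)$ is through a correction term. Taking $m=1$ at a probe $s_0$ near $\tfrac{1+\beta}{2}\in(\tfrac34,1)$ gives
\[
L_n(s_0,\chi)=4\,L(1+2s_0,\chi)\Big(\tfrac{qn}{2\pi}\Big)^{1+2s_0}+2\,b_1(s_0)\,L(2s_0-1,\chi)\Big(\tfrac{qn}{2\pi}\Big)^{2s_0-1}+\mathcal{O}\big(n^{2s_0-3}\big),
\]
where the argument $2s_0-1$ sweeps across $\beta$ as $s_0$ varies, so that $L(2s_0-1,\chi)$ passes through the Siegel zero and (for a simple zero) changes sign. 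Since $b_1(s)=(s-1)/6<0$ on $(0,1)$, the $k=1$ correction has sign opposite to that of $L(2s_0-1,\chi)$; choosing $s_0$ just above $\tfrac{1+\beta}{2}$, where $L(2s_0-1,\chi)>0$, makes this correction negative. I would like to read off from this that $L_n(s_0,\chi)<0$ for the relevant $n$, contradicting the hypothesis.

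The crux, and the step I expect to be the main obstacle, is that for every fixed $s_0\in(0,1)$ the exponent $1+2s_0$ of the positive leading term strictly exceeds the exponent $2s_0-1$ of the term carrying the critical-strip value, so as $n\to\infty$ the leading term dominates and in fact $L_n(s_0,\chi)>0$ eventually. Thus a direct sign comparison on $(0,1)$ can never by itself contradict the hypothesis; the Siegel zero enters only at subleading order. This is exactly what distinguishes the stated range from the easier probe $s\in(-\tfrac12,0)$, where $1+2s$ already lies in the critical strip and the exceptional zero sits at the top order. To close this gap I would seek an additional input that promotes the critical-strip value from a subleading to a sign-determining role: either the asymptotic-to-exact mechanism of the paper, turning the truncated expansion at a suitable $s_0$ into an exact finite-$n$ identity so that the correction term is decisive rather than swamped by the error, or a functional equation for $L_n$ itself that reflects the probe onto the self-dual window and carries $L(\beta,\chi)$ to leading order. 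Establishing such a bridge uniformly in $n$, and extending the analysis to exclude a higher-order (non-simple) exceptional zero — where $L(2s_0-1,\chi)$ need not change sign across $\beta$ — is where the genuine difficulty lies and is the part I expect to require the most care.
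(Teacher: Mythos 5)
Your diagnosis is correct, but you have not proved the statement: the proposal stalls precisely at the point where the paper's proof proceeds, and the extra machinery you reach for is unnecessary. Your key observation --- that for a probe $s_0\in(0,1)$ the term $4L(1+2s_0,\chi)\left(\frac{qn}{2\pi}\right)^{1+2s_0}$ is positive and dominant, so $L_n(s_0,\chi)>0$ for all large $n$ and the subsequential hypothesis can never fail --- is right, and it matches the fact that the paper's own proof is actually carried out at the shifted argument $\frac{s-1}{2}\in(-\frac12,0)$: this is visible there in the coefficient $(s-3)/6=2\,b_1\!\left(\frac{s-1}{2}\right)$, and it is consistent with the normalization $\xi_n(s,\chi)=2^sn^{-s}(\pi/q)^{s/2}\Gamma\!\left(\frac{s+1}{2}\right)L_n\!\left(\frac{s-1}{2},\chi\right)$. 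So the hypothesis must be read at the shifted probe --- exactly the window you yourself identify as the ``easier'' one --- but having found it, you abandon it in favor of speculative bridges (the asymptotic-to-exact mechanism, a functional equation for $L_n$ itself), neither of which is needed, and your earlier claim that the exceptional zero ``can only enter through a correction term'' is false once the probe is shifted: there the zero sits in the leading coefficient, which is the whole point.

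Carried out, the argument is short. For $0<s<1$, Theorem~\ref{Asymptotic for L tilde} with $m=1$ at the shifted argument gives
\[
L_n\!\left(\tfrac{s-1}{2},\chi\right)=2\left(\tfrac{qn}{2\pi}\right)^{s}\left[2L(s,\chi)+b_1\!\left(\tfrac{s-1}{2}\right)L(s-2,\chi)\left(\tfrac{qn}{2\pi}\right)^{-2}+\mathcal{O}(n^{-4})\right],
\]
with $b_1\!\left(\frac{s-1}{2}\right)=\frac{s-3}{12}<0$. For odd real primitive $\chi$ one has $w_\chi=1$, and the functional equation together with $\Gamma\!\left(\frac{\sigma+1}{2}\right)<0$ for $\sigma\in(-2,-1)$ shows $L(s-2,\chi)<0$; hence the $n^{-2}$ term is \emph{positive} (the sign opposite to the even case, as the paper notes). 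Consequently, if $L(s,\chi)<0$ then $L_n\!\left(\frac{s-1}{2},\chi\right)<0$ for all large $n$, contradicting the hypothesis; so $L(s,\chi)\ge 0$ throughout $(0,1)$ --- only weak nonnegativity, since the positive second term blocks the strict conclusion that is available for even characters. The final step, which you flag as the residual difficulty, is classical and is exactly what the paper invokes: Siegel zeros of real characters are known to be \emph{simple}, and a simple real zero $\beta$ would force a sign change of $L(\cdot,\chi)$ at $\beta$ (note $L(\sigma,\chi)>0$ for $\sigma$ just below $1$), contradicting $L\ge 0$ on $(0,1)$. So no Siegel zero exists, and no separate analysis of higher-order exceptional zeros is required. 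Your alternative idea of sweeping $L(2s_0-1,\chi)$ across $\beta$ with $s_0$ near $\frac{1+\beta}{2}$ is, as you concluded yourself, a dead end, since that value only enters at subleading order in the unshifted window.
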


Note that in the asymptotics above, for $s$ in the critical strip, the values of the Dirichlet $L$-function involved are all outside the critical strip. So in some sense, the existence of Siegel zeros is linked to certain values of $L$ outside of the critical strip.

Not only is there a connection via the sign, or the vanishing, of $L_n(s,\chi)$, to zeros of $L(s,\chi)$, but there is a more surprising connection. Indeed, the Riemann Hypothesis (RH) in question is equivalent to graph zeta- or $L$-function asymptotic functional equations. It has been established in the following cases:
\begin{itemize}
    \item $\zeta(s)$ in \cite{FK17},
    \item $L(s,\chi)$ for primitive even Dirichlet character $\chi$ in \cite{F16}
    \item the Dedekind zeta function of the field of Gaussian rationals in \cite{MV22} (requiring a $9$-point Laplacian).
\end{itemize}

In each case, the form and algebra of the proofs differ. In all cases, it relies on nontrivial monotonicity results of Matiyasevich-Saidak-Zvengrowski \cite{MSZ14}. To this list we can now add the odd character version, which completes the picture in the $1$-dimensional case. Let
\[
\xi_n(s,\chi) = 2^sn^{-s} \left( \frac{\pi}{q} \right)^{\frac{s}{2}} \Gamma\left(\frac{s+1}{2}\right) L_n\left(\frac{s-1}{2},\chi\right).
\] 
Note that apart from the $2^s n^{-s}$, these fudge factors are what is needed to complete the corresponding Dirichlet $L$-function to have its functional equation in the symmetric form:
\[ \xi(s,\chi) = w_\chi\xi(1-s,\overline{\chi}). \]
The factor $w_{\chi}$ involves a Gauss sum, and has modulus $1$ if $\chi$ is primitive, see section \ref{sec:GRH} below.
We obtain:

\begin{theorem}\label{GRH for odd characters}
Let $\chi$ be a primitive, odd Dirichlet character of conductor $q > 1$. The following two statements are equivalent:
\begin{enumerate}[label=(\roman*)]
\item All zeros of the Dirichlet $L$-function $L(s,\chi)$ in the strip $0 < \Re s < 1$ lie on the critical line $\Re s = \tfrac{1}{2}$.
\item For all $s$ in the critical strip $0 < \Re s < 1$,
\[
\lim_{n \to \infty} \left| \frac{\xi_n(s,\chi)}{\xi_n(1-s,\overline{\chi})} \right| = 1.
\]

\end{enumerate}
\end{theorem}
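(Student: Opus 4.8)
The plan is to establish the equivalence by analyzing the asymptotic behavior of the ratio $\xi_n(s,\chi)/\xi_n(1-s,\overline\chi)$ using Theorem~\ref{Asymptotic for L tilde}, and showing that the limiting modulus equals $1$ precisely when the completed $L$-function satisfies $|\xi(s,\chi)/\xi(1-s,\overline\chi)|=1$, which in turn is equivalent to the Riemann Hypothesis for $L(s,\chi)$. First I would unwind the definition of $\xi_n$ in terms of $L_n(\tfrac{s-1}{2},\chi)$ and feed this into the leading-order term of the odd-character asymptotics. Taking $m=0$ in Theorem~\ref{Asymptotic for L tilde}, with the substitution $s\mapsto \tfrac{s-1}{2}$, gives that $L_n(\tfrac{s-1}{2},\chi)$ is asymptotic to $2\,b_0\!\left(\tfrac{s-1}{2}\right)L(s,\chi)\,(qn/2\pi)^{s}$ up to lower-order corrections, since $1+2(\tfrac{s-1}{2}-0)=s$. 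Using $b_0\equiv 2$, the dominant term is a clean constant multiple of $L(s,\chi)\,(qn/2\pi)^{s}$.

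Next I would assemble the full ratio. Multiplying by the fudge factors $2^s n^{-s}(\pi/q)^{s/2}\Gamma(\tfrac{s+1}{2})$ and forming the quotient $\xi_n(s,\chi)/\xi_n(1-s,\overline\chi)$, the leading asymptotics produce, up to a factor that converges to a nonzero constant, the quantity $\xi(s,\chi)/\xi(1-s,\overline\chi)$ built from the genuine completed $L$-functions. The point of the specific normalization $2^s n^{-s}(\pi/q)^{s/2}\Gamma(\tfrac{s+1}{2})$ is exactly that these are the archimedean and conductor factors completing $L(s,\chi)$ to $\xi(s,\chi)$, so after cancellation of the $n$-dependent powers the residual $n$-dependence is subleading and the limit of the modulus of the ratio equals $|\xi(s,\chi)/\xi(1-s,\overline\chi)|$. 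I would track the powers of $n$ carefully: the $(qn/2\pi)^s$ from the numerator and $(qn/2\pi)^{1-s}$ from the denominator combine with $n^{-s}$ and $n^{-(1-s)}$ from the $2^s n^{-s}$ prefactors, and I would verify these cancel so that no divergent or vanishing power of $n$ survives in the modulus.

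Having reduced statement (ii) to the assertion that $|\xi(s,\chi)/\xi(1-s,\overline\chi)|=1$ for all $s$ in the critical strip, I would invoke the functional equation $\xi(s,\chi)=w_\chi\,\xi(1-s,\overline\chi)$ with $|w_\chi|=1$ (since $\chi$ is primitive). This functional equation shows that $\xi(s,\chi)$ and $\xi(1-s,\overline\chi)$ have the same modulus \emph{as functions}, so the condition $|\xi(s,\chi)/\xi(1-s,\overline\chi)|=1$ is automatic \emph{except} at zeros, where the ratio may be $0$ or $\infty$. The content of the equivalence therefore lies in the location of zeros: the reflected pair $\xi(s,\chi)$ and $\xi(1-s,\overline\chi)$ have zeros at reflected points $\rho$ and $1-\rho$, and the modulus of the ratio fails to be $1$ exactly when a zero lies off the critical line, breaking the symmetry $s\leftrightarrow 1-s$. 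When all zeros lie on $\Re s=\tfrac12$, each zero is its own reflection and the ratio has modulus $1$ throughout; conversely an off-line zero at $\rho$ with $\Re\rho\neq\tfrac12$ forces the ratio to vanish or blow up at $s=\rho$, violating (ii).

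The main obstacle I anticipate is twofold. First, the asymptotic expansion only controls the leading behavior, so I must confirm that the subleading terms genuinely do not contribute to the limit of the modulus; this requires checking that the error term $\mathcal O(n^{1+2(s-1)})$ in the $m=0$ expansion is of strictly smaller order than the main term uniformly in the relevant range of $s$, and that the main term $L(s,\chi)$ does not itself vanish in a way that lets the error dominate. This is precisely where I expect to need the monotonicity results of Matiyasevich-Saidak-Zvengrowski \cite{MSZ14}, as in the even-character case \cite{F16} and the zeta case \cite{FK17}: to handle the delicate behavior near zeros on the critical line, where the leading term degenerates, one needs quantitative control of how $|\xi(s,\chi)|$ and $|\xi(1-s,\overline\chi)|$ compare. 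Second, I would take care with the analytic continuation: the asymptotics are stated for fixed $s\in\CC$, but the equivalence concerns all $s$ in the strip, so I must ensure the limit $n\to\infty$ can be taken pointwise across the strip and that the argument near potential zeros—where the ratio is genuinely sensitive—is handled by the monotonicity input rather than by the leading asymptotics alone.
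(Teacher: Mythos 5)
Your proposal breaks down at its central step. You reduce condition (ii) to the assertion $\left|\xi(s,\chi)/\xi(1-s,\overline{\chi})\right| = 1$ on the strip, and then claim that an off-line zero $\rho$ ``forces the ratio to vanish or blow up at $s=\rho$.'' But the functional equation $\xi(s,\chi) = w_\chi\,\xi(1-s,\overline{\chi})$ holds \emph{identically} in $s$, so the two functions $\xi(s,\chi)$ and $\xi(1-s,\overline{\chi})$ have exactly the same zeros, and their ratio is identically the constant $w_\chi$ (of modulus $1$, since $\chi$ is primitive), with removable singularities at the common zeros. Hence the condition you reduced (ii) to is true unconditionally, regardless of where the zeros lie, and cannot be equivalent to GRH; in particular the ratio never vanishes or blows up at an off-line zero, because numerator and denominator degenerate to $0/0$ simultaneously. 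This is not a technicality to be patched by error estimates: the entire content of the theorem lives precisely at the zeros of $\xi(s,\chi)$, which your reduction throws away.

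This also shows why the $m=0$ expansion is insufficient and why your intended use of \cite{MSZ14} is misdirected. The paper's proof uses the $m=1$ asymptotics \eqref{Asymptotic of size 2 for L tilde}, which give
\begin{equation*}
\xi_n(s,\chi) = 4\,\xi(s,\chi) + \beta(s,\chi)\,n^{-2} + \mathcal{O}(n^{-4}),
\qquad
\beta(s,\chi) = \frac{\pi}{q}\,\frac{(s-1)(s-3)}{6}\,\xi(s-2,\chi).
\end{equation*}
Away from zeros the limit in (ii) is $|w_\chi| = 1$ automatically, exactly as you observe; but \emph{at} a zero $s$ of $\xi(\cdot,\chi)$ (equivalently of $\xi(1-\cdot,\overline{\chi})$), the leading terms of both $\xi_n(s,\chi)$ and $\xi_n(1-s,\overline{\chi})$ vanish and the limit becomes $\left|\beta(s,\chi)/\beta(1-s,\overline{\chi})\right|$, which is well defined and nonzero because $\Re(s-2) < 0$ puts $\xi(s-2,\chi)$ outside the zero region. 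The monotonicity result of Matiyasevich--Saidak--Zvengrowski is then the decisive input, not an error-control device: $\sigma \mapsto |\xi(\sigma+it-2,\chi)|$ is strictly decreasing on $0<\sigma<1$, so $|\beta(s,\chi)| = |\beta(1-s,\overline{\chi})|$ can hold for at most one $\sigma$ at each fixed $t$, and it does hold at $\sigma = \tfrac12$ since there $\beta(1-s,\overline{\chi}) = \beta(\overline{s},\overline{\chi}) = \overline{\beta(s,\chi)}$. This gives both directions at once: a zero on the critical line yields limit $1$, a zero off the line yields limit $\neq 1$. Your proof as written establishes neither implication.
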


In view of the standard factorization of the above-mentioned Dedekind zeta function, there are thus two graph reformulations of its RH: via cyclic graphs and via two-dimensional discrete tori.

Possible real zeros in the critical strip is a very important issue with numerous number theoretic consequences. Here is a variant of the GRH-reformulation restricted to real zeros:

\begin{corollary}\label{GRH in the real line for odd characters}
Let $\chi$ be a primitive, odd, real Dirichlet character of conductor $q > 1$. Take $0< s <1$, $s \neq 1/2$. Then $L(s,\chi)\neq 0$ is equivalent to
\[
\lim_{n \to \infty} \frac{\xi_n(s,\chi)}{\xi_n(1-s,\overline{\chi})} = 1.
\]
\end{corollary}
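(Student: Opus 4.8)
The plan is to push the Sidi–Euler–Maclaurin expansion of Theorem~\ref{Asymptotic for L tilde} through the definition of $\xi_n$ and then separate the two directions of the equivalence according to whether $L(s,\chi)$ vanishes. First I would record the two simplifications that are special to the real odd primitive case: since $\chi$ is real we have $\overline{\chi}=\chi$, its Gauss sum is $i\sqrt q$, so the root number is $w_\chi=1$ and the functional equation reads $\Lambda(s,\chi)=\Lambda(1-s,\chi)$; moreover, for real $s$ the number $\xi_n(s,\chi)$ is real, so the ratio in question is a real sequence. Feeding the leading ($k=0$) term of Theorem~\ref{Asymptotic for L tilde}, with $b_0=2$, into $\xi_n(s,\chi)=2^s n^{-s}(\pi/q)^{s/2}\Gamma(\tfrac{s+1}{2})L_n(\tfrac{s-1}{2},\chi)$ and noting that $1+2\cdot\tfrac{s-1}{2}=s$, the factor $n^{-s}$ exactly cancels the $n^{s}$ from $L_n$, while the $k\ge 1$ terms and the error are $O(n^{-2})$ after this normalization. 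This yields the clean convergence $\xi_n(s,\chi)\to C\,\Lambda(s,\chi)$ with $C=4\sqrt{\pi/q}>0$ independent of $s$, and likewise at $1-s$. The easy direction is then immediate: if $L(s,\chi)\neq 0$ then $\Lambda(s,\chi)=\Lambda(1-s,\chi)\neq 0$, both normalizations converge to the same nonzero real limit, and the ratio tends to $1$.

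For the converse I would argue contrapositively, assuming $L(s,\chi)=0$ and showing the ratio does not tend to $1$. Now $\Lambda(s,\chi)=\Lambda(1-s,\chi)=0$, so the $k=0$ contributions vanish and I pass to the $k=1$ term, governed by $b_1(\sigma)=(\sigma-1)/6$. A short computation shows that the surviving leading asymptotics are controlled by $L(s-2,\chi)$ in the numerator and by $L(-1-s,\chi)$ in the denominator, and that both are nonzero: by the functional equation they are proportional, through finite nonzero $\Gamma$- and power-factors, to $L(3-s,\chi)$ and $L(2+s,\chi)$ with $3-s,\,2+s\in(2,3)$ inside the region of absolute convergence. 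One also checks $b_1(\tfrac{s-1}{2})=\tfrac{s-3}{12}\neq 0$ and $b_1(-\tfrac{s}{2})=-\tfrac{s+2}{12}\neq 0$, so the next-order terms are genuinely dominant. Consequently $\xi_n(s,\chi)/\xi_n(1-s,\chi)\to R$ for an explicit $R$ which, after applying the functional equation, is a product of elementary and $\Gamma$-factors times $L(3-s,\chi)/L(2+s,\chi)$.

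The main obstacle is to show $R\neq 1$. The heart of the difficulty is that the two next-order arguments $s-2$ and $-1-s$ (equivalently $3-s$ and $2+s$) are interchanged by $u\mapsto 1-u$ only when $s=\tfrac12$, so for $s\neq\tfrac12$ there is no symmetry forcing these values to agree; but excluding an accidental coincidence among the remaining $\Gamma$- and power-factors requires genuine input. I expect to resolve this exactly as in the proof of Theorem~\ref{GRH for odd characters}: its off-critical-line analysis, specialized here to the real zero $s$ (imaginary part zero), produces $\lim_{n}\lvert \xi_n(s,\chi)/\xi_n(1-s,\chi)\rvert\neq 1$ through the monotonicity estimates of Matiyasevich–Saidak–Zvengrowski, and a fortiori the real ratio cannot converge to $1$. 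Combining this with the easy direction gives the asserted equivalence.
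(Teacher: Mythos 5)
Your proposal is correct and takes essentially the same route as the paper: the leading term of the Sidi expansion together with $w_\chi=1$ settles the case $L(s,\chi)\neq 0$, and when $L(s,\chi)=0$ the limit becomes the ratio of the $n^{-2}$ coefficients, which the Matiyasevich--Saidak--Zvengrowski monotonicity (specialized to $t=0$, exactly as in the proof of Theorem~\ref{GRH for odd characters}) shows differs from $1$ for $s\neq 1/2$. The paper phrases this last step as strict monotonicity of the real function $\beta(\cdot,\chi)$ on $(0,1)$, so that $\beta(s,\chi)=\beta(1-s,\chi)$ forces $s=1/2$, which is the same argument as your $\lvert R\rvert\neq 1$ conclusion.
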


We tried to use the discrete approach to rule out real zeros, but could only establish:
\begin{theorem}\label{thm:positivemean2}
    Let \( \chi \) be a Dirichlet character, primitive of conductor \( q > 1 \), real, even, and with the positive mean property. Then $\chi $ is heat positive, $\tau(\chi)=\sqrt q$, and $L(s,\chi)>0$ for $0<s<1$.
\end{theorem}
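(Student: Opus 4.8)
The plan is to route every conclusion through the completed theta function attached to $\chi$, and to show that the two hypotheses force its strict positivity on the whole ray $t>0$. Since $\chi$ is even and non-principal, set
\[ \Theta_\chi(t) = \sum_{n \ge 1}\chi(n)\,e^{-\pi n^2 t/q}, \qquad t>0, \]
whose Mellin transform is the completed $L$-function
\[ \Lambda(s,\chi) \ldef (q/\pi)^{s/2}\Gamma(s/2)\,L(s,\chi) = \int_0^{\infty}\Theta_\chi(t)\,t^{s/2}\,\frac{dt}{t}. \]
I read \emph{heat positive} as the statement $\Theta_\chi(t)>0$ for all $t>0$, i.e.\ positivity of the $\chi$-twisted heat trace on the circle, so the first task is to extract this from the positive mean property; the other two conclusions are then essentially formal.

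First I would prove heat positivity by summation by parts. Write $S(N)=\sum_{n=1}^N\chi(n)$ and $w_n=e^{-\pi n^2 t/q}$; the weights are strictly decreasing with $w_n\to0$, and the partial character sums $S(N)$ are bounded, so Abel summation gives
\[ \Theta_\chi(t)=\sum_{N\ge1}S(N)\,(w_N-w_{N+1}), \]
with no boundary contribution. The positive mean property provides $S(N)\ge0$ for every $N$; since each factor $w_N-w_{N+1}$ is positive and the $N=1$ term equals $w_1-w_2>0$ (as $S(1)=1$), the series is strictly positive. Hence $\chi$ is heat positive.

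Next I would read off the Gauss sum and the nonvanishing simultaneously from the theta reflection. Poisson summation (legitimate because $\chi$ is primitive) yields
\[ \Theta_\chi(t)=\frac{\tau(\chi)}{\sqrt q}\,t^{-1/2}\,\Theta_\chi(1/t), \]
where, $\chi$ being real and even, the same character appears on both sides and $\tau(\chi)=\pm\sqrt q$. Evaluating at the fixed point $t=1$ gives $\Theta_\chi(1)=\tfrac{\tau(\chi)}{\sqrt q}\Theta_\chi(1)$, and since $\Theta_\chi(1)>0$ the sign is forced to be $+$, so $\tau(\chi)=\sqrt q$. The same reflection shows $\Theta_\chi(t)$ decays faster than any power as $t\to0$ (and exponentially as $t\to\infty$), so the Mellin integral converges for every $s$ and represents $\Lambda(s,\chi)$ on all of $0<s<1$. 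For real $s\in(0,1)$ the integrand $\Theta_\chi(t)\,t^{s/2-1}$ is strictly positive, whence $\Lambda(s,\chi)>0$; dividing by the positive factor $(q/\pi)^{s/2}\Gamma(s/2)$ gives $L(s,\chi)>0$ on $(0,1)$.

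The crux, and the only place where the hypotheses do real work, is the first implication, converting the positive mean property into heat positivity. If \emph{positive mean} means literally that every partial sum $S(N)$ is nonnegative, the monotonicity of the Gaussian weights makes the Abel-summation step above immediate. If instead it is a Cesàro- or weight-averaged mean, the same rearrangement idea should survive, but one must verify that the averaging is compatible with the decreasing weights $w_n$ so that summation by parts still preserves positivity; this compatibility is where I would expect the genuine content to sit. A secondary point demanding care is the exact normalization of the theta functional equation, since the identification $\tau(\chi)=\sqrt q$ rests on the reflection constant being precisely $\tau(\chi)/\sqrt q$.
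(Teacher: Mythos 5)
There is a genuine gap, and it sits exactly where you suspected it might. Your Abel summation rests on the claim that the positive mean property gives $S(N)=\sum_{n\le N}\chi(n)\ge 0$ for \emph{every} $N$. This is false, and not merely because the paper's definition only requires it for $0\le m\le q/2$: for an even, non-principal character it is \emph{impossible}. Since $\sum_{j=0}^{q-1}\chi(j)=0$ and $\chi(q-k)=\chi(-k)=\chi(k)$, one has $S(q-1-m)=-S(m)$ for all $m$; in particular $S(q-2)=-S(1)=-1<0$, and by $q$-periodicity of $S$ this negative value recurs infinitely often. So your unfolded series $\sum_{N}S(N)\,(w_N-w_{N+1})$ has infinitely many strictly negative terms, and strict positivity of $\Theta_\chi$ does not follow. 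The repair is not a matter of ``compatibility of averaging'': one must first \emph{fold} the sum using the evenness of $\chi$, pairing the residues $r$ and $q-r$, which replaces the Gaussian weights by values of the \emph{periodized} Gaussian (the heat kernel on a circle of circumference $q$) at the points $r=0,1,\dots,\lfloor q/2\rfloor$, and then Abel-sum only over this half-range, where the hypothesis $S(m)\ge 0$ actually applies. That in turn requires a nontrivial monotonicity lemma: the periodized kernel must be decreasing in the distance from the origin on half the circle. Note that the naively folded weights $e^{-\pi(lq+r)^2t/q}+e^{-\pi((l+1)q-r)^2t/q}$ are \emph{not} monotone in $r$ for all $t$ term by term, so the periodization is essential. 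This folded-Abel-plus-monotonicity scheme is precisely the paper's proof, carried out for the discrete heat kernel $K_n$ on $\ZZ/qn\ZZ$: the identity \eqref{Two expressions of the trace}, Abel summation over $0\le j\le \lfloor q/2\rfloor$, and the monotonicity Lemma \ref{lem:decreasing}.

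A second problem is definitional: ``heat positive'' in this paper does not mean positivity of the classical theta function, but nonnegativity of the discrete spectral sums $\sum_{j=1}^{qn-1}\chi(j)e^{-4t\sin^2(\pi j/qn)}$ for all $t\ge 0$ along a sequence $n_i\to\infty$, see \eqref{eq:heatpositive}. So even a corrected theta-function argument would not by itself prove the first assertion of the theorem; you would still need to run the argument on $\ZZ/qn\ZZ$, or transfer positivity from the continuous to the discrete kernel, which is not automatic. Your remaining deductions are sound \emph{conditionally on} theta positivity: $\tau(\chi)=\sqrt q$ from evaluating the reflection formula at $t=1$, and $L(s,\chi)>0$ from the positive Mellin integrand — this is the classical route. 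The paper instead obtains $L(s,\chi)>0$ from the discrete asymptotics via Corollary \ref{cor:real zeros}, and pins down the sign of $\tau(\chi)$ by combining the positivity of the right-hand side of \eqref{Two expressions of the trace} with the direct positivity ranges of Proposition \ref{thm:heatpositive}.
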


See section \ref{sec:realseros} for terminology. For these characters, the positivity of the Dirichlet $L$-function is already well-known from Chowla's clever and simple proof in \cite{Ch36} (corresponding to the case $m(\chi)=2$). We still stick to the notion that the discrete viewpoint, with one or another approach, could lead to new results on real zeros.

\subsection{Further context.}
From one particular spectral viewpoint, Riemann's zeta function $\zeta(s)$ and Dirichlet's $L$-functions are associated with the circle $\mathbb{R}/\mathbb{Z}$. Selberg's zeta function \cite{Se56} is similarly associated with surfaces $\mathbb{H}^2/\Gamma$. Here we study the zero-dimensional case $\mathbb{Z}/n\mathbb{Z}$. Ideas and speculations on how zeta functions in various dimensions interact can be found in \cite{JL11}.

It should be said that the closest graph analogue of Selberg's zeta function is instead the Ihara zeta functions, for example since they both are Laplace transforms of heat kernels, see \cite{CJK15}. The Ihara zeta function for general graphs was suggested by Serre in the 1970s \cite[p. IX]{S02} after reading Ihara's work. It was developed by Sunada and others into an important chapter of spectral graph theory \cite{Te10}. These zetas (and the $L$-functions of Stark-Terras \cite{ST96}) have analogues with number theoretic zeta functions. In contrast, the functions associated to graph spectra discussed here have more direct connections. Selberg's zeta function, with its Euler product and trace formula, has features analogous to classical number theory. But in addition, for the modular surface, the Riemann zeta function actually appears in the functional equation. 

 One fundamental graph spectral zeta function is the one associated to the discrete line $\mathbb{Z}$:
 $$
 \zeta_{\mathbb Z}(s) = \frac{1}{\Gamma(s)}\int_0^{\infty} e^{-2t}I_0(2t)t^s \frac{dt}{t}=\frac{1}{2^{2s}\pi^{1/2}} \frac{\Gamma(1/2-s)}{\Gamma(1-s)} =\binom{-2s}{-s}
 =\prod_{k=1}^{\infty}\frac{(1-s/k)^2}{(1-2s/k)},
 $$
see \cite{FK17,KP23}. It has its share of analogues, for example a functional equation of the usual form $\xi_{\mathbb Z}(1-s)=\xi_{\mathbb Z}(s)$. Moreover, special values of $\zeta_{\mathbb Z}$ can be used to give the formula of the volumes of the round spheres in a form similar to the Minkowski-Siegel volume formulas for spaces like $\mathrm{SL}_n(\mathbb{R}) /\mathrm{SL}_n(\mathbb{Z})$ \cite{KP23}. But $\zeta_{\mathbb Z}(s)$ also actually appears in the functional equation of Eisenstein series \cite[(3.7),(3.9)]{Se56}, and of Selberg's zeta function \cite[p. 79]{Se56} and \cite[§10]{Se54}. Thus, in the case of the modular surface, both $\zeta(s)$ and $\zeta_{\mathbb Z}(s)$ appear in the fudge factors for the Selberg zeta function (\emph{cf.} \cite[p. 300]{JL11}). This may not be surprising since both $\mathbb{R}/ \mathbb{Z}$ and $\mathbb{Z}$ can be thought related to the cusp. 
 The graph zeta function $\zeta_{\mathbb Z}(s)$ is also present in the Gindikin-Karpelevich formulas \cite{K20} and in the significant scaling-limit context of \cite{ABS25}.
 
 Moreover, as Sarnak pointed out to us, it is suggestive that the special values at $s=-m$ are exactly the integers exploited by Chebyshev in his work on primes. Thus the graph special values,  $\zeta_{\mathbb Z}(-m)$ and $L_n(-m,\chi) $, all have implications for the distribution of primes.

\paragraph*{Acknowledgments.
}
This work was supported by the Swiss NSF Grants 200020-200400, 200021-212864 and the Swedish Research Council Grant 104651320. The authors would like to thank the Isaac Newton Institute for Mathematical Sciences, Cambridge, for support and hospitality during the program Operators, Graphs, Groups, where work on this paper was undertaken. This work was supported by EPSRC grant no EP/Z000580/1.

%%%%%%%%%%%%%%%%%%%%%%%%%%%%%%%%%%%%%%%%%%%%%%%%%%%%%%%%%%%%%%%%%%%%%%%%%

\section{Bundle Laplacian and asymptotics}
In this section we define a bundle Laplacian on the discrete circle \( \ZZ/n\ZZ \) and its continuous counterpart \( \RR/\ZZ \). We then establish an asymptotic expansion for the spectral zeta function involved as the size of the discrete circle \( n \to \infty \).

\subsection{Hurwitz zeta functions} \label{sec:Hurwitz}

The \term{Hurwitz zeta function} is defined by the series 
\[ \zeta(s,\theta) = \sum_{n \ge 0}(n+\theta)^{-s} \]
for \( 0 < \theta < 1 \) and \( \Re(s) > 1 \). It can be analytically continued to the whole complex plane, with a simple pole at \( s = 1 \) of residue \( 1 \). The special values of the Hurwitz zeta function at negative integers satisfy 
\[ \zeta(-n,\theta) = -\frac{B_{n+1}(\theta)}{n+1}, \]
for \( n \ge 0 \), where \( B_{n}(x) \) are the Bernoulli polynomials defined by the generating series 
\[ \frac{te^{xt}}{e^t-1} = \sum_{n \ge 0}B_n(x) \frac{t^n}{n!}. \]
From this, we observe that 
\begin{equation}\label{Bernoulli relations}
B_n(1-x) = (-1)^nB_n(x).
\end{equation} 
For positive integers \( n \), the formulas are known to be related to the polygamma functions:
\[
\zeta(n,\theta)=\frac{(-1)^n}{(n-1)!} \psi^{(n-1)}(\theta),
\]
where the digamma function $\psi(s)$ is defined by \( \psi(z)=\Gamma '(z)/\Gamma(z) \).

\subsection{Bundle Laplacian}
The notion of a \term{vector bundle} over a finite graph is well explained in \cite{Ke11}. To a finite graph \( G \) we can attach to each vertex a copy of a given vector space \( V \) together with the data of a \term{connection}, that is a map \( \phi \colon E_G \rightarrow \mathrm{GL}(V) \), where \( E_G \) is the set of edge of \( G \) and with the condition that \( \phi(\overline{e}) = (\phi(e))^{-1} \), where \( \overline{e} \) is the opposite edge of \( e \). The bundle is said to be \term{unitary} if the connection takes values in \( U(V) \). If \( V \) is one dimensional, we say that it is a \term{line bundle}. If \( \gamma \colon \{1,\ldots,n\} \rightarrow E_G \) is a closed path in \( G \), the \term{monodromy} around \( \gamma \) is 
\[ m_\phi(\gamma) \ldef \phi(\gamma(n)) \cdot \ldots \cdot\phi(\gamma(1)) \in \mathrm{GL}(V). \]
To each \( V \)-bundle over \( G \), we may associate a \term{bundle Laplacian} defined by 
\[ (\Delta_{\phi}f)(x) = \sum_{y \sim x}(f(x)-\phi_{(x,y)}f(y)) \]
on functions \( f \colon G \rightarrow V \).

\subsubsection{A unitary line bundle over the discrete circle}\label{subsec:DiscreteBundle}
Let \( C_n \) be the Cayley graph of \( \ZZ/n\ZZ \) with generator \( \{ 1 \} \). For each \( \theta \in \RR \), we define a unitary line bundle over \( C_n \) associated with the constant connection \( \phi_{\theta}((x,x+1)) = e^{2\pi i \theta/n } \). It has monodromy \( m_\theta(C_n) = e^{2\pi i \theta} \). The bundle Laplacian is therefore equal to 
\[ (\Delta_{n,\theta}f)(x) = 2f(x)-e^{2 \pi i \theta/n}f(x+1) - e^{-2\pi i \theta/n}f(x-1), \]
for functions \( f \colon C_n \rightarrow \CC \). In fact, \( \Delta_{n,\theta} = (d_{\theta})^*d_{\theta} \) for the difference operator \( d_{\theta} \colon l^2(C_n) \rightarrow l^2(E_{C_n}) \) defined by 
\[ (d_{\theta}f)(e) = e^{\pi i \theta/n}f(e^+) - e^{-\pi i \theta/n}f(e^-), \]
where \( e = (e^-,e^+) \) is the edge that starts from \( e^- \) and ends at \( e^+ \). It is diagonalized by Fourier with spectrum 
\[ \sigma(\Delta_{n,\theta}) = \left\{4\sin^2\left(\frac{\pi(j+\theta)}{n} \right) \mid j = 0,\ldots,n-1 \right\}. \]
When \( \theta \notin \ZZ \) the spectral zeta function associated with \( \Delta_{n,\theta} \) is the entire function
\begin{equation}
\zeta_{\ZZ/n\ZZ}(s,\theta) = \zeta_n(s,\theta)  \ldef \sum_{j = 0}^{n-1}\left[4\sin^2\left(\frac{\pi(j+\theta)}{n}\right)\right]^{-s}.
\end{equation}
If \( \theta \in \ZZ \) the bundle Laplacian is unitary equivalent to the standard combinatorial Laplacian. Therefore its zeta function is the spectral zeta function on \( \ZZ/n\ZZ \), 
\[ \zeta_{\ZZ/n\ZZ}(s) = \sum_{j = 1}^{n-1}\left[4\sin^2\left(\frac{\pi j}{n}\right)\right]^{-s}. \]
We will also need the derivative with respect to \( \theta \) of this spectral function: 
\begin{equation}
\partial_{\theta}\zeta_n(s,\theta) = -s\frac{2\pi}{n} \sum_{j = 0}^{n-1}\cot\left(\frac{\pi(j + \theta)}{n}\right)\left[4\sin^2\left(\frac{\pi(j + \theta)}{n}\right)\right]^{-s}.
\end{equation}

\subsubsection{The continuous circle}
In the same way, we define the bundle Laplacian on the circle associated with the unitary connection \( \mathrm{d} + 2\pi i\theta \mathrm{d}x \) where we identify the circle with \( \RR/\ZZ \). We denote it by \( \Delta_{\theta} \) and it is simply 
\[ \Delta_{\theta} = -(\mathrm{d}+2\pi i \theta)^2. \]
It is again diagonalized by Fourier and has spectrum 
\[ \sigma(\Delta_{\theta}) = \{4\pi^2(n+\theta)^2 \mid n \in \ZZ \}. \] 
For \( \theta \notin \ZZ \), the spectral zeta function associated to \( \Delta_{\theta} \) is therefore 
\[ \zeta_{\RR/\ZZ}(s,\theta) = \frac{1}{(2\pi)^{2s}}\sum_{n \in \ZZ} |n+\theta|^{-2s} = (2\pi)^{-2s}\left[\zeta(2s,\theta) + \zeta(2s,1-\theta)\right]. \] 
The relation \eqref{Bernoulli relations} implies that \( \zeta_{\RR/\ZZ}(s,\theta) \) vanishes at negative integers \( n \le 0 \). 
If \( \theta \in \ZZ \), again \( \Delta_{\theta} \) is unitary equivalent to the usual Laplacian, its spectral zeta is therefore the spectral zeta function of the circle
\[ \zeta_{\RR/\ZZ}(s) = 2(2\pi)^{-2s}\zeta(2s), \]
where \( \zeta(s) \) is the Riemann zeta function. Observe that 
\begin{equation}
\zeta_{\RR/\ZZ}(s) = \lim_{\theta \to 0} \left( \zeta_{\RR/\ZZ}(s,\theta)-(2\pi \theta)^{-2s} \right).
\end{equation}
Analogously to the discrete case, we compute its derivative with respect to \( \theta \);
\[ \partial_{\theta}\zeta_{\RR/\ZZ}(s,\theta) = -2s(2\pi)^{-2s}\bigl[\zeta(1+2s,\theta) - \zeta(1+2s,1-\theta)\bigr]. \]
Observe that for \( 0 < \theta < 1 \), this is an entire function of \( s \), with 
\[-\frac{1}{2s}\partial_{\theta}\zeta_{\RR/\ZZ}(s,\theta) = \psi(1-\theta)-\psi(\theta) + \mathcal{O}(s) = \pi\cot\pi\theta + \mathcal{O}(s). \]
\subsection{Asymptotics}
The main tool of this article is the following asymptotic expansion deduced from the Euler-MacLaurin formula adapted for singularities by Sidi in \cite{Sid12}.
\begin{theorem}\label{Asymptotic for the bundle Laplacian}
For any \( s \neq \tfrac 12, \tfrac 32, \tfrac 52 ,\ldots \), any \( 0 < \theta < 1 \), and any \( l \ge 0 \), the following asymptotics hold as \( n \to \infty \):
\begin{equation}
\zeta_{\ZZ/n\ZZ}(s,\theta)  =  n\zeta_\ZZ(s) + n^{2s}\left(\sum_{k = 0}^{l} a_k(s) \zeta_{\RR/\ZZ}(s-k,\theta)n^{-2k} + \mathcal{O}(n^{-2-2l})\right),
\end{equation} 
and without restriction on \( s \),
\begin{equation}
\partial_{\theta}\zeta_{\ZZ/n\ZZ}(s,\theta)  = n^{2s}\left(\sum_{k = 0}^{l} a_k(s) \partial_{\theta}\zeta_{\RR/\ZZ}(s-k,\theta)n^{-2k} + \mathcal{O}(n^{-2-2l})\right),
\end{equation}
where the coefficients \( a_k(s) \) are defined in the introduction.
\end{theorem}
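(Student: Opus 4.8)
The plan is to recognize $\zeta_{\ZZ/n\ZZ}(s,\theta)$ as an offset Riemann sum and to apply Sidi's singular Euler--Maclaurin expansion from \cite{Sid04,Sid12}. Writing
\[
\zeta_{\ZZ/n\ZZ}(s,\theta) = \sum_{j=0}^{n-1} h\left(\frac{j+\theta}{n}\right), \qquad h(t) = \bigl[4\sin^2(\pi t)\bigr]^{-s},
\]
we have a function that is smooth on $(0,1)$ with purely algebraic singularities at the two endpoints. The first step is to extract those singularities explicitly. Factoring $4\sin^2(\pi t) = (2\pi t)^2\bigl(\sin(\pi t)/(\pi t)\bigr)^2$ and feeding $z = 2\pi t$ into the series defining $a_k(s)$ gives the left-endpoint expansion $h(t) = \sum_{k\ge 0} a_k(s)(2\pi)^{2k-2s}\,t^{2k-2s}$ as $t\to 0^+$; since $\sin(\pi(1-t)) = \sin(\pi t)$, the mirror expansion in powers of $1-t$ holds as $t\to 1^-$. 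Thus $h$ has algebraic endpoint singularities with exponents $2k-2s$ and no logarithmic terms exactly when $2s-2k\ne 1$ for all $k$, i.e. when $s\notin\{\tfrac12,\tfrac32,\dots\}$.

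Next I would identify the bulk term: the substitution $\omega = 2\pi t$ turns $\int_0^1 h(t)\,dt$ into $\tfrac{1}{2\pi}\int_0^{2\pi}[2-2\cos\omega]^{-s}\,d\omega$, the spectral-measure integral for $\ZZ$, which is $\zeta_\ZZ(s)$; hence the leading term of the expansion is $n\int_0^1 h = n\zeta_\ZZ(s)$. I then apply the two-sided offset form of Sidi's expansion, exactly as in \cite{FK17,F16}. Because the exponents $2k-2s$ are non-integral for generic $s$, no separate smooth Bernoulli-polynomial corrections appear, and the only contributions are the integral and the endpoint singular terms: the singularity at $t=0$ contributes with offset $\theta$ and the one at $t=1$ with offset $1-\theta$ (via $j\mapsto n-1-j$), each singular power $t^{2k-2s}$ producing a correction governed by a Hurwitz zeta value. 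Collecting the order-$k$ contribution from both ends gives
\[
a_k(s)(2\pi)^{2k-2s}\,n^{2s-2k}\bigl[\zeta(2s-2k,\theta)+\zeta(2s-2k,1-\theta)\bigr],
\]
and since $\zeta_{\RR/\ZZ}(s-k,\theta) = (2\pi)^{2k-2s}\bigl[\zeta(2s-2k,\theta)+\zeta(2s-2k,1-\theta)\bigr]$, this is precisely $a_k(s)\,n^{2s}\,\zeta_{\RR/\ZZ}(s-k,\theta)\,n^{-2k}$. Summing $k=0,\dots,m$ and truncating Sidi's expansion produces the stated remainder $n^{2s}\mathcal{O}(n^{-2-2m})$. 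The excluded set $s\in\{\tfrac12,\tfrac32,\dots\}$ reappears as the resonance where $\zeta(2s-2k,\cdot)$ has its pole: there the two endpoint contributions add rather than cancel, so the expansion genuinely degenerates.

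For the second expansion I would simply differentiate the first in $\theta$. The bulk term $n\zeta_\ZZ(s)$ is $\theta$-independent and drops out, which accounts for the absence of an $n$-term, while each correction differentiates to $a_k(s)\,n^{2s}\,\partial_\theta\zeta_{\RR/\ZZ}(s-k,\theta)\,n^{-2k}$. The removal of the restriction on $s$ is then transparent: in $\partial_\theta\zeta_{\RR/\ZZ}(s-k,\theta)$ the Hurwitz values enter through the antisymmetric combination $\zeta(1+2(s-k),\theta)-\zeta(1+2(s-k),1-\theta)$, whose poles at $\theta$ and $1-\theta$ cancel, so each term is entire in $s$ for $0<\theta<1$, as already recorded in the excerpt.

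The main obstacle is the analytic bookkeeping around Sidi's theorem rather than any isolated identity. One must check that $h$ meets its hypotheses with exactly the endpoint data above, that the remainder is uniform for $\theta\in[\delta,1-\delta]$ and locally uniform in $s$ away from the half-integers, and --- most delicately --- that the expansion may be differentiated in $\theta$ term by term. I expect this last point to require either a uniformity estimate strong enough to control $\partial_\theta$ of the remainder, or a separate direct application of Sidi's expansion to $\partial_\theta\zeta_{\ZZ/n\ZZ}(s,\theta)$, whose summand $\cot(\pi t)\bigl[4\sin^2(\pi t)\bigr]^{-s}$ has endpoint singularities of exponent $2k-2s-1$ that would then be matched, via the coefficients $b_k(s)$, against $\partial_\theta\zeta_{\RR/\ZZ}$.
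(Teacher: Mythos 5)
Your treatment of the first expansion is essentially the paper's own proof: apply Sidi's Theorem 2.1 to \( f_s(z)=[4\sin^2(\pi z)]^{-s} \), read off the two-sided endpoint exponents \( 2(k-s) \) from the \( a_k(s) \) series, identify \( \int_0^1 f_s = \zeta_\ZZ(s) \) as the bulk term, and assemble the Hurwitz zeta contributions from the offsets \( \theta \) and \( 1-\theta \) into \( \zeta_{\RR/\ZZ}(s-k,\theta) \). That half is fine.

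The gap is in the second expansion. Your primary route, differentiating the first expansion term by term in \( \theta \), is exactly what the paper declines to do: Sidi's theorem gives a remainder \( \mathcal{O}(n^{-2-2m}) \) with no stated uniformity or differentiability in \( \theta \), and the paper remarks explicitly that it has no such control and therefore applies the Euler--Maclaurin--Sidi formula a second time. Moreover, differentiation is structurally incapable of delivering the claim ``without restriction on \( s \)'': the first expansion genuinely fails at \( s\in\{\tfrac12,\tfrac32,\dots\} \) (logarithmic degeneration, as you yourself note), yet the second expansion is asserted there too, so no uniformity estimate can rescue this route at those points. Your fallback --- a direct application of Sidi to \( g_s(z)=\cot(\pi z)[4\sin^2(\pi z)]^{-s} \) with exponents \( 2(k-s)-1 \) and coefficients \( b_k(s) \) --- is the paper's actual proof, but as you leave it sketched it omits the one subtlety that makes the unrestricted statement true: when \( s \) is a non-negative integer, the exponent \( -1 \) occurs at \( k=s \), and Sidi's formula then produces an exceptional endpoint term, which the paper writes as \( \tfrac{n}{2\pi}\,b_s(s)\,(\psi(1-\theta)-\psi(\theta)) \), in place of a divergent Hurwitz-zeta pair. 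One must verify that this term is precisely the \( s\to k \) limit of the generic \( k \)-th term, i.e.\ that it merges into the sum via the identity \( a_k(s)=b_k(s)\tfrac{s}{2(s-k)} \) together with \( -\tfrac{1}{2s}\partial_\theta\zeta_{\RR/\ZZ}(s,\theta)=\psi(1-\theta)-\psi(\theta)+\mathcal{O}(s) \), so that the resulting expression is entire in \( s \). Finally, in this direct route the absence of a term of order \( n \) comes from \( \int_0^1 g_s = 0 \), forced by the antisymmetry \( g_s(1-z)=-g_s(z) \), not from the \( \theta \)-independence of a bulk term; that observation also needs to be made for the argument to close.
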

\begin{remark}
    One might be tempted to deduce the second formula by differentiating the first with respect to \( \theta \). We believe that this argument can be made rigorous, although we do not have control on the rest present in the asymptotic with respect to \( \theta \). We therefore instead apply the Euler-MacLaurin-Sidi formula twice.
\end{remark}

\begin{proof}[Proof of Theorem \ref{Asymptotic for the bundle Laplacian}]
It is a direct application of Theorem 2.1 of \cite{Sid12}. Take any \( 0 < \theta < 1 \), and \( s \neq \tfrac 12, \tfrac 32, \tfrac 52, \ldots \) and consider the function \( f_s(z) = [4\sin^2(\pi z)]^{-s} \), which has the asymptotic for \( z \to 0^+ \), and by symmetry, as well for \( z \to 1^-\) :
\[ f_s(z) = \sum_{k \ge 0}a_k(s)(2\pi z)^{2(k-s)}. \]
Theorem 2.1 of \cite{Sid12} implies that 
\begin{align*}
    \sum_{j = 0}^{n-1}f_s\left(\frac{j+\theta}{n}\right) & = n\int_0^1f_s(x)\mathrm{d}x + \sum_{k = 0}^{l}a_k(s)(\zeta(2(s-k),\theta) + \zeta(2(s-k),1-\theta))\left(\frac{2\pi}{n}\right)^{2(k-s)} \\
    & + \mathcal{O}(n^{2(s-l-1)}).
\end{align*}
The first asymptotics follow by recalling that
\[ \zeta_{\ZZ}(s) = \int_0^1\left[4\sin^2(\pi x)\right]^{-s}\mathrm{d}x, \]
and rearranging the terms.
To obtain the second asymptotics, we used again Theorem 2.1 in \cite{Sid12}, but with the function \( g_s(z) \ldef \cot(\pi z)f_s(z) \), which is odd around the line \( \re z = 1/2, \) that is, \( g_s(1-z) = -g_s(z) \), and therefore has asymptotic
\[ g_s(z) = \sum_{k \ge 0}b_k(s)(2\pi z)^{2(k-s)-1}, \]
for \( z \to 0^+ \) and opposite asymptotic for \( z \to 1^- \). By the Euler-MacLaurin-Sidi formula we obtain for any \( s \in \CC \)
\begin{align*}
    \sum_{j = 0}^{n-1}g_s\left(\frac{j+\theta}{n}\right) & = n\int_0^1g_s(x)\mathrm{d}x +  \frac{n}{2 \pi}\delta_s(\psi(1-\theta)-\psi(\theta))\\
    & + \underset{k \neq s}{\sum_{0 \le k \le l}} b_k(s)(\zeta(1+2(s-k),\theta)-\zeta(1+2(s-k),1-\theta))\left(\frac{2\pi}{n}\right)^{2(k-s)-1} \\
    & + \mathcal{O}(n^{1+2(s-l-1)}),  
\end{align*} 
where \( \delta_s = 0 \) if \( s \notin \ZZ_{\ge 0} \), and \( \delta_s = b_s(s) \) otherwise. Now observe first that since \( g_s \) is odd around \( \re z = 1/2 \), the integral is identically \( 0 \). We can incorporate the term with \( \delta_s \) in the main sum and rewrite the asymptotic as 
\begin{align*}
    \sum_{j = 0}^{n-1}g_s\left(\frac{j+\theta}{n}\right) & = - \frac{n}{2\pi s}\Bigg [\sum_{k = 0}^{l}b_k(s)\frac{s}{2(s-k)}\partial_{\theta}\zeta_{\RR/\ZZ}(s-k,\theta)n^{2(s-k)} \\
    & + \mathcal{O}(n^{2(s-l-1)})\Bigg], 
\end{align*}
which is the desired asymptotic when multiplied by \( -\tfrac{2\pi s}{n} \), since 
\[ a_k(s) = b_k(s) \frac{s}{2(s-k)}. \]
\end{proof}

%%%%%%%%%%%%%%%%%%%%%%%%%%%%%%%%%%%%%%%%%%%%%%%%%%%%%%%%%%%%%%%%%%%%%%%%%
\section{Asymptotics of discrete Dirichlet $L$-functions}
\label{Section for asymptotic for odd character}
In this section we prove Theorem \ref{Asymptotic for L tilde}.
For comparison, let us recall the previous results slightly reformulated:

\begin{theorem}[Theorem 1.1 in \cite{BHS08}; Theorem 0.3 in \cite{FK17}]
\label{Asymptotic in the case d=1}
For any \( s \neq \frac 12, \frac 32, \frac 52 ,\ldots \) and any \( m \ge 0 \), the following asymptotic holds, as \( n \to \infty \),
\begin{align*}
\zeta_{\ZZ/n\ZZ}(s) & =  n\zeta_{\mathbb{Z}}(s)  + n^{2s}\left(\sum_{k = 0}^{l} a_k(s) \zeta_{\mathbb{R}/\mathbb{Z}}(s-k)n^{-2k} + \mathcal{O}(n^{-2-2l})\right). 
\end{align*} 
\end{theorem}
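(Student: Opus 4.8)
The plan is to argue exactly as in the proof of Theorem~\ref{Asymptotic for the bundle Laplacian}, applying Theorem 2.1 of \cite{Sid12} to the single function $f_s(z) = [4\sin^2(\pi z)]^{-s}$, but now in the boundary case $\theta = 0$. Here $\zeta_{\ZZ/n\ZZ}(s) = \sum_{j=1}^{n-1} f_s(j/n)$ is the \emph{punctured} trapezoidal sum: the sampling points $j/n$ are interior to $(0,1)$, while the two algebraic endpoint singularities of $f_s$ sit precisely at the omitted nodes $z = 0$ and $z = 1$. This is exactly the situation Sidi's generalized Euler--Maclaurin formula is designed for, so the structure of the previous proof carries over, with the Hurwitz corrections $\zeta(2(s-k),\theta)+\zeta(2(s-k),1-\theta)$ specializing to their $\theta\to 0$ regularized counterparts.

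Concretely, I would first record the two-sided endpoint expansion $f_s(z) = \sum_{k\ge 0} a_k(s)(2\pi z)^{2(k-s)}$ as $z\to 0^+$, together with its mirror image as $z\to 1^-$, which is legitimate since $f_s(1-z) = f_s(z)$. Feeding this into Sidi's formula gives
\[
\sum_{j=1}^{n-1} f_s\!\left(\frac{j}{n}\right) = n\int_0^1 f_s(x)\,\mathrm{d}x + \sum_{k=0}^{m} a_k(s)\,\bigl(2\zeta(2(s-k))\bigr)\left(\frac{2\pi}{n}\right)^{2(k-s)} + \mathcal{O}\!\left(n^{2(s-m-1)}\right),
\]
where the factor $2$ reflects equal contributions from the two symmetric endpoints. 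It then remains to recognise $\int_0^1 f_s = \zeta_{\ZZ}(s)$ (convergent for $\Re s < 1/2$ and continued in $s$), to assemble $2(2\pi)^{-2(s-k)}\zeta(2(s-k)) = \zeta_{\RR/\ZZ}(s-k)$, and to rewrite $(2\pi/n)^{2(k-s)} = (2\pi)^{2(k-s)}\,n^{2s}\,n^{-2k}$; after collecting powers this is precisely the claimed expansion.

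The main obstacle is justifying the singular endpoint case of Sidi's formula rather than the routine bookkeeping: unlike in Theorem~\ref{Asymptotic for the bundle Laplacian}, where the offset $\theta\in(0,1)$ keeps every node strictly away from the singularity, here the singularity lies at the omitted nodes, so one must invoke the endpoint-excluded variant and check its hypotheses on the exponents $2(k-s)$. This is also where the restriction $s\neq \tfrac12,\tfrac32,\tfrac52,\dots$ originates: at $s = k + \tfrac12$ the exponent $2(s-k)$ equals $1$, so $\zeta_{\RR/\ZZ}(s-k)=2(2\pi)^{-2(s-k)}\zeta(2(s-k))$ meets the pole of $\zeta$ and the corresponding term must be replaced by a logarithmic correction. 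As a consistency check, the same identity can be recovered from Theorem~\ref{Asymptotic for the bundle Laplacian} by letting $\theta\to 0$: the divergent $j=0$ term $f_s(\theta/n)$ removed in passing from $\zeta_{\ZZ/n\ZZ}(s,\theta)$ to $\zeta_{\ZZ/n\ZZ}(s)$ cancels exactly the singular parts $a_k(s)(2\pi\theta)^{-2(s-k)}$ of $\zeta_{\RR/\ZZ}(s-k,\theta)$ term by term, leaving the regularized values $\zeta_{\RR/\ZZ}(s-k)$. Making this limit rigorous, however, would require a version of the bundle asymptotics uniform in $\theta$, which is why the direct application to the punctured sum is preferable.
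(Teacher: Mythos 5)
Your proposal is correct and follows exactly the route the paper itself indicates for this statement: the paper does not reprove it but cites \cite{BHS08,FK17} and remarks that, since there is no control of the remainder in $\theta$, one must apply the Euler--Maclaurin--Sidi formula of \cite{Sid12} directly to the punctured sum $\sum_{j=1}^{n-1}[4\sin^2(\pi j/n)]^{-s}$ rather than let $\theta\to 0$ in Theorem~\ref{Asymptotic for the bundle Laplacian} --- which is precisely your argument, with the correct bookkeeping (factor $2$ from the symmetric endpoints, $2(2\pi)^{2(k-s)}\zeta(2(s-k)) = \zeta_{\RR/\ZZ}(s-k)$, and the half-integer exclusions coming from the pole of $\zeta$ at argument $1$). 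Your closing observation about the non-uniformity in $\theta$ being the obstruction to the limiting argument matches the paper's own remark verbatim in substance.
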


Here \( \zeta_{\mathbb{R}/\mathbb{Z}}(s) = 2(2\pi)^{-2s}\zeta(2s) \) is the spectral zeta function of the circle involving the Riemann zeta function. Moreover, the \( s \mapsto a_k(s) \) are polynomials functions of degree \( k \) and that were defined in the introduction. In fact, this theorem is the \( \theta = 0 \) version of Theorem \ref{Asymptotic for the bundle Laplacian}. However, since we do not have any control of the rest with respect to \( \theta \), one needs again to use the modified Euler-MacLaurin formula of \cite{Sid12} to obtain it. We will not bother reproving it here.

The following asymptotic expression is obtained by Friedli in \cite{F16}.
\begin{theorem}[\cite{F16}]\label{Asymptotic for even}
Let \( \chi \) be an even, non-principal Dirichlet character of modulus \( q > 1 \). Then, for any fixed \( s \in \CC \setminus \{\frac12,\frac32,\frac52,\ldots\} \) and any integer \( m \ge 0 \), the following asymptotic expansion holds as \( n \to \infty \):
\[ L_n(s,\chi) = 2 \left(\frac{2\pi}{qn}\right)^{-2s}\left[\sum_{k = 0}^{l}a_k(s) \left(\frac{2\pi}{qn}\right)^{2k}L(2(s-k),\chi) + \mathcal{O}(n^{-2-2l})\right], \]
where the \( a_k(s) \) are defined in the introduction.
\end{theorem}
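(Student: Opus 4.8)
The plan is to reduce the even discrete $L$-function to a finite $\CC$-linear combination of the bundle spectral zeta functions $\zeta_n(s,\theta)$ already handled in Theorem~\ref{Asymptotic for the bundle Laplacian}, and then read off the asymptotics residue class by residue class. First I would split the defining sum according to the residue of $j$ modulo $q$: writing $j = qm + r$ with $1 \le r \le q-1$ and $0 \le m \le n-1$, periodicity gives $\chi(j)=\chi(r)$ and the sine argument becomes $\pi(m+r/q)/n$. Hence the inner sum over $m$ is exactly $\zeta_n(s,r/q)$ with $\theta = r/q \in (0,1)$, and
\[
L_n(s,\chi) = \sum_{r=1}^{q-1}\chi(r)\,\zeta_n\!\left(s,\tfrac{r}{q}\right).
\]
The terms with $\gcd(r,q)>1$ drop out since $\chi(r)=0$, so the degenerate case $\theta\in\ZZ$ never arises.

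Next I would insert the expansion of Theorem~\ref{Asymptotic for the bundle Laplacian} for each $\theta=r/q$ and sum against $\chi(r)$. The leading term $n\zeta_\ZZ(s)$ is independent of $\theta$, so it contributes $n\zeta_\ZZ(s)\sum_{r}\chi(r)$, which vanishes precisely because $\chi$ is non-principal. What survives is
\[
L_n(s,\chi) = n^{2s}\sum_{k=0}^{m}a_k(s)\Big(\sum_{r=1}^{q-1}\chi(r)\,\zeta_{\RR/\ZZ}\!\left(s-k,\tfrac{r}{q}\right)\Big)n^{-2k} + n^{2s}\,\mathcal{O}(n^{-2-2m}),
\]
where the finitely many remainders (one per residue) are absorbed into a single $\mathcal{O}$.

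The remaining step is to identify the inner character sum with $L(2(s-k),\chi)$. Using $\zeta_{\RR/\ZZ}(w,\theta)=(2\pi)^{-2w}[\zeta(2w,\theta)+\zeta(2w,1-\theta)]$ and the substitution $r\mapsto q-r$ together with evenness $\chi(q-r)=\chi(-1)\chi(r)=\chi(r)$, the two Hurwitz pieces coincide and produce a factor $2$. The classical decomposition $L(\sigma,\chi)=q^{-\sigma}\sum_{r=1}^{q-1}\chi(r)\zeta(\sigma,r/q)$ at $\sigma=2(s-k)$ then yields
\[
\sum_{r=1}^{q-1}\chi(r)\,\zeta_{\RR/\ZZ}\!\left(s-k,\tfrac{r}{q}\right) = 2\Big(\frac{2\pi}{q}\Big)^{-2(s-k)}L\bigl(2(s-k),\chi\bigr).
\]
Substituting this and collecting powers via $n^{2s}(q/2\pi)^{2(s-k)}n^{-2k}=(qn/2\pi)^{2(s-k)}=(2\pi/qn)^{-2s}(2\pi/qn)^{2k}$ gives exactly the claimed formula, with $n^{2s}\mathcal{O}(n^{-2-2m})$ matching $(2\pi/qn)^{-2s}\mathcal{O}(n^{-2-2m})$ up to an $s,q$-dependent constant.

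The only genuinely delicate point is the error term, and here it is mild: since the decomposition involves a fixed finite number $q-1$ of residues, there is no question of summing infinitely many remainders, and the uniformity in $n$ of each $\mathcal{O}(n^{-2-2m})$ is inherited directly from Theorem~\ref{Asymptotic for the bundle Laplacian}. I would simply record that the implied constants depend on $s$, $q$, and $m$ but not on $n$, and that the exclusion $s\notin\{\tfrac12,\tfrac32,\ldots\}$ is carried over verbatim from the bundle asymptotics. Thus the substance of the argument is the conceptual reduction to bundle Laplacians followed by the evenness-plus-Hurwitz bookkeeping; nothing beyond Theorem~\ref{Asymptotic for the bundle Laplacian} and standard character-sum identities is required.
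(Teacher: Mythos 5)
Your proposal is correct and follows essentially the same route as the paper: the paper's proof (which treats even and odd characters simultaneously via the parameter $a\in\{0,1\}$) reduces, in the even case $a=0$, to exactly your decomposition $L_n(s,\chi)=\sum_{r=1}^{q-1}\chi(r)\,\zeta_n(s,r/q)$ followed by an application of Theorem~\ref{Asymptotic for the bundle Laplacian} and the identity $\sum_{r}\chi(r)\,\zeta_{\RR/\ZZ}(s-k,r/q)=2(q/2\pi)^{2(s-k)}L(2(s-k),\chi)$. The only difference is that you spell out two steps the paper leaves implicit — the vanishing of the $n\zeta_\ZZ(s)$ term via non-principality, and the evenness-plus-Hurwitz verification of the character-sum identity — which is a welcome clarification but not a different argument.
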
  

We now proceed to prove this theorem and its odd counterpart, Theorem \ref{Asymptotic for L tilde} as corollaries of Theorem \ref{Asymptotic for the bundle Laplacian}.

\begin{proof}[Proof of Theorem \ref{Asymptotic for even} and Theorem \ref{Asymptotic for L tilde}]
For any non-principal Dirichlet character \( \chi \) of modulus \( q > 1 \), let \( a = 0 \) if \( \chi \) is even and \( a = 1 \) if \( \chi \) is odd. Then we have 
\[ \sum_{r=1}^{q-1}\chi(r)(\partial_\theta^a\zeta_n)(s,r/q)= \left(-s\frac{2\pi}{n}\right)^a \cdot L_n(s,\chi). \]
and 
\[ \sum_{r = 1}^{q-1}\chi(r)(\partial_\theta^a\zeta_{\RR/\ZZ})(s,r/q) =(-4s\pi)^a \cdot 2 \left(\frac{q}{2\pi}\right)^{2s+a}L(2s+a,\chi). \]
Therefore, for any \( \chi \) non-principal, we obtain :
\begin{align*}
    L_n(s,\chi) & = \sum_{r = 1}^{q-1}\chi(r)\partial_\theta^a\zeta_n(s,r/q)\left(\frac{n}{-2\pi s}\right)^a \\
    & = n^{2s}\left(\sum_{k = 0}^{l}a_k(s)\left(\frac{1}{-2\pi s}\right)^a\sum_{r = 1}^{q-1}\chi(r)(\partial_\theta^a\zeta_{\RR/\ZZ})(r/q,s-k)n^{a-2k} + \mathcal{O}(n^{a-2-2l}) \right) \\
    & = 2n^{2s}\left(\sum_{k = 0}^{l}a_k(s)\left(\frac{(2s-k)}{s}\right)^a \left(\frac{q}{2\pi}\right)^{2(s-k)+a}L(a+2(s-k),\chi)n^{a-2k} + \mathcal{O}(n^{a-2-2l})\right) \\
    & = 2 \left(\frac{qn}{2\pi}\right)^{2s}\left(\sum_{k=0}^{l}a_k(s)\left(\frac{2(s-k)}{s}\right)^aL(a+2(s-k),\chi)\left(\frac{qn}{2\pi}\right)^{a-2k} + \mathcal{O}(n^{a-2-2l})\right).
\end{align*}
This concludes the proof of both Theorem \ref{Asymptotic for L tilde} and Theorem \ref{Asymptotic for even}, recalling that 
\[ a_k(s) \frac{2(s-k)}{s} = b_k(s). \]
\end{proof}
\begin{remark}
    The restriction on \( s \) in Theorem \ref{Asymptotic for even} can be omitted. Indeed, applying the Euler-MacLaurin-Sidi when \( s \) is a half integer produces \( \log \) terms that vanish when summed against the character. But in order not to lengthen the present paper, we did not include this argument here.
\end{remark}

In fact, in the next section, what will be most important for us is the two first terms of the asymptotic, that is the case \( l = 1 \) in Theorem \ref{Asymptotic for L tilde}. Precisely the asymptotic of size \( l = 1 \) is
\begin{equation}\label{Asymptotic of size 2 for L tilde}
L_n(s,\chi) = 2\left(\frac{qn}{2 \pi}\right)^{1+2s}\left[2L(1+2s,\chi) + \frac{s-1}{6}L(2s-1,\chi)\left(\frac{\pi}{qn}\right)^{2} + \mathcal{O}(n^{-4}) \right],
\end{equation}
because \( b_0(s) = 2 \), and \( b_1(s) = \tfrac{s-1}{6} \).

\paragraph*{Proof of the corollary about Siegel zeros.}

Let $\chi$ be a non-principal, even and real Dirichlet character of modulus $q>1$. Fix $0<s<1$. Then $L(s-2,\chi) <0$ and it follows from the asymptotics in Theorem \ref{Asymptotic for even} that if $L_n(s,\chi)\geq 0$ for a subsequence of $n\rightarrow \infty$, then $L(s,\chi)>0$ and in particular non-zero.

Let $\chi$ be an odd and real Dirichlet character of modulus $q$.  Fix $0<s<1$. The second term in the asymptotics in Theorem \ref{Asymptotic for L tilde} now has a different sign, since $(s-3)/6<0$. It follows that if $L_n(s,\chi)\geq 0$ for a subsequence of $n\rightarrow \infty$, then $L(s,\chi) \geq 0$. 

Since Siegel zeros are known to be simple for real characters, they cannot exist if $L(s,\chi) \geq 0$ for all $0<s<1$ (or actually, it is enough with a certain interval $1-\epsilon <s<1$.)

This discussion establishes Corollary \ref{Corollary about Siegel zeros for odd Dirichlet characters}.

%%%%%%%%%%%%%%%%%%%%%%%%%%%%%%%%%%%%%%%%%%%%%%%%%%%%%%%%%%%%%%%%%%%%
\section{Equivalence with GRH for odd character.} \label{sec:GRH}
In this section, we prove Theorem \ref{GRH for odd characters}.
Recall that the following result is proven in \cite{F16}.
\begin{theorem}[Theorem 1.3 of \cite{F16}]\label{GRH equivalence for even}
Let \( \chi \) be a primitive and even character of conductor \( q > 1 \). The completed \( L \)-function of \( \ZZ/qn \ZZ \) is defined by
\[ \xi_n(s,\chi) \ldef 2^s\,n^{-s}\left(\frac{\pi}{q}\right)^{\frac{s}{2}}\Gamma\left(\frac{s}{2}\right)L_n(\tfrac s2,\chi), \]
for \( 0 < \re s < 1 \). Then the following statements are equivalent 
\begin{enumerate}[label = (\roman*)]
\item For all \( 0 < \re s < 1 \) such that \( \im s \ge 8 \), we have 
\[ \lim_{n \to \infty}\left|\frac{\xi_n(s,\chi)}{\xi_n(1-s,\overline{\chi})}\right| = 1. \]
\item In the region \( \im s \ge 8 \), all the zeros of the \( L \)-function associated to \( \chi \) have real part \( 1/2 \).
\end{enumerate}
\end{theorem}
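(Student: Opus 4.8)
The plan is to recover Friedli's equivalence by feeding the even-character asymptotic expansion (Theorem~\ref{Asymptotic for even}) into the definition of $\xi_n$ and reading off its limiting shape. Writing $\Lambda(s,\chi) = (q/\pi)^{s/2}\Gamma(s/2)L(s,\chi)$ for the classical completed $L$-function, I would first substitute the two-term ($m=1$) form of the asymptotic, with argument $s/2$, into $\xi_n(s,\chi) = 2^s n^{-s}(\pi/q)^{s/2}\Gamma(s/2)L_n(s/2,\chi)$. After collecting the powers of $n$, $2$, $q$ and $\pi$, the leading coefficient collapses and one obtains
\[ \xi_n(s,\chi) = 2\Lambda(s,\chi) + 2(q/\pi)^{s/2}\Gamma(s/2)\,a_1(s/2)\left(\tfrac{2\pi}{qn}\right)^{2}L(s-2,\chi) + \mathcal{O}(n^{-4}). \]
The crucial structural fact is that the leading term is exactly twice the genuine completed $L$-function, whose functional equation $\Lambda(s,\chi) = w_\chi\Lambda(1-s,\overline\chi)$ with $|w_\chi|=1$ (by primitivity) already encodes $|\Lambda(s,\chi)| = |\Lambda(1-s,\overline\chi)|$.

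Next I would split into the generic case and the zero case. Where $L(s,\chi)\neq 0$ one has $\Lambda(s,\chi)\neq 0$, and since in the critical strip $L(s,\chi)=0 \iff L(1-s,\overline\chi)=0$ (via the functional equation and the non-vanishing of the $\Gamma$-factor), the denominator $\xi_n(1-s,\overline\chi)$ also has nonzero leading term; hence $\lim_n|\xi_n(s,\chi)/\xi_n(1-s,\overline\chi)| = |\Lambda(s,\chi)/\Lambda(1-s,\overline\chi)| = |w_\chi| = 1$ unconditionally. So the limit can only deviate from $1$ at a zero $s_0$, where both leading terms vanish and the ratio is governed by the subleading $k=1$ terms:
\[ \lim_{n\to\infty}\left|\frac{\xi_n(s_0,\chi)}{\xi_n(1-s_0,\overline\chi)}\right| = \left|\frac{(q/\pi)^{s_0/2}\Gamma(s_0/2)a_1(s_0/2)L(s_0-2,\chi)}{(q/\pi)^{(1-s_0)/2}\Gamma((1-s_0)/2)a_1((1-s_0)/2)L(-1-s_0,\overline\chi)}\right| \rdef R. \]
Here $a_1(s_0/2)\neq 0$ since $s_0\neq 0$ in the strip, and both $L(s_0-2,\chi)$ and $L(-1-s_0,\overline\chi)$ are nonzero because $s_0-2$ and $-1-s_0$ lie to the left of the strip and are not trivial zeros (their imaginary parts are $\pm\im s_0\neq 0$, so they miss the negative even integers); thus $R$ is a well-defined positive number.

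The whole equivalence then reduces to the claim that $R=1 \iff \re s_0 = \tfrac12$. For an on-line zero $s_0 = \tfrac12 + it_0$ one has $1-s_0 = \overline{s_0}$, so each factor of the denominator is the complex conjugate of the corresponding numerator factor — using $\overline{(q/\pi)^{s_0/2}} = (q/\pi)^{(1-s_0)/2}$, $\overline{\Gamma(s_0/2)} = \Gamma((1-s_0)/2)$, the realness of the coefficients of $a_1$, and $\overline{L(s_0-2,\chi)} = L(\overline{s_0-2},\overline\chi) = L(-1-s_0,\overline\chi)$ — and therefore $R=1$. Combined with the generic case, this gives (ii)$\Rightarrow$(i). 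For the converse I would argue by contraposition: an off-line zero with $\re s_0 \neq \tfrac12$ breaks the conjugate symmetry, and the ratio $(q/\pi)^{s_0/2}$ against $(q/\pi)^{(1-s_0)/2}$ alone contributes a modulus discrepancy $(q/\pi)^{\re s_0 - 1/2}\neq 1$ (as $q/\pi\neq 1$).

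The main obstacle is exactly this last step: ruling out that the discrepancy is accidentally cancelled by the $\Gamma$-, $a_1$- and $L$-value factors, i.e. proving $R\neq 1$ genuinely. This is where the restriction $\im s \ge 8$ and the horizontal-monotonicity estimates of Matiyasevich–Saidak–Zvengrowski~\cite{MSZ14} enter: after using the classical functional equation once more to pull $L(s_0-2,\chi)$ and $L(-1-s_0,\overline\chi)$ to comparable arguments with $\re\in(2,3)$ on the horizontal lines at height $\pm t_0$, their quantitative strict inequalities on $|L(\sigma+it,\chi)|$ in $\sigma$ (valid for $t$ large) force a strict inequality between numerator and denominator moduli, hence $R\neq 1$. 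Securing this strict inequality — uniformly in the stated region, and in a form robust to the auxiliary $\Gamma$ and $a_1$ factors — is the technical heart of the argument; the asymptotic bookkeeping above is routine by comparison.
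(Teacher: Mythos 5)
Your overall architecture is the right one, and it matches the intended mechanism: note that the paper does not actually reprove this statement (it is quoted as Theorem 1.3 of \cite{F16}), but its proof of Theorem \ref{GRH for odd characters} in Section \ref{sec:GRH} is an adaptation of Friedli's argument, so the template is visible there. Your bookkeeping giving $\xi_n(s,\chi) = 2\Lambda(s,\chi) + \mathcal{O}(n^{-2})$ is correct, as are the generic case (limit $|w_\chi| = 1$ when $\Lambda(s,\chi)\neq 0$, using $L(s,\chi)=0 \iff L(1-s,\overline\chi)=0$), the reduction at a zero $s_0$ to the ratio $R$ of subleading coefficients, the nonvanishing checks for $a_1(s_0/2)$, the $\Gamma$-factors and the shifted $L$-values, and the conjugate-symmetry computation showing $R=1$ on the critical line.

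The genuine gap is the step you yourself flag: proving $R\neq 1$ when $\re s_0\neq \tfrac12$, and the route you sketch for it is misdirected. The monotonicity input from \cite{MSZ14} (Corollary 2.6L) concerns the \emph{completed} function, strictly decreasing in $\sigma$ for $\sigma\le 0$; your plan to push the bare values $L(s_0-2,\chi)$, $L(-1-s_0,\overline\chi)$ via the functional equation to $\re\in(2,3)$ and compare $|L|$ there does not connect to that result, and crossing the functional equation reintroduces exactly the $\Gamma$- and power factors you were hoping to sidestep. The correct move is the opposite one: recognize the subleading coefficient as already being a completed value, since $\Gamma(s/2)=\tfrac{s-2}{2}\Gamma\bigl(\tfrac{s-2}{2}\bigr)$ and $a_1(s/2)=s/24$ give a coefficient of the form $\beta(s,\chi)=\tfrac{\pi}{q}\tfrac{s(s-2)}{6}\,\xi(s-2,\chi)$, with $\re(s-2)\in(-2,-1)$ squarely inside the MSZ region. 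Then $\sigma\mapsto|\beta(\sigma+it,\chi)|$ is strictly decreasing (polynomial factor times the MSZ monotonicity), hence $\sigma\mapsto|\beta(1-\sigma-it,\overline\chi)|$ is strictly increasing, so on each horizontal line the equality $|\beta(s,\chi)|=|\beta(1-s,\overline\chi)|$ holds at most once; since it holds at $\sigma=\tfrac12$ by conjugation, it holds only there --- no direct strict-inequality estimate, ``robust to the auxiliary $\Gamma$ and $a_1$ factors,'' is ever needed. Your aside that the $(q/\pi)^{s_0/2}$ power alone contributes a discrepancy is, as you concede, not an argument. Finally, the restriction $\im s\ge 8$ is not just ``$t$ large'' in a vague sense: in the even case the polynomial factor $|s(s-2)|$ fails to be decreasing in $\sigma$ near the real axis (on it, $\sigma(2-\sigma)$ is increasing on $(0,1)$), unlike the odd-case factor $|(s-1)(s-3)|$, which is why some lower bound on $|\im s|$ is genuinely needed here; compare the paper's remark that this method lowers the bound $8$ to below $\tfrac12$ but cannot remove it without improving the monotonicity input.
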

For \( \chi \) odd, primitive and of conductor \( q > 1 \), the completed Dirichlet \( L \)-function is defined to be
\[ \xi(s,\chi) \ldef \left(\frac{\pi}{q}\right)^{-\frac{s}{2}}\Gamma\left(\frac{s+1}{2}\right) L(s,\chi), \]
and it satisfies the functional equation 
\[ \xi(s,\chi) = w_\chi\xi(1-s,\overline{\chi}), \]
where
\[ w_\chi = \frac{\tau(\chi)}{i\sqrt{q}}, \]
with \( \tau(\chi) \) the Gauss sum 
\[ \tau(\chi) \ldef \sum_{j = 1}^{q} \chi(j)e^{\frac{2 \pi i j}{q}}. \]
It satisfies \( |\tau(\chi)| = \sqrt{q} \) and therefore \( |w_\chi| = 1 \).

The proof of Theorem \ref{GRH for odd characters} that we present here is an adaptation of the proof of Theorem \ref{GRH equivalence for even} presented in \cite{F16}, with some modifications coming from \( \chi \) being odd. We start with a lemma, which is proven in \cite{MSZ14}
\begin{lemma}[Corollary 2.6L in \cite{MSZ14}]
Fix \( t \) real and write \( s = \sigma + it \), then the function 
\[ \sigma \mapsto |\xi(s-2,\chi)|, \]
is strictly decreasing in the interval \( 0 < \sigma < 1 \).
\end{lemma}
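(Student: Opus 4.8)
The plan is to reduce the monotonicity statement to a sign condition on the logarithmic derivative of $\xi$ and then to read off that sign from the Hadamard factorization of the completed $L$-function. Write $w = s-2 = (\sigma-2)+it$ and set $g(\sigma) = \log|\xi(w,\chi)| = \re\log\xi(w,\chi)$. Since $\xi(\cdot,\chi)$ is entire and does not vanish off the critical strip, $\log\xi$ is holomorphic near $w$, and because $dw/d\sigma = 1$ the Cauchy--Riemann equations give $g'(\sigma) = \re\frac{\xi'}{\xi}(w,\chi)$. As $\sigma$ runs over $(0,1)$ the argument $w$ satisfies $\re w \in (-2,-1)$, so it suffices to prove $\re\frac{\xi'}{\xi}(w,\chi) < 0$ throughout the half-plane $\re w < 0$.

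The main tool will be the Hadamard product. For $\chi$ primitive and odd, $\xi(s,\chi) = (\pi/q)^{-s/2}\Gamma(\tfrac{s+1}{2})L(s,\chi)$ is entire of order $1$, the poles of the $\Gamma$-factor at $s=-1,-3,\dots$ being cancelled exactly by the trivial zeros of $L(s,\chi)$; its only zeros are the nontrivial zeros $\rho$ of $L(s,\chi)$, all with $0<\re\rho<1$. Hence $\xi(s,\chi)=e^{A+Bs}\prod_\rho(1-s/\rho)e^{s/\rho}$ and
\[ \frac{\xi'}{\xi}(w,\chi) = B + \sum_\rho\left(\frac{1}{w-\rho}+\frac1\rho\right). \]
I would then invoke the standard real-part identity $\re B = -\sum_\rho\re(1/\rho)$ for the constant in the Hadamard product of a completed Dirichlet $L$-function (established by pairing $\rho$ with its functional-equation partner, as in Davenport's treatment), which cancels the constant against the $1/\rho$ terms after taking real parts and leaves
\[ \re\frac{\xi'}{\xi}(w,\chi) = \sum_\rho \re\frac{1}{w-\rho} = \sum_\rho \frac{\re w - \re\rho}{|w-\rho|^2}. \]
Since $\re w < 0 < \re\rho$, every summand is strictly negative, so $\re\frac{\xi'}{\xi}(w,\chi)<0$, which is exactly what the reduction in the first paragraph requires.

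The step I expect to be delicate is the justification of the identity $\re\frac{\xi'}{\xi}(w,\chi)=\sum_\rho\re\frac1{w-\rho}$: the series $\sum_\rho(\frac{1}{w-\rho}+\frac1\rho)$ converges only conditionally, and one must pair each zero with the appropriate partner, together with $\re B=-\sum_\rho\re(1/\rho)$, to make the rearrangement and the cancellation of constants legitimate; once this is in place the strict negativity of each term is immediate. An alternative I would mention but not pursue is to cross the critical line via the functional equation $\xi(w,\chi)=w_\chi\,\xi(1-w,\overline\chi)$, reducing the claim to $\re\frac{\xi'}{\xi}(u,\overline\chi)>0$ for $\re u\in(2,3)$ and expanding $\log\xi$ into its $\log(q/\pi)$, digamma, and $\frac{L'}{L}$ pieces. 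However, a crude bound of the $\frac{L'}{L}$ term by $-\zeta'/\zeta(\re u)$ already fails near $\re u = 2$, so the cancellation that is encoded geometrically in the sum over zeros appears unavoidable, and the Hadamard route is the robust one.
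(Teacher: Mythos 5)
Your argument is sound, but it is worth knowing that the paper itself does not prove this lemma: it is quoted directly from Corollary 2.6L of \cite{MSZ14}, which establishes that $\sigma \mapsto |\xi(\sigma+it,\chi)|$ is strictly decreasing on $\sigma \le 0$ for every non-principal primitive character, and the lemma follows immediately since $\re(s-2) \in (-2,-1)$ when $0<\re s<1$. What you have written is essentially a reconstruction of the classical argument underlying that citation: the reduction to $\re \frac{\xi'}{\xi}(w,\chi)<0$ on $\re w<0$, the Hadamard factorization, and the identity $\re B = -\sum_\rho \re(1/\rho)$ (Davenport, Multiplicative Number Theory, Ch.~12) together give $\re\frac{\xi'}{\xi}(w,\chi)=\sum_\rho \frac{\re w-\re\rho}{|w-\rho|^2}<0$, exactly as in the cited reference. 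So your route is correct and self-contained where the paper is citation-only; the trade-off is that you must import the two standard facts above, whereas the paper simply outsources the whole statement.

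Two points to tighten. First, \emph{strict} decrease requires the sum over $\rho$ to be nonempty: each term is negative, but if $L(s,\chi)$ had no nontrivial zeros you would only conclude that $|\xi|$ is non-increasing (indeed constant). You should invoke the classical fact that $L(s,\chi)$ has infinitely many zeros in the critical strip (Riemann--von Mangoldt for Dirichlet $L$-functions). Second, your concern about conditional convergence is slightly misplaced: the series $\sum_\rho\bigl(\frac{1}{w-\rho}+\frac{1}{\rho}\bigr)$ converges absolutely (its terms are $O(|\rho|^{-2})$), and after taking real parts both $\sum_\rho \re\frac{1}{w-\rho}$ and $\sum_\rho \re\frac{1}{\rho}$ converge absolutely as well, since the zeros have real parts in $(0,1)$ and $\sum_\rho (1+\gamma_\rho^2)^{-1}<\infty$; so the splitting and recombination of sums is harmless, and the only genuinely nontrivial input is the identity $\re B=-\sum_\rho\re(1/\rho)$ itself, which is where the functional-equation pairing lives. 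Finally, note that the paper's normalization $\xi(s,\chi)=(\pi/q)^{-s/2}\,\Gamma\bigl(\tfrac{s+1}{2}\bigr)L(s,\chi)$ differs from Davenport's completed $L$-function only by the constant factor $(q/\pi)^{1/2}$; this changes neither the zero set nor the Hadamard constant $B$, so the borrowed identity applies verbatim to the $\xi$ used here.
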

In fact, in Corollary 2.6L of \cite{MSZ14}, they showed that
\[ \sigma \mapsto |\xi(s,\chi)| \] 
is strictly decreasing in the region \( \sigma \le 0  \) for any non-principal primitive character.
\begin{proof}[Proof of Theorem \ref{GRH for odd characters}]
Recall that 
\[ \xi_n(s,\chi) \ldef 2^sn^{-s}\left(\frac{\pi}{q}\right)^{\frac{s}{2}}\Gamma\left(\frac{s+1}{2}\right)L_n\left(\frac{s-1}{2},\chi\right), \]
and therefore the asymptotic \eqref{Asymptotic of size 2 for L tilde} gives 
\[ \xi_n(s,\chi) = 4\xi(s,\chi) + \beta(s,\chi)n^{-2} + \mathcal{O}(n^{-4}) \]
where 
\[ \beta(s,\chi) = \frac{\pi}{q}\frac{(s-1)(s-3)}{6} \xi(s-2,\chi). \]
First observe that if \( \xi(s,\chi) \neq 0 \) then clearly we have
\[ \lim_{n \to \infty} \frac{\xi_n(s,\chi)}{\xi_n(1-s,\overline{\chi})} = w_\chi. \]
However, since \( 0 < \re s < 1 \), we know that \( \beta(s,\chi) \neq 0 \) and \( \beta(1-s,\overline{\chi}) \neq 0 \), therefore if \( \xi(s,\chi) = 0 \),  we have that the following equality
\[ \lim_{n \to \infty} \left|\frac{\xi_n(s,\chi)}{\xi_n(1-s,\overline{\chi})}\right| = 1 \]
is equivalent to 
\[ |\beta(s,\chi)| = |\beta(1-s,\overline{\chi})|. \]
Now, the previous lemma implies directly that \( |\beta(s,\chi)| \) is strictly decreasing in the strip \( 0 < \re s < 1 \), for any non-principal primitive character \( \chi \). This shows that the equality \( |\beta(s,\chi)| = |\beta(1-s,\overline{\chi})| \) can happen at most once in the region \( 0 < \re s < 1 \) when the imaginary part is fixed. Moreover we know that equality holds at \( \re s = 1/2 \), because for such a \( s \),
\begin{align*}
\beta(1-s,\overline{\chi}) & = \beta(\overline{s},\overline{\chi}) \\
& = \overline{\beta(s,\chi)}.
\end{align*}
This shows the claimed equivalence.
\end{proof}

We now specialize the previous theorem when \( \chi \) is real, odd and for real zeros of \( L \). Recall that for such a \( \chi \), Gauss himself proved that \( w_\chi = 1 \). Therefore, as in the previous proof, for real \( 0 < s < 1 \), if \( \xi(s,\chi) \neq 0 \), then 
\[ \lim_{n \to \infty}\frac{\xi_n(s,\chi)}{\xi_n(1-s,\overline{\chi})} = 1. \]
Moreover, for \( 0 < s < 1 \), \( \beta(s,\chi) \) is real and strictly decreasing. Therefore, in the case \( \xi(s,\chi) = 0 \), then we obtain the equivalence 
\begin{align*}
    \lim_{n \to \infty}\frac{\xi_n(s,\chi)}{\xi_n(1-s,\overline{\chi})} = 1 & \iff \beta(s,\chi) = \beta(1-s,\overline{\chi}) \\
    & \iff s = 1/2.
\end{align*}
We therefore have proven Corollary \ref{GRH in the real line for odd characters}.
\begin{remark}
Using the exact same method as in the proof of Theorem \ref{GRH for odd characters}, one can reduce the lower bound of \( 8 \) in the imaginary part of Theorem \ref{GRH equivalence for even} to less than \( 1/2 \). However, we believe that one would need an improvement of this monotonicity argument to erase completely this lower bound.
\end{remark}

%%%%%%%%%%%%%%%%%%%%%%%%%%%%%%%%%%%%%%%%%%%%%%%%%%%%%%%%%%%%%%%%
\section{Real zeros of Dirichlet $L$-functions}
\label{sec:realseros}

\subsection{No real zero for Dirichlet $L$-functions with heat positive character}
Let $\chi$  be an even, real and primitive Dirichlet character modulo $q>1$. The starting point is the following asymptotics first established by Friedli and reproved in section \ref{Section for asymptotic for odd character} above, Theorem \ref{Asymptotic for even}:

For any fixed \( s \in \CC \), the following asymptotics, for \( n \to \infty \), hold
\begin{equation}\label{Asymptotic of size 2 for even characters L-functions}
    L_n(s,\chi) = 2\left(\frac{qn}{2\pi}\right)^{2s}\left(L(s,\chi)+\frac{s}{12}\left(\frac{qn}{2\pi}\right)^{-2}L(s-2,\chi)+\mathcal{O}(n^{-4})\right). 
\end{equation}

Now note that since \( L(s,\chi) > 0 \) for \( s > 1 \) real, the functional equation for even and primitive characters tells us that \( L(s,\chi) \) is negative for \( -2 < s < 0 \). Indeed, for \( \chi \) primitive of conductor \( q > 1 \), real and even, we know that \( L(s, \chi) \) is entire and has the functional equation
\begin{equation}
\xi(s,\chi) = \frac{\tau(\chi)}{\sqrt{q}}\xi(1-s,\chi),
\end{equation}
where \( \xi(s,\chi) \ldef (\pi/q)^{-s/2}\Gamma(s/2)L(s,\chi) \) and \( \tau(\chi) \) is the Gauss sum. It is well-known that for the characters we consider we have \( \tau(\chi) = \sqrt{q} \), which shows that \( \xi(s,\chi) \) is even around \( s = 1/2 \). 

We say that \( \chi \) is \emph{heat positive} if for some sequence $n_i\rightarrow \infty$, the following inequality, 
\begin{equation}
\sum_{j=1}^{qn_i-1}\chi(j)e^{-t4\sin^2(\pi j/qn_i) }\geq 0   \label{eq:heatpositive}
\end{equation}
holds for all $t\geq 0$

We have the following corollary of the asymptotics:

\begin{corollary} \label{cor:real zeros}
Let \( \chi \) be a real, even, primitive Dirichlet character of conductor \( q > 1 \) and which is heat positive. Then $L(s,\chi)>0$  for \( 0<s<1 \).
\end{corollary}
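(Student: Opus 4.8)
The plan is to transfer the heat positivity hypothesis \eqref{eq:heatpositive} into a sign statement for the discrete $L$-function $L_{n_i}(s,\chi)$, and then to read off the sign of $L(s,\chi)$ from the two-term asymptotic \eqref{Asymptotic of size 2 for even characters L-functions}. First I would record that $L_n(s,\chi)$ is the Mellin transform of the weighted heat trace: writing $\Theta_n(t) = \sum_{j=1}^{qn-1}\chi(j)\,e^{-t\cdot 4\sin^2(\pi j/qn)}$, the elementary identity $\lambda^{-s} = \tfrac{1}{\Gamma(s)}\int_0^\infty e^{-t\lambda}\,t^s\,\tfrac{dt}{t}$ (valid for $\lambda>0$ and real $s>0$) applied to each nonzero eigenvalue $\lambda_j = 4\sin^2(\pi j/qn)$ gives
\[
L_n(s,\chi) = \frac{1}{\Gamma(s)}\int_0^\infty \Theta_n(t)\, t^s\,\frac{dt}{t}.
\]
The integral converges for real $s>0$ because $\Theta_n$ is bounded near $0$ and decays exponentially at infinity (the smallest eigenvalue $4\sin^2(\pi/qn)$ being strictly positive). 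Hence, along the sequence $n_i$ for which \eqref{eq:heatpositive} holds, the integrand $\Theta_{n_i}(t)\,t^{s-1}$ is nonnegative and $\Gamma(s)>0$, so $L_{n_i}(s,\chi)\ge 0$ for every real $s>0$.

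Next I would pin down the sign of the subleading data. As already observed in the text, $L(u,\chi)<0$ for $-2<u<0$: indeed $L(u,\chi)>0$ for $u>1$ by the Euler product, the completed function $\xi(s,\chi)=(\pi/q)^{-s/2}\Gamma(s/2)L(s,\chi)$ is even about $s=\tfrac12$ (using $\tau(\chi)=\sqrt q$), and for $-2<u<0$ one has $\Gamma(u/2)<0$ while $\xi(u,\chi)=\xi(1-u,\chi)>0$, which forces $L(u,\chi)<0$. In particular $L(s-2,\chi)<0$ for every $0<s<1$. Now fix $0<s<1$ and divide \eqref{Asymptotic of size 2 for even characters L-functions} by the positive factor $2(qn_i/2\pi)^{2s}$; the nonnegativity of $L_{n_i}(s,\chi)$ yields
\[
L(s,\chi) + \frac{s}{12}\left(\frac{qn_i}{2\pi}\right)^{-2}L(s-2,\chi) + \mathcal{O}(n_i^{-4}) \ge 0,
\]
and letting $n_i\to\infty$ gives $L(s,\chi)\ge 0$.

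The crux is upgrading this to strict positivity, and here the subleading term does the work. Suppose for contradiction that $L(s_0,\chi)=0$ for some $0<s_0<1$. Then the leading term of the displayed inequality vanishes, and the left-hand side is dominated by $\frac{s_0}{12}(qn_i/2\pi)^{-2}L(s_0-2,\chi)$, which is of exact order $n_i^{-2}$ and strictly negative because $s_0/12>0$ and $L(s_0-2,\chi)<0$, while the remainder is only $\mathcal{O}(n_i^{-4})$. Thus for all large $n_i$ the left-hand side is strictly negative, contradicting $L_{n_i}(s_0,\chi)\ge 0$. Combined with $L(s,\chi)\ge 0$ this gives $L(s,\chi)>0$ throughout $0<s<1$. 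The main obstacle is precisely this last step: heat positivity only delivers $L(s,\chi)\ge 0$ at the level of the leading asymptotic, so excluding a zero requires both the sign of the second-order coefficient and the functional equation placing $L(s-2,\chi)$ in the negativity region $(-2,0)$. I would take care to confirm that the $\mathcal{O}(n_i^{-4})$ remainder is genuinely of lower order than the $n_i^{-2}$ term for the fixed $s_0$, so that the sign contradiction is legitimate.
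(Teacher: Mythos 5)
Your proposal is correct and follows essentially the same route as the paper: Mellin-transforming the heat-positive trace to get $L_{n_i}(s,\chi)\ge 0$ (using $\Gamma(s)>0$), noting $L(s-2,\chi)<0$ for $0<s<1$ via the functional equation, and then letting the two-term asymptotic \eqref{Asymptotic of size 2 for even characters L-functions} force $L(s,\chi)>0$. The only difference is that you spell out explicitly the contradiction argument (a zero of $L(s,\chi)$ would make the $n_i^{-2}$ term dominate with a negative sign) that the paper compresses into the phrase ``the asymptotics force $L(s,\chi)>0$.''
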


\begin{proof}
The Mellin transform of the sum in the hypothesis gives $L_n (s,\chi)\cdot \Gamma(s)$. The \( \Gamma \) factor is positive on $s>0$, and hence heat positivity implies that $L_{n_i} (s,\chi)>0$. Therefore, since $L(s-2,\chi)<0$ for $0<s<1$ as remarked above, the asymptotics \eqref{Asymptotic of size 2 for even characters L-functions} force $L(s,\chi)>0 $.    
\end{proof}

Which characters are heat positive? Characters with small $q$ and with many partial sums of the character being positive. This points to Chowla and is closely linked to the classical theta function approach, see \cite{LM13}. For this reason, at present time, this section does not give any new results on the positivity of $L(s,\chi)$ on $0<s<1$. The question is whether the Mellin transform is always positive even when there is a window of $t$'s, moving with $n$, where the character is not heat positive.

\subsection{First examples of heat positive characters}

For illustration, we will prove a simple result: 
\begin{proposition}\label{thm:heatpositive}
    Let \( \chi \) be a real, even, primitive Dirichlet character, primitive of conductor \( q > 1 \). Then the inequality (\ref{eq:heatpositive}) holds for all $t$ and $n$ such that  $0<t < n^2 / 2\log 2$ and  $t>q^{2}n^{2}\log2/4(16-\pi^{2})$.
\end{proposition}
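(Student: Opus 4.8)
The plan is to show that the sum in (\ref{eq:heatpositive}) is dominated by its two extreme terms $j=1$ and $j=N-1$ (where $N=qn$), which carry $\chi(1)=\chi(N-1)=1$, and to bound all other terms in absolute value. Write $f(j)=e^{-4t\sin^2(\pi j/N)}$, so that the left-hand side of (\ref{eq:heatpositive}) is $S(t)=\sum_{j=1}^{N-1}\chi(j)f(j)$. Since $\chi$ is even and $q\mid N$ we have $\chi(N-j)=\chi(-1)\chi(j)=\chi(j)$, while plainly $f(N-j)=f(j)$; hence $S(t)=2\sum_{j=1}^{\lfloor N/2\rfloor}\chi(j)f(j)$ up to the central term $j=N/2$ (present only when $N$ is even), which is counted with weight one. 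Because $|\chi(j)|\le 1$ and $\chi(1)=1$, it therefore suffices to establish the single inequality $f(1)>\sum_{j=2}^{\lfloor N/2\rfloor}f(j)$, the even-$N$ central term being harmlessly absorbed into the right-hand majorant.

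The two elementary inputs are $\sin x\le x$ and Jordan's inequality $\sin x\ge \tfrac{2}{\pi}x$ on $[0,\pi/2]$. For the leading term, $\sin(\pi/N)\le \pi/N$ gives $f(1)\ge e^{-4\pi^2 t/N^2}$. For the remaining indices, all of which satisfy $\pi j/N\le\pi/2$, Jordan's inequality gives $\sin(\pi j/N)\ge 2j/N$, whence $f(j)\le e^{-16tj^2/N^2}$ for $2\le j\le N/2$ (with equality at $j=N/2$, so the central term is indeed majorized). Setting $\beta=16t/N^2$ and extending the sum to infinity, the claim reduces to the clean estimate
\[
\sum_{j\ge 2}e^{-\beta j^2}<e^{-(\pi^2/4)\beta},\qquad\text{equivalently}\qquad \sum_{j\ge 2}e^{-(j^2-\pi^2/4)\beta}<1.
\]
Here the lower hypothesis on $t$ enters decisively: the condition $t>q^2n^2\log 2/\bigl(4(16-\pi^2)\bigr)$ is exactly $\bigl(4-\tfrac{\pi^2}{4}\bigr)\beta>\log 2$, i.e.\ $e^{-(4-\pi^2/4)\beta}<\tfrac12$, which bounds the $j=2$ term by $\tfrac12$. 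The tail $j\ge 3$ is then negligible: every exponent $j^2-\tfrac{\pi^2}{4}$ exceeds $9-\tfrac{\pi^2}{4}>6$, so on the admissible range $\beta>\log 2/(4-\pi^2/4)\approx 0.45$ the sum over $j\ge 3$ is below $0.06$, and the total stays well under $1$.

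The genuine point of care—and the reason the stated constant $1/\bigl(4(16-\pi^2)\bigr)$ is clean, with no stray $\log N$—is that one must use the \emph{sharp} Jordan bound and a geometric tail rather than a crude ``number of terms'' estimate: bounding $\sum_{j\ge 2}f(j)\le \tfrac{N}{2}f(2)$ would force $t\gtrsim N^2\log(N/2)/\bigl(4(16-\pi^2)\bigr)$, introducing a spurious logarithm. I expect no further obstacle; once the reduction to the half-sum and the two trigonometric inequalities are in place, the argument is the two-line verification above. In this scheme the lower bound on $t$ is the operative hypothesis that makes $f(1)$ outweigh the rest, while the upper bound $t<n^2/(2\log 2)$ only delineates the window in which the statement is asserted: the two conditions are simultaneously satisfiable precisely when $q^2<2(16-\pi^2)/(\log 2)^2$, so the proposition exhibits an explicit band of positivity for the small-conductor characters, in keeping with its role as a first illustration.
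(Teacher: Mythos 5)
Your argument for the regime $t>q^{2}n^{2}\log 2/(4(16-\pi^{2}))$ is correct, and it is essentially the paper's own argument there: fold the sum by evenness, let $j=1$ dominate using $\sin x\le x$, bound the remaining terms with Jordan's inequality $\sin x\ge\tfrac{2}{\pi}x$, and control the tail; your identification of the hypothesis with $e^{-(4-\pi^{2}/4)\beta}<\tfrac12$ (where $\beta=16t/N^{2}$) and your numerical tail estimate both check out. The gap is in your reading of the statement. You interpret the two inequalities on $t$ as a conjunction, so the upper bound $t<n^{2}/(2\log 2)$ does no work in your proof, and you conclude that the proposition only concerns a band of $t$ which is nonempty when $q^{2}<2(16-\pi^{2})/(\log 2)^{2}$. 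In the paper the two inequalities delimit two \emph{separate} regimes and the proposition asserts positivity on each of them (read the ``and'' as a union of the two ranges); this is what the surrounding text relies on: heat positivity of $\chi$ requires the inequality for all $t\ge 0$, and the paper deduces it for conductors up to about $7$ precisely because the two regimes overlap and jointly cover $(0,\infty)$ for such $q$. Under your reading the proposition would never produce a single heat positive character, and Corollary \ref{cor:real zeros} could never be applied.

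What is missing, then, is the entire small-$t$ regime $0<t<n^{2}/(2\log 2)$, which is the other half of the paper's proof and rests on a completely different mechanism --- and it cannot be reached by your spectral-side estimate, since for $t\ll N^{2}$ all the factors $e^{-4t\sin^{2}(\pi j/N)}$ are close to $1$ and the $j=1$ term cannot dominate the others in absolute value. The paper instead passes to the dual side via the trace identity \eqref{Two expressions of the trace}: by the covering formula \eqref{Heat Kernel for covering Z to Z/qnZ}, the relevant quantity equals $2e^{-2t}\sum_{m\ge 1}\chi(m)I_{mn}(2t)$, a sum of $I$-Bessel functions in which the $m=1$ term dominates for small $t$. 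The key input is Amos's ratio bound $I_{k+1}(x)/I_{k}(x)\le x/(k+x)$, which gives $I_{mn}(2t)\le I_{n}(2t)\,c_{n}(2t)^{m-1}$ with $c_{n}(x)=\left(x/(n+x)\right)^{n}$, so positivity follows as soon as $c_{n}(2t)<1/2$, i.e.\ roughly when $t<n^{2}/(2\log 2)$, uniformly in $q$ --- which is exactly why no $q$ appears in that inequality. To complete the proof you would need to supply this Bessel-function argument, together with the remark that $\tau(\chi)=\sqrt q>0$ transfers positivity from the heat-kernel side of \eqref{Two expressions of the trace} back to the spectral sum in \eqref{eq:heatpositive}.
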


Due to the appearance of $q$ in one inequality and not the other, the proposition only implies that all characters with $q<7$ heat positive (for all $n$). This can be significantly strengthened assuming some partial sums of $\chi$ positive and using partial summation. One such case is treated in the next subsection.

The proof of Proposition \ref{thm:heatpositive} starts by writing \( L_n(s,\chi) \) as the Mellin transform of a solution of a particular discrete heat equation. Using Fourier transform and the heat kernel on the discrete circle \( \ZZ/qn\ZZ \), one gets two expressions for the heat solution.

Recall that the discrete Laplace operator \( \Delta \) on the discrete group \( \ZZ/qn\ZZ \) is defined by 
\begin{equation}
(\Delta f)(x) \ldef 2f(x)-f(x+1)-f(x-1),
\end{equation}
acting on functions \( f \in L^2(\ZZ/qn\ZZ) \). This operator is positive semi-definite and the eigenvalues are
\begin{equation}
\lambda_j \ldef 4 \sin^2\left(\frac{\pi j}{qn}\right)\text{, with } j \in \ZZ/qn\ZZ.
\end{equation} 
The discrete heat kernel associated is defined to be the one parameter family of operator 
\[ t \mapsto e^{-t\Delta}, \]
and is realized by convolution against the function
\begin{equation}
K_n(t,x) \ldef \frac{1}{qn}\sum_{j = 0}^{qn-1}e^{-t4\sin^2\left(\pi j /qn\right)}e^{2 \pi i xj / qn}.
\end{equation}
This function is the fundamental solution of the discrete heat equation 
\begin{equation}
\begin{cases}
(\Delta+\partial_t)K_n = 0 \\
K_n\big|_{t = 0} = \delta_0.
\end{cases}
\end{equation}
Let \( \delta_j \) be the Dirac function at \( j \in \ZZ/qn\ZZ \), that is \( \delta_j(x) = 1 \) if \( x = j \) and \( \delta_j(x) = 0 \) else.
Let us now consider the discrete heat equation with a special initial condition associated to the character \( \chi \), namely
\begin{equation}\label{Twisted discrete heat equation}
\begin{cases}
(\Delta + \partial_t)u = 0 \\
u\big|_{t = 0} = \chi^\flat,
\end{cases}
\end{equation}
with the initial condition being \( \chi^\flat \ldef \sum_{j = 1}^{q-1}\chi(j)\delta_{jn} \). 

The heat kernel on $\mathbb{Z}$ is the discrete Gaussian $K_{\ZZ}(t,n) = e^{-2t}I_n(2t)$, where $I$ denotes the $I$-Bessel function, see \cite{CJKS25} for a discussion on this. From usual consideration on covering of graphs, one obtains the formula 
\begin{equation}\label{Heat Kernel for covering Z to Z/qnZ}
    K_n(t,x) = \sum_{j \in \ZZ}K_{\ZZ}(t,x+jqn) = e^{-2t}\sum_{j \in \ZZ}I_{x+jqn}(2t)
\end{equation}

\begin{proposition}
The solution to \eqref{Twisted discrete heat equation} has the following two expressions
\begin{equation*}
\frac{\tau(\chi)}{qn}\sum_{\omega = 1}^{qn-1}\chi(\omega)e^{-t4\sin^2\left(\frac{\pi \omega}{qn}\right)}e^{2 \pi i \frac{x \omega}{qn}} = \sum_{j = 1}^{q-1}\chi(j) K_n(t,x-jn).
\end{equation*}
In particular, for $x=0$
\begin{equation}\label{Two expressions of the trace}
\tau(\chi) \sum_{j = 1}^{qn-1}\chi(j)e^{-t4\sin^2\left(\frac{\pi j}{qn}\right)} = qn \sum_{j = 1}^{q-1}\chi(j) K_n(t,jn).
\end{equation}
\end{proposition}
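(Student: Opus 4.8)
The plan is to produce the solution of \eqref{Twisted discrete heat equation} in two ways and read off the two sides of the claimed identity. Since $\Delta$ is translation-invariant on $\ZZ/qn\ZZ$, the fundamental solution $K_n$ yields the (unique) solution of the finite linear system \eqref{Twisted discrete heat equation} by convolution against the initial data, $u(t,x) = (K_n(t,\cdot)*\chi^\flat)(x)$. Because $\chi^\flat = \sum_{j=1}^{q-1}\chi(j)\delta_{jn}$ and convolution with $\delta_{jn}$ merely translates $K_n$ by $jn$, this gives
\[ u(t,x) = \sum_{j=1}^{q-1}\chi(j)K_n(t,x-jn), \]
which is the right-hand side. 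This $u$ does solve \eqref{Twisted discrete heat equation}: each translate $K_n(t,x-jn)$ satisfies $(\Delta+\partial_t)K_n=0$ by translation-invariance, so the equation holds by linearity, and at $t=0$ the sum reduces to $\sum_j\chi(j)\delta_{jn}=\chi^\flat$.

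For the left-hand side I would substitute the spectral form
\[ K_n(t,x-jn) = \frac{1}{qn}\sum_{\omega=0}^{qn-1}e^{-t4\sin^2(\pi\omega/qn)}e^{2\pi i(x-jn)\omega/qn} \]
into the formula above and exchange the order of summation. Using $e^{-2\pi i jn\omega/qn}=e^{-2\pi ij\omega/q}$, the $j$-dependence collects into the inner character sum $\sum_{j=1}^{q-1}\chi(j)e^{-2\pi ij\omega/q}$. Extending this to a full residue system mod $q$ (harmless since $\chi(0)=0$), the key step is the Gauss-sum separation identity for a primitive character,
\[ \sum_{j \bmod q}\chi(j)e^{-2\pi ij\omega/q}=\overline{\chi(-\omega)}\,\tau(\chi), \]
valid for every $\omega$. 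Since the standing $\chi$ here is real and even, $\overline{\chi(-\omega)}=\chi(\omega)$, so the inner sum collapses to $\chi(\omega)\tau(\chi)$. Reinserting this and dropping the vanishing $\omega=0$ term gives precisely $\frac{\tau(\chi)}{qn}\sum_{\omega=1}^{qn-1}\chi(\omega)e^{-t4\sin^2(\pi\omega/qn)}e^{2\pi ix\omega/qn}$, the left-hand side.

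For the ``in particular'' statement I would set $x=0$ in the identity just established and multiply through by $qn$: the left side becomes $\tau(\chi)\sum_{\omega=1}^{qn-1}\chi(\omega)e^{-t4\sin^2(\pi\omega/qn)}$, while the right side reads $qn\sum_{j=1}^{q-1}\chi(j)K_n(t,-jn)$. The sign in the argument is absorbed by the evenness $K_n(t,-jn)=K_n(t,jn)$, which follows from $\lambda_{qn-\omega}=\lambda_\omega$ under $\omega\mapsto qn-\omega$ in the spectral sum, yielding \eqref{Two expressions of the trace}. The main obstacle is the Gauss-sum identity in the required generality: the clean factorization $\sum_j\chi(j)e^{-2\pi ij\omega/q}=\overline{\chi(-\omega)}\tau(\chi)$ for \emph{all} $\omega$ (including $\gcd(\omega,q)>1$, where both sides vanish) relies crucially on primitivity, and it is this—together with realness and evenness to identify $\overline{\chi(-\omega)}$ with $\chi(\omega)$—that forces the two expressions to agree.
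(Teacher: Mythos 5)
Your proof is correct and uses essentially the same ingredients as the paper's: Fourier analysis on $\ZZ/qn\ZZ$, the Gauss-sum separation identity $\sum_{j \bmod q}\chi(j)e^{-2\pi i j\omega/q}=\overline{\chi(-\omega)}\tau(\chi)$ valid for all $\omega$ by primitivity, realness and evenness of $\chi$, and evenness of $K_n$. The only difference is bookkeeping order — the paper Fourier-transforms the Cauchy problem and solves the resulting ODE before inverting, while you expand $K_n$ spectrally inside the convolution and collapse the character sum — but the key step and the structure of the argument are the same.
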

\begin{proof}
First, taking the Fourier transform of the equation \eqref{Twisted discrete heat equation} leads to the equation 
\begin{equation}
\begin{cases}
(\Psi_n + \partial_t)\widehat{u} = 0 \\
\widehat{u}\big|_{t = 0} = \chi^\sharp,
\end{cases}
\end{equation}
where \( \chi^\sharp = \widehat{\chi^\flat} \) is the Fourier transform of \( \chi^\flat \) and where \( \Psi_n \) is the diagonal operator of multiplication by the spectrum of \( \Delta \), namely
\[ (\Psi_n \rho)(\omega) = 4\sin^2\left(\frac{\pi \omega}{qn}\right) \rho(\omega). \]
One computes for \( \omega \in \ZZ/qn\ZZ \),
\begin{align*}
\chi^\sharp(\omega) & = \sum_{j = 0}^{q-1} \chi(j) \widehat{\delta_{jn}}(\omega) \\
& = \sum_{j = 0}^{q-1}\chi(j)e^{-2 \pi i j\omega/q}.
\end{align*}
But it is well known that when \( \chi \) is primitive, this last sum is \( \overline{\chi(-\omega)} \tau(\chi) \) where \( \tau(\chi) \) is the Gauss sum. Therefore, since \( \chi \) is supposed to be real, even and primitive, one gets 
\[ \widehat{u}(t,\omega) = \tau(\chi)\chi(\omega)e^{-t4\sin^2\left(\frac{\pi \omega}{qn}\right)}. \]
Taking the inverse Fourier transform and discarding the term \( \chi(0) = 0 \) it gives
\begin{equation}
u(t,x) = \frac{\tau(\chi)}{qn}\sum_{\omega = 1}^{qn-1}\chi(\omega)e^{-t4\sin^2\left(\frac{\pi \omega}{qn}\right)}e^{2 \pi i \frac{x \omega}{qn}},
\end{equation}
since we consider \( \ZZ/qn\ZZ \) as discrete, the dual measure (with respect to Fourier) is therefore the compact one on \( \ZZ/qn\ZZ \). Taking the trace of this solution viewed as an operator acting on \( L^2(\ZZ/qn\ZZ) \) by convolution amounts to putting \( x = 0 \) and multiplying by \( qn \), which gives 
\begin{equation}
\Tr (u(t)) = \tau(\chi) \sum_{j = 1}^{qn-1}\chi(j)e^{-t4\sin^2\left(\frac{\pi j}{qn}\right)}.
\end{equation}
Now, as usual, the solution \( u \) is as well given by convolution of the heat kernel with the initial condition \( \chi^\flat \),
\begin{align*}
u(t,x) & = \left(K_n(t) \ast \chi^\flat\right)(x) \\
& = \sum_{j = 0}^{q-1}\chi(j) \cdot \left(K_n(t) \ast \delta_{jn}\right)(x) \\
& = \sum_{j = 0}^{q-1}\chi(j) K_n(t,x-jn).
\end{align*}
The symmetry of the heat kernel \( K_n(t,-x) = K_n(t,x) \) finishes the proof.
\end{proof}

Taking the Mellin transform of the left hand side of \eqref{Two expressions of the trace} gives for \( \re s > 0 \)
\begin{equation*}
\frac{1}{\Gamma(s)}\int_0^{\infty}\Tr(u(t))t^s \frac{\mathrm{d}t}{t} = \tau(\chi)L_n(s,\chi).
\end{equation*}

Note that the formula \eqref{Heat Kernel for covering Z to Z/qnZ} implies
\begin{align*}
\sum_{j=1}^{q-1}\chi(j)K_{n}(t,jn)& =e^{-2t}\sum_{l=-\infty}^{\infty}\sum_{j=1}^{q-1}\chi(j)I_{n(j+lq)}(2t) \\
& =e^{-2t}\sum_{m=-\infty}^{\infty}\chi(m)I_{mn}(2t) \\
&=2e^{-2t}\sum_{m=1}^{\infty}\chi(m)I_{mn}(2t).
\end{align*}

The last inequality is true since $\chi$ is even, $I_{-k}=I_{k}$, and since $\chi(0)=0$.
The idea is now that for any fixed $t>0$, as $n\rightarrow\infty$,
the whole expression tends to $0$, but with the $m=1$ term dominating, giving something positive, since $\chi(1)=1$ and the heat kernel is positive. 

Since $\chi$ takes values in $-1,0,1$ and the $I$-Bessel function
is positive, we have in the worst case:
\[
\sum_{j=1}^{q-1}\chi(j)K_{n}(t,jn)>2e^{-2t}\left(I_{n}(2t)-\sum_{m=2}^{\infty}I_{mn}(2t)\right).
\]

We proceed by comparing $I_{n}$ with $I_{mn}$. In \cite{Am74} one finds that
\[
\frac{I_{k+1}(x)}{I_{k}(x)}\leq\frac{x}{k+\sqrt{k^{2}+x^{2}}}\leq\frac{x}{k+x}
\]
for $k\leq0$ and $x>0$. This leads to
\[
\frac{I_{(m+1)n}}{I_{mn}}\leq\frac{I_{mn+1}}{I_{mn}}\frac{I_{mn+2}}{I_{mn+1}}...\frac{I_{(m+1)n}}{I_{mn+n-1}}\leq\frac{x}{mn+x}\frac{x}{mn+1+x}...\frac{x}{mn+n-1+x}
\]
\[
\leq\left(\frac{x}{n+x}\right)^{n}.
\]
Let $c_{n}(t)=\left(t/(n+t)\right)^{n}<1$. This means that
$$
I_n(t)\geq I_{mn}(t)c_n(t)^{m-1}.
$$
We apply this inequality to our expression:
$$
\sum_{m=1}^{\infty}\chi(m)I_{mn}(2t)>I_n(2t)\left( 1 - \sum_{m=2}^{\infty} c_n(2t)^{m-1}\right).
$$
So the question is now when do we have 
$$
\sum_{m=2}^{\infty} c(2t)^{m-1}=c_n(2t)/(1-c_n(2t))<1.
$$
In other words $c_n(2t)<1/2$. This is equivalent to
\[
(n+2t)^{n}>2^{1+n}t^{n}
\]
and
\[
n+2t>2^{1/n+1}t
\]

\[
n>2t(2^{1/n}-1)=2t(1+\frac{1}{n}\log2+\frac{1}{2}\left(\frac{\log2}{n}\right)^{2}+...-1)>2t\frac{\log2}{n}
\]
In conclusion, whenever $n^2 > 2\log 2 \cdot t$ the heat solution at $x=0$, the right hand side of (\ref{Two expressions of the trace}) is positive. Numerically, for all $0<t<0.72n^2 $ the expression is positive.

We now study large $t$ using instead the spectral expression of the heat solution. In other words, let us consider the other expression of the identity (\ref{Two expressions of the trace}), taking away $\tau(\chi)$.
So we are interested in 
\[
\sum_{j=1}^{qn-1}\chi(j)e^{-t4\sin^{2}(\pi j/qn)}
\]
for all large $t$ and fixed $n$. The idea is again that $j=1$
and $j=qn-1$ should dominate. Using that $\chi$ is even, we have
\[
\sum_{j=1}^{(qn-1)/2}\chi(j)e^{-t4\sin^{2}(\pi j/qn)}>\chi(1)e^{-4t\sin^{2}(\pi/qn)}-\sum_{j=2}^{(qn-1)/2}e^{-t4\sin^{2}(\pi j/qn)}=
\]
\[
=e^{-4t\sin^{2}(\pi/qn)}\left(1-e^{4t\sin^{2}(\pi/qn)}\sum_{j=2}^{(qn-1)/2}e^{-t4\sin^{2}(\pi j/qn)}\right).
\]
Focusing on the sign of the parentheses, we now use the inequalities
$x\frac{2}{\pi}\leq\sin(x)\leq x$ in $0\leq x\leq\pi/2.$ 
\[
1-e^{4t\sin^{2}(\pi/qn)}\sum_{j=2}^{(qn-1)/2}e^{-t4\sin^{2}(\pi j/qn)}>1-e^{4t\pi^{2}/q^{2}n^{2}}\sum_{j=2}^{(qn-1)/2}e^{-4t\pi^{2}j^{2}/q^{2}n^{2}\cdot\left(4/\pi^{2}\right)}>
\]
\[
>1-e^{4t\pi^{2}/q^{2}n^{2}}e^{-16t4/q^{2}n^{2}}\frac{1}{1-e^{-16t5/q^{2}n^{2}}}.
\]

So we want t such that
\[
e^{4t\pi^{2}/q^{2}n^{2}}e^{-16t4/q^{2}n^{2}}<1-e^{-16t5/q^{2}n^{2}}
\]
or equivalently
\[
e^{4t\pi^{2}/q^{2}n^{2}}e^{-16t4/q^{2}n^{2}}+e^{-16t5/q^{2}n^{2}}<1.
\]
It is valid if $t>q^{2}n^{2}\log2/4(16-\pi^{2})\approx0.028q^{2}n^{2}$
or with better numeric $t>0.015q^{2}n^{2}$. Compare this to the above $0.72.$
So we covered all of t as long as $q<7.$ 

In conclusion, this shows that all even, real, primitive character with conductor less than or equal to $7$ are heat positive, and their Dirichlet $L$-function has no real zeros in the unit interval.

\subsection{Further examples of heat positive characters}

With another argument, we show heat positivity without any a priori condition on $q$, but under a stronger hypothesis on the character. 
\begin{definition}[The positive mean property]
Let \( \chi \) be a Dirichlet character, primitive of conductor \( q > 1 \), real and even. We say that \( \chi \) have the \emph{positive mean property} if the following
\[ \sum_{j = 0}^{m}\chi(j) \ge 0, \]
holds for all integer \( 0 \le m \le q/2 \).
\end{definition}
Note that in the literature, such positivity is usually assumed to hold for all $m<q$ in the case of odd characters.

\begin{theorem}\label{thm:positivemean}
    Let \( \chi \) be a Dirichlet character, primitive of conductor \( q > 1 \), real, even and with the positive mean property. Then $\chi $ is heat positive and $L(s,\chi)>0$ for $0<s<1$.
\end{theorem}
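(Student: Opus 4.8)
My plan is to reduce the statement to the positivity of a single cyclic heat sum and then settle that by summation by parts, the only genuine difficulty being a monotonicity (unimodality) property of the heat kernel. By Corollary~\ref{cor:real zeros} it suffices to prove that $\chi$ is heat positive, and I will in fact verify \eqref{eq:heatpositive} for every $n\ge 1$. Since $\chi$ is real, even and primitive I may use $\tau(\chi)=\sqrt q>0$, so the trace identity \eqref{Two expressions of the trace} shows that heat positivity is equivalent to
\[
\Sigma_n(t) \ldef \sum_{j=1}^{q-1}\chi(j)\,K_n(t,jn) \ge 0 \qquad (t\ge 0),
\]
an inequality now expressed purely through the (nonnegative) heat kernel of the cycle $\ZZ/qn\ZZ$.

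Next I would sum by parts against the partial character sums $S_m=\sum_{j=1}^m\chi(j)$, the point being that on the cycle the heat kernel turns around at the antipode, so that the sign of $K_n(t,jn)-K_n(t,(j+1)n)$ tracks the sign of $S_j$. Writing $d_j(t)=K_n(t,jn)-K_n(t,(j+1)n)$ and using $S_0=S_{q-1}=0$ ($\chi$ being non-principal), Abel summation gives $\Sigma_n(t)=\sum_{j=1}^{q-2}S_j\,d_j(t)$. Two symmetries then align: evenness of $\chi$ yields $S_{q-1-j}=-S_j$, while the relation $K_n(t,(q-j)n)=K_n(t,jn)$ yields $d_{q-1-j}=-d_j$, so that $S_j d_j$ is invariant under $j\mapsto q-1-j$ and
\[
\Sigma_n(t)=2\!\!\sum_{1\le j<(q-1)/2}\!\!S_j\,d_j(t),
\]
the central index (for odd $q$) dropping out because $S_{(q-1)/2}=0$. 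For each surviving $j$ one has $j\le q/2$, so the positive mean property gives $S_j\ge 0$, and $jn,(j+1)n\le qn/2$, so $d_j(t)\ge 0$ as soon as $K_n(t,\cdot)$ is non-increasing from $0$ to the antipode. Granting that, every summand is nonnegative, $\Sigma_n(t)\ge 0$, and heat positivity---hence $L(s,\chi)>0$ on $(0,1)$ by Corollary~\ref{cor:real zeros}---follows.

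The whole argument rests on this monotonicity, and that is where I expect the real work: I must show that for every fixed $t>0$ the map $x\mapsto K_n(t,x)$ is non-increasing on $\{0,1,\dots,\lfloor qn/2\rfloor\}$. By the covering formula \eqref{Heat Kernel for covering Z to Z/qnZ}, $K_n(t,\cdot)$ is the $qn$-periodization of the Skellam sequence $k\mapsto e^{-2t}I_k(2t)$, which is symmetric, positive, and log-concave (the Tur\'an inequality $I_k(2t)^2\ge I_{k-1}(2t)I_{k+1}(2t)$), so the claim is a unimodality statement for the cyclic periodization of a symmetric log-concave sequence. The subtlety---and the main obstacle---is that this is genuinely global: the increments $I_k(2t)-I_{k+1}(2t)$ are \emph{not} monotone in $k$ (they rise from $k=0$ to $k=1$), so a term-by-term comparison of the periodized differences fails, as already on $\ZZ/2\ZZ$, where the correct positive value $K_n(t,0)-K_n(t,1)=e^{-4t}$ emerges only after resumming the full alternating series $\sum_k(-1)^kI_k(2t)=e^{-2t}$; indeed mere log-concavity does not suffice, as a periodized discrete box with overlap shows. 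I would therefore establish the monotonicity globally: either by invoking known unimodality of wrapped strongly log-concave (P\'olya-frequency) laws, or by a sign-change/maximum-principle argument---the discrete derivative $w(t,x)=K_n(t,x)-K_n(t,x+1)$ solves the cycle heat equation, is antisymmetric about the edge $x=-\tfrac12$, and starts at $t=0$ with exactly two sign changes around the cycle; showing that the heat semigroup creates no new sign changes pins them at the two antipodal edges and yields the desired unimodality.
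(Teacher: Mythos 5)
Your reduction is correct and is in substance the same as the paper's: via Corollary~\ref{cor:real zeros} and the trace identity \eqref{Two expressions of the trace} (together with Gauss's $\tau(\chi)=\sqrt q>0$), heat positivity follows once $\sum_{j}\chi(j)K_n(t,jn)\ge 0$, and you establish this by Abel summation against the partial sums $S_j$, using the positive mean property for the signs and unimodality of the cyclic heat kernel for the increments. Your symmetry bookkeeping ($S_{q-1-j}=-S_j$, $d_{q-1-j}=-d_j$, vanishing of the central term for odd $q$) is accurate; the paper merely folds the sum to $\{0,\dots,\lfloor q/2\rfloor\}$ \emph{before} summing by parts rather than after, which is immaterial.

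The genuine gap is exactly where you locate "the real work": the monotonicity of $x\mapsto K_n(t,x)$ on $\{0,1,\dots,\lfloor qn/2\rfloor\}$ (the paper's Lemma~\ref{lem:decreasing}) is never proven, only deferred to two sketched strategies, each of which is itself a nontrivial result. Route (a), citing unimodality of wrapped strongly log-concave / P\'olya-frequency laws, needs a precise statement applicable to the discrete cyclic periodization of $k\mapsto e^{-2t}I_k(2t)$; Schoenberg-type cyclic variation-diminishing theory is precisely the hard content here, and as your own box counterexample shows, nothing soft (log-concavity, symmetry, unimodality of the unwrapped kernel) can substitute for it. Route (b), the sign-change argument, hinges on the claim that the semigroup $e^{-t\Delta}$ on the discrete cycle creates no new cyclic sign changes; this is a cyclic total-positivity statement, not the standard Karlin result for the line, and it requires its own proof. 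So as written the argument is incomplete at its acknowledged crux. For comparison, the paper closes this point with a short elementary induction that avoids all variation-diminishing machinery: write $K(t)=e^{-3t}e^{tS}\delta_0$ with $S=3-\Delta$ the lazy random-walk operator, expand $e^{tS}\delta_0=\sum_{m\ge 0}\frac{t^m}{m!}c_m$ where $c_m(x)$ counts lazy paths from $0$ to $x$ of length $m$, and show by induction via $c_{m+1}=Sc_m$ that each $c_m$ is non-increasing in the cyclic distance $h$; positivity and the desired monotonicity of $K(t)$ then follow term by term. Your structure would be complete if you substituted such an argument (or a properly proved variation-diminishing lemma) for the two sketches.
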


This was already proved by Chowla in \cite{Ch36} in a quick way with a clever formula (corresponding to the case that $m(\chi)=2$).

\begin{proof}

Look at the right hand side of \eqref{Two expressions of the trace}, let \( q' \ldef \lfloor q/2 \rfloor \) and use symmetry plus Abel summation to get
\begin{align*}
\sum_{j = 0}^{q-1}\chi(j)K_n(t,jn) & = 2\sum_{j = 0}^{q'}\chi(j)K_n(t,jn) \\
& = 2 \left[\sum_{j = 0}^{q'}\chi(j)\right]K_n(t,q'n) + \\
& - 2 \sum_{j = 0}^{q'-1}\left[\sum_{k = 0}^{j}\chi(k)\right](K_n(t,(j+1)n)-K_n(t,jn)).
\end{align*}
The expressions in bracket \( [.] \) are positive by assumption on \( \chi \), it remains only to show that the heat kernel is positive and decreasing with respect to the distance from \( 0 \) in the Cayley graph of \( \ZZ/qn\ZZ \) with the generating set \( \{ \pm 1 \} \). 

Precisely, let \( N \ge 3 \), and \( \Delta \) be the discrete Laplace operator on \( \ZZ/N\ZZ \). Let also \( K(t) = \exp(-t\Delta)\delta_0 \) the discrete heat kernel on \( \ZZ/N\ZZ \) and define for \( x \in \ZZ/N\ZZ \), \( h(x) \ldef \min\{|x|,|N-x|\} \) to be the distance of \( x \) from \( 0 \) in the Cayley graph of \( \ZZ/N\ZZ \).
The required monotonicity is established in Lemma \ref{lem:decreasing} below.

    Therefore we have proven that the trace \( \Tr (u(t)) \) is positive for all \( t \ge 0 \), but since \( \Gamma(s) > 0 \) for all \( s > 0 \), we conclude that the Mellin transform of \( \Tr (u(t)) \) is also positive in the range \( s > 0 \). This finishes the proof of Theorem \ref{thm:positivemean}, in view of Corollary \ref{cor:real zeros}. The positivity of $\tau(\chi)$ follows from combining this with the positivity established in Theorem \ref{thm:heatpositive}.
\end{proof}

\begin{lemma}\label{lem:decreasing}
With the notations as above, the heat kernel \( K(t) \) is positive and decreasing with respect to \( h \). That is for all \( x,y \in \ZZ/N\ZZ \), we have 
\[ h(x) \le h(y) \Longrightarrow K(t,y) \le K(t,x). \]
\end{lemma}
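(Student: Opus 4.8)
The plan is to prove the two assertions---positivity and monotonicity in $h$---separately, the second being the crux. Positivity is immediate: the operator $-\Delta$ on $\ZZ/N\ZZ$ is \emph{essentially nonnegative} (its off-diagonal entries, equal to $1$ at neighbouring vertices, are $\ge 0$), so the semigroup $e^{-t\Delta}=e^{-2t}\,e^{t(S+S^{-1})}$ has nonnegative entries---indeed strictly positive for $t>0$, by irreducibility. Applying it to $\delta_0$ gives $K(t,x)\ge 0$, with strict positivity for $t>0$; alternatively this is visible from $K(t,x)=e^{-2t}\sum_{j\in\ZZ}I_{x+jN}(2t)>0$. Since $\Delta$ commutes with the reflection $x\mapsto -x$, the kernel is even, $K(t,x)=K(t,N-x)$, so $K(t,\cdot)$ is a function of $h(x)$ alone. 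It therefore suffices to show that $x\mapsto K(t,x)$ is nonincreasing on $\{0,1,\dots,M\}$, where $M=\lfloor N/2\rfloor$.

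For the monotonicity I would introduce the discrete first difference $w(t,x)\ldef K(t,x)-K(t,x+1)$. Because $\Delta$ is translation invariant (a polynomial in the shift $S$ and $S^{-1}$), $w$ again solves the discrete heat equation $\partial_t w=-\Delta w$ on $\ZZ/N\ZZ$; equivalently $w(t,\cdot)=e^{-t\Delta}(\delta_0-\delta_{-1})$. The initial datum $\delta_0-\delta_{-1}$ is \emph{odd} under the reflection $R\colon x\mapsto -1-x$, and since $\Delta$ commutes with $R$ this oddness is preserved for all $t$: $w(t,-1-x)=-w(t,x)$. Using periodicity this also forces $w$ to be odd about the conjugate centre $x\mapsto N-1-x$, so that $w$ changes sign precisely across the two antipodal ``walls'' $x=-\tfrac12$ and $x=M-\tfrac12$ (with a genuine zero at the integer fixed point $x=M$ when $N$ is odd).

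The key step is then to fold the dynamics onto the fundamental segment $H=\{0,1,\dots,M-1\}$. The oddness of $w$ across the two walls expresses the values just outside $H$ as negatives of reflected interior values; substituting these into the heat equation at the two endpoints of $H$ modifies only the \emph{diagonal} coefficients (turning a $-2$ into a $-3$), while all off-diagonal couplings remain equal to $+1\ge 0$. Hence $w|_H$ evolves under a matrix $A$ with nonnegative off-diagonal entries, i.e. a Metzler generator, so that $e^{tA}$ is entrywise nonnegative for every $t\ge 0$. The folded initial datum is simply $\delta_0\ge 0$, the $-\delta_{-1}$ part having been reflected out of $H$; whence $w(t,x)\ge 0$ for all $x\in H$ and $t\ge 0$. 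This says $K(t,0)\ge K(t,1)\ge\cdots\ge K(t,M)$, which is exactly the claimed monotonicity in $h$, and by evenness it extends to all of $\ZZ/N\ZZ$.

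The main obstacle is the boundary bookkeeping in this folding: one must treat the two parities of $N$ (for $N$ odd the right wall sits at the integer $M$, where $w$ is forced to vanish and one has a genuine Dirichlet node, whereas for $N$ even both walls are half-integral) and check in each case that the reflected contributions land only on the diagonal, so that the essential nonnegativity of $A$ is preserved. Once this is secured, the positivity-preserving property of the Metzler semigroup finishes the argument. As a cross-check, the same conclusion can be reached analytically from the Bessel representation above, using that $k\mapsto I_k(2t)$ is positive, even, and decreasing for $k\ge 0$, and pairing the reflected terms of the wrapped sum; this route is more computational, and I expect the semigroup argument to be both cleaner and more robust.
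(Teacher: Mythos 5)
Your argument is correct, and it takes a genuinely different (though cognate) route from the paper's. The paper forces entrywise nonnegativity of the generator by passing to the lazy walk: it writes \( K(t)=e^{-3t}\exp\bigl(t(3-\Delta)\bigr)\delta_0 \), expands \( \exp\bigl(t(3-\Delta)\bigr)\delta_0=\sum_{n\ge 0}\tfrac{t^n}{n!}c_n \), where \( c_n(x) \) counts length-\( n \) lazy paths from \( 0 \) to \( x \) (whence positivity), and proves by induction on \( n \) that each \( c_n \) is decreasing with respect to \( h \): the forward difference obeys \( \partial c_{n+1}=(3-\Delta)\,\partial c_n \), and at the two ends of the half-interval the resulting three-term sum collapses by the evenness of \( c_n \). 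You instead stay in continuous time: the difference \( w(t,\cdot)=K(t,\cdot)-K(t,\cdot+1) \) solves the heat equation with datum \( \delta_0-\delta_{-1} \), odd under \( x\mapsto -1-x \); folding onto \( \{0,\dots,M-1\} \) (your \( M=\lfloor N/2\rfloor \) is the paper's \( N' \)) turns the generator into a Metzler matrix — the reflected terms hit only the diagonal, turning \( -2 \) into \( -3 \), or impose a Dirichlet node at \( x=M \) for odd \( N \) — acting on the folded datum \( \delta_0\ge 0 \), and positivity of Metzler semigroups concludes. These are two implementations of the same discrete maximum principle: the paper's induction step is precisely the time-discretized version of your folded evolution, and both must split the endpoint analysis by the parity of \( N \); your boundary checks, which you flag as the main obstacle, do go through exactly as you describe (note only that the right-end diagonal becomes \( -3 \) only when \( N \) is even, while for odd \( N \) it stays \( -2 \) with \( w(t,M)=0 \), as you say). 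What each buys: the paper's proof is elementary and self-contained — path counting plus induction, no semigroup theory — whereas yours isolates the structural mechanism (reflection antisymmetry plus positivity preservation), needs no lazy-walk device since essential nonnegativity already gives \( e^{tA}\ge 0 \), and yields strict positivity for free. One cosmetic caution if this were merged into the paper: your \( S \) denotes the shift operator, while the paper uses \( S \) for \( 3-\Delta \), so one of the two should be renamed.
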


\begin{proof}
Consider the operator \( S \ldef 3-\Delta \), defined by \( (Sf)(x) \ldef f(x-1)+f(x)+f(x+1) \), which is the generator of the random walk on \( \ZZ/N\ZZ \) allowing the walk to stand still. Let us also consider \( \partial \), the forward difference operator, defined by \( (\partial f)(x) \ldef f(x+1)-f(x) \). Then \( K(t) = e^{-3t}\exp(tS)\delta_0 \), and therefore we need only to analyze \( \phi(t) \ldef \exp(tS)\delta_0 \). But clearly, 
\[ \phi(t,x) = \sum_{n \ge 0}\frac{t^n}{n!}c_n(x), \]
where 
\[ c_n(x) = \#\{\text{paths from } 0 \text{ to } x \text{ of length } n \text{, allowing the path to stand still} \}, \]
which shows that \( \phi(t) \) is positive, but most importantly, the function \( c_n \) satisfy the recurrence relation 
\[ c_{n+1} = Sc_n \]
for all \( n \ge 0 \). Therefore, for all \( n \ge 0 \), we have 
\[ \partial c_{n+1} = S\partial c_n. \]
Now, for \( n = 0 \), we have \( c_0 = \delta_0 \) which indeed is decreasing with respect to the height \( h \). By induction, suppose that \( c_n \) is decreasing with respect to the height \( h \). It means exactly that \( \partial c_n (x) \le 0 \) for all \( 0 \le x < N' \), where \( N' \ldef \lfloor N/2 \rfloor \), now look at 
\begin{align*}
c_{n+1}(x+1)-c_{n+1}(x) & = (c_n(x)-c_n(x-1)) \\
& + (c_n(x+1)-c_n(x)) \\
& + (c_n(x+2)-c_n(x+1)).
\end{align*}
If \( 0<x<N'-1 \), then every term on the right is negative by hypothesis on \( c_n \), if \( x = 0 \), then the right side is reduce to \( c_n(2)-c_n(1) \) which is negative and if \( x = N'-1 \), then the right side is reduce to \( c_n(N'-1)-c_n(N'-2) \) which is also negative. We have proven that \( \partial c_{n+1} (x) \le 0 \) for all \( 0 \le x < N' \), which conclude the induction and therefore the proof that \( \phi(t) \) is decreasing with respect to \( h \), so is \( K(t) \).
\end{proof}

%%%%%%%%%%%%%%%%%%%%%%%%%%%%%%%%%%%%%%%%%%%%%%%%%%%%%%%%%%%%%%%%%%%%%%%%%%%%%%%%%%%%
\section{Special values}\label{sec:special}
In this section, we prove Theorem \ref{Theorem:trig}, and by doing so we link special values of both discrete zeta functions and discrete \( L \)-functions with special values of the Riemann zeta function and Dirichlet \( L \)-functions, respectively. For this, we first use the Laplace transform of the discrete heat kernel on the \( \theta \)-bundle over \( \ZZ/n\ZZ \) together with a precise expression of the characteristic polynomial of the bundle Laplacian \( \Delta_{n,\theta} \) (namely, the Chebyshev polynomials) to obtain a polynomial behavior of the discrete special value. Combining this polynomial behavior with the asymptotics of Theorem \ref{Asymptotic for the bundle Laplacian}, we obtain closed formulas for the zeta functions associated. By using standard formulas for \( L \)-functions and specializing \( \theta \) accordingly, we obtain the desired relation between discrete and continuous zeta and \( L \)-functions.

\subsection{Formulas}
In terms of discrete zeta function on the \( \theta \)-bundle, Theorem \ref{Theorem:trig} says that for any \( 0 < \theta < 1 \) and integers \( n,m \ge 1 \), we have
\begin{equation}\label{Even part of Theorem 1}
\zeta_{\ZZ/n\ZZ}(m,\theta) = \sum_{k = 0}^m a_{m-k}(m)\zeta_{\RR/\ZZ}(k,\theta)n^{2k}.
\end{equation}

As above, the coefficients \( m \mapsto a_k(m) \) are polynomial functions of degree \( k \), given by the Taylor expansion 
\[ \left(\frac{z/2}{\sin z/2}\right)^{2s} = \sum_{k \ge 0} a_k(s) z^{2k}. \]

The Theorem \ref{Theorem:trig} leads to the formulas in Corollary \ref{Corollaryspecial}, linking discrete and classical special values for zeta and Dirichlet \( L \)-functions. They were first obtained by Xie, Zhao, and Zhao in the case \( n = 1 \).  

Moreover, one can deduced the case of \( \theta = 0 \) as well, which has a long history and is connected to Verlinde formulas, see for example \cite[Theorem 1.iii]{Zag96}. Here we put large emphasis on spectral zeta function involved in this formula. Namely,

\begin{corollary}\label{Corollary no character}
For any integers \( n,m \ge 1 \), the following holds:
\[ \zeta_{\ZZ/n\ZZ}(m) = \sum_{k = 0}^{m}a_{m-k}(m)\zeta_{\RR/\ZZ}(k)n^{2k}. \]
\end{corollary}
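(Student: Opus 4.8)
The plan is to deduce the $\theta=0$ identity from the $\theta$-twisted formula \eqref{Even part of Theorem 1}, which I take as given, by letting $\theta\to 0$ after subtracting matching singularities from both sides. The precise device is the regularization relation recorded above, namely $\zeta_{\RR/\ZZ}(k)=\lim_{\theta\to0}\bigl(\zeta_{\RR/\ZZ}(k,\theta)-(2\pi\theta)^{-2k}\bigr)$, which I will use for every integer $k\ge 0$; for $k\ge 1$ it is the stated limit, and for $k=0$ it reads $-1=\lim_{\theta\to0}(0-1)$, since $\zeta_{\RR/\ZZ}(0,\theta)=0$ while $\zeta_{\RR/\ZZ}(0)=-1$.

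First I would isolate the term $j=0$ in $\zeta_{\ZZ/n\ZZ}(p,\theta)=\sum_{j=0}^{n-1}[4\sin^2(\pi(j+\theta)/n)]^{-p}$. That single term, $[4\sin^2(\pi\theta/n)]^{-p}$, is the only one that diverges as $\theta\to0$, while the remaining partial sum $\sum_{j=1}^{n-1}[4\sin^2(\pi(j+\theta)/n)]^{-p}$ converges to $\zeta_{\ZZ/n\ZZ}(p)$. On the right-hand side of \eqref{Even part of Theorem 1} the divergence sits in the factors $\zeta_{\RR/\ZZ}(k,\theta)$, each of which behaves like $(2\pi\theta)^{-2k}$. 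The crux is that these two sources of divergence cancel term by term. Indeed, expanding the isolated term with $z=\theta/n$ via the Laurent series $[4\sin^2(\pi z)]^{-p}=\sum_{k\ge0}a_k(p)(2\pi z)^{2(k-p)}$ and reindexing gives
\[
[4\sin^2(\pi\theta/n)]^{-p}=\sum_{k=0}^{p}a_{p-k}(p)\,(2\pi\theta)^{-2k}\,n^{2k}+o(1)\qquad(\theta\to0),
\]
whose singular and constant coefficients are exactly the $a_{p-k}(p)$ occurring in \eqref{Even part of Theorem 1}, because the coefficient of $(2\pi\theta)^{-2k}$ inside $\zeta_{\RR/\ZZ}(k,\theta)$ is $1$.

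Subtracting the isolated term from both sides of \eqref{Even part of Theorem 1} and grouping by the index $k$, each summand becomes $a_{p-k}(p)\bigl[\zeta_{\RR/\ZZ}(k,\theta)-(2\pi\theta)^{-2k}\bigr]n^{2k}$, and letting $\theta\to0$ termwise — the sum is finite and $n$ is fixed — turns every summand into $a_{p-k}(p)\zeta_{\RR/\ZZ}(k)n^{2k}$ by the regularization relation. This yields $\zeta_{\ZZ/n\ZZ}(p)=\sum_{k=0}^p a_{p-k}(p)\zeta_{\RR/\ZZ}(k)n^{2k}$, as claimed.

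The step I expect to be the main obstacle is the bookkeeping of the cancellation of singularities — verifying that the Laurent coefficients of the $j=0$ term agree with the polar coefficients of the $\zeta_{\RR/\ZZ}(k,\theta)$, and handling the boundary index $k=0$, where the continuous value $\zeta_{\RR/\ZZ}(0)=-1$ differs from the twisted value $\zeta_{\RR/\ZZ}(0,\theta)=0$; there the discrepancy is absorbed precisely by the constant term of the isolated Laurent expansion. Alternatively, one could bypass \eqref{Even part of Theorem 1} and rerun the asymptotic-to-exact argument directly at $\theta=0$: in Theorem~\ref{Asymptotic in the case d=1} the term $n\zeta_\ZZ(p)$ vanishes because $\zeta_\ZZ$ has trivial zeros at positive integers, the tail terms $\zeta_{\RR/\ZZ}(p-k)$ vanish for $k>p$ by the trivial zeros of $\zeta$, and matching the resulting finite expansion against the polynomiality of $n\mapsto\zeta_{\ZZ/n\ZZ}(p)$ forces equality; but the limiting argument above is the more economical route given \eqref{Even part of Theorem 1}.
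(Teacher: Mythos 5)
Your proposal is correct and follows essentially the same route as the paper's own proof: isolate the $j=0$ term in \eqref{Even part of Theorem 1}, expand it via the Laurent series of $[4\sin^2(\pi z)]^{-p}$ so that its singular part matches the polar parts of the $\zeta_{\RR/\ZZ}(k,\theta)$, regroup as $a_{p-k}(p)\bigl[\zeta_{\RR/\ZZ}(k,\theta)-(2\pi\theta)^{-2k}\bigr]n^{2k}$, and let $\theta\to0$ using the regularization relation. Your explicit treatment of the boundary index $k=0$ (where $\zeta_{\RR/\ZZ}(0,\theta)=0$ but $\zeta_{\RR/\ZZ}(0)=-1$) is a welcome detail that the paper leaves implicit.
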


\subsection{Characteristic polynomial of the bundle Laplacian.}
Recall from \ref{subsec:DiscreteBundle} that \( \Delta_{n,\theta} \) is the bundle Laplacian on \( \ZZ/n\ZZ \) associated to the parameter \( \theta \in \RR \) defined by
\[ (\Delta_{n,\theta}f)(x) \ldef 2f(x)-e^{2\pi i \theta/n}f(x+1)-e^{-2\pi i \theta/n}f(x-1), \]
on functions \( f \colon \ZZ/n\ZZ \rightarrow \CC \).

The characteristic polynomial of \( \Delta_{n,\theta} \) is 
\[ P_{n,\theta}(x) = \det(\Delta_{n,\theta}-x) = \prod_{j = 0}^{n-1}\left[4\sin^2\left(\frac{\pi (j+\theta)}{n}\right)-x\right], \]
and in fact it can be expressed in terms of Chebyshev polynomials.
\begin{lemma}\label{Lemma Chebychev and characteristic polynomial}
Let \( T_n \) be the \( n \)-th Chebyshev polynomial of the first kind. Then the characteristic polynomial of \( \Delta_{n,\theta} \) satisfies 
\begin{equation}
T_n(1-2x)-\cos(2 \pi \theta) = \frac
12 P_{n,\theta}(4x).
\end{equation}
\end{lemma}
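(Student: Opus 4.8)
The plan is to treat both sides as polynomials in $x$ of degree $n$ and to match them by comparing leading coefficients and roots. Recall the eigenvalues $\lambda_j = 4\sin^2(\pi(j+\theta)/n)$, so that $\tfrac12 P_{n,\theta}(4x) = \tfrac12\prod_{j=0}^{n-1}(\lambda_j - 4x)$ has leading term $\tfrac12(-4)^n x^n = (-1)^n 2^{2n-1}x^n$ and vanishes exactly at the points $x_j \ldef \sin^2(\pi(j+\theta)/n)$. Since $T_n$ has degree $n$ with leading coefficient $2^{n-1}$, the polynomial $T_n(1-2x) - \cos(2\pi\theta)$ likewise has degree $n$ with leading term $2^{n-1}(-2x)^n = (-1)^n 2^{2n-1}x^n$, so the leading coefficients already agree. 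To check that each $x_j$ is a root of the left-hand side, write $\alpha_j = 2\pi(j+\theta)/n$; the double-angle identity gives $1 - 2x_j = \cos\alpha_j$, and the defining relation $T_n(\cos\alpha) = \cos(n\alpha)$ yields $T_n(1-2x_j) = \cos(n\alpha_j) = \cos(2\pi(j+\theta)) = \cos(2\pi\theta)$ since $j \in \ZZ$. Hence $T_n(1-2x_j) - \cos(2\pi\theta) = 0$, and two degree-$n$ polynomials with the same leading coefficient and the same $n$ roots coincide.

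The main obstacle is the qualifier \emph{with multiplicity}: for generic $\theta$ the $n$ values $x_j$ are distinct, but for special $\theta$ (for instance $\theta \in \tfrac12\ZZ$, or whenever $j+j'+2\theta \equiv 0 \pmod n$ for some $j \ne j'$) some of them collide, and the root count alone no longer forces equality. I would resolve this either by a continuity argument — the coefficients of both sides are trigonometric polynomials in $\theta$, so proving the identity on the dense set where the $x_j$ are distinct extends it to all $\theta$ — or, more cleanly, by invoking the classical product identity $\cos(nz) - \cos(n\phi) = 2^{n-1}\prod_{k=0}^{n-1}\bigl(\cos z - \cos(\phi + 2\pi k/n)\bigr)$, which sidesteps multiplicities entirely.

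To implement the second option I would set $\phi = 2\pi\theta/n$ and substitute $\cos z = 1 - 2x$, so that $x = \sin^2(z/2)$ runs over infinitely many values and a genuine polynomial identity in $x$ can be read off. The left-hand side becomes $T_n(1-2x) - \cos(2\pi\theta)$, while each factor on the right becomes $\cos z - \cos(2\pi(k+\theta)/n) = \tfrac12\bigl(4\sin^2(\pi(k+\theta)/n) - 4x\bigr)$; collecting the prefactor $2^{n-1}\cdot 2^{-n} = \tfrac12$ reproduces exactly $\tfrac12 P_{n,\theta}(4x)$. The only remaining item is then to recall (or re-derive, by the same leading-coefficient-plus-roots comparison applied to the variable $\cos z$) the product identity itself, which I expect to be the one genuinely nonroutine step in an otherwise mechanical verification.
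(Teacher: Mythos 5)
Your proposal is correct and follows essentially the same route as the paper: both identify the roots $\sin^2\left(\pi(j+\theta)/n\right)$ of $T_n(1-2x)-\cos(2\pi\theta)$ via $T_n(\cos\alpha)=\cos(n\alpha)$, match the leading coefficient $(-1)^n 2^{2n-1}$, and dispose of the degenerate values of $\theta$ where roots collide (the paper: $\theta\in\tfrac12\ZZ$) by continuity of $\theta \mapsto Q_{n,\theta}$. Your alternative via the classical product formula for $\cos(nz)-\cos(n\phi)$ is, as you note yourself, a repackaging of the same leading-coefficient-plus-roots argument rather than a genuinely different proof.
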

\begin{proof}
Recall that \( T_n \) satisfies 
\[ T_n(\cos(x)) = \cos(nx), \]
and therefore the polynomial 
\[ Q_{n,\theta}(x) \ldef T_n(1-2x)-\cos(2 \pi \theta) \]
vanishes at the numbers 
\[ \mu_j \ldef \sin^2\left(\frac{\pi(j+\theta)}{n}\right). \]
If \( \theta \notin \frac{1}{2}\ZZ \), then all the \( \mu_j \) are distinct and therefore 
\[ Q_{n,\theta}(x) = \frac{1}{2}\prod_{j=0}^{n-1}\left[4\sin^2\left(\frac{\pi(j+\theta)}{n}\right)-4x\right], \]
since \( T_n(x) \) is of degree \( n \) and has leading coefficient \( 2^{n-1} \). By continuity of the map \( \theta \mapsto Q_{n,\theta}(x) \), this last factorization of \( Q_{n,\theta} \) holds for any \( \theta \in \RR/\ZZ \). This implies the desired result.
\end{proof}
\begin{remark}
The expansion of the Chebyshev polynomial \(T_n\) near \(z=1\) (see p.~779 of \cite{AS64}), combined with Lemma~\ref{Lemma Chebychev and characteristic polynomial}, gives
\begin{equation}\label{Expansion at z = 1 of T_n}
\det(x+\Delta_{n,\theta})
=
2(1-\cos(2\pi \theta))
+
2n \sum_{k = 1}^{n}
\frac{(n+k-1)!}{(n-k)!(2k)!}x^k .
\end{equation}

This formula admits a natural combinatorial interpretation. When \(\theta=0\), we recover the coefficients of the characteristic polynomial of the usual combinatorial Laplacian \(\Delta_n\) on the cycle graph \(\ZZ/n\ZZ\). By the rooted-forest version of the matrix-tree theorem, see \cite{KC74,CL96}, the coefficient of \(x^k\) in \(\det(x+\Delta_n)\) is the number of rooted spanning forests of \(\ZZ/n\ZZ\) with \(k\) connected components.

There is also a version of the matrix-tree theorem for bundle graphs and therefore in the case of a general \( \theta \), see \cite{Fo93,Ke11,BPT15}. In this case, spanning forests are replaced by mixed spanning forests, namely spanning subgraphs whose connected components are either rooted trees or unicyclic components (not rooted). The unicyclic components are counted with respect to their monodromy. We refer to \cite[Theorem~3.2]{BPT15} for the precise statement.

It is clear, however, that in the particular case of the cycle graph \(\ZZ/n\ZZ\), there is only one cycle, namely the whole graph. Its monodromy is \(e^{2\pi i\theta}\), and therefore the unique cyclic contribution is given by
\[
(1-e^{2\pi i\theta})(1-e^{-2\pi i\theta})
=
2(1-\cos(2\pi\theta)).
\]
This is exactly the constant term in \eqref{Expansion at z = 1 of T_n}. All the higher coefficients come from rooted spanning forests of \( \ZZ/n\ZZ \), and hence are independent of \( \theta \).
\end{remark}
 
From this combinatorial perspective, the special values \( \zeta_{\ZZ/n\ZZ}(k,\theta) \) for \( k \in \ZZ \) can be expressed in terms of the number of rooted spanning forests on \( \ZZ/n\ZZ \), since they are symmetric expressions of the eigenvalues of \( \Delta_{n,\theta} \). Combining this observation with the asymptotics of \Cref{Asymptotic in the case d=1} lead to evaluations of the special values \( \zeta_{\RR/\ZZ}(k,\theta) \). Let us work through the special case of \( \zeta(2) \).

\begin{example}
When \( \theta = 0 \) write 
\[ \det(x+\Delta_n) = \sum_{j = 1}^{n}a_j(n)x^j, \]
and therefore, for \( s = 1 \),
\[ \zeta_{\ZZ/n\ZZ}(1) = \sum_{j = 1}^{n-1}\frac{1}{4\sin^2\left(\tfrac{\pi j}{n}\right)} = \frac{a_2(n)}{a_1(n)}. \]
But looking at the asymptotics of \Cref{Asymptotic in the case d=1} in the special case \( s = 1 \) and keeping in mind the trivial zeros of both \( \zeta \) and \( \zeta_\ZZ \) we obtain 
\[ \frac{a_2(n)}{a_1(n)} = n^2 \cdot \frac{2}{(2\pi)^2}\zeta(2) + \frac{2}{12}\zeta(0) + \mathcal{O}(n^{-m}),  \]
for any \( m \ge 0 \). Therefore, using the polynomiality of \( a_j \) we conclude that the asymptotics must stop and in fact is an exact formula, that is, for all \( n \ge 1 \)
\[ \frac{a_2(n)}{a_1(n)} = \zeta_{\ZZ/n\ZZ}(1) = n^2 \cdot \frac{2}{(2\pi)^2}\zeta(2) + \frac{2}{12}\zeta(0). \]
Now, take \( n = 2 \) and count the rooted spanning trees and the rooted spanning $2$-forests of \( \ZZ/2\ZZ \) as in Fig.~\ref{fig:spanning} to obtain 
\[ \frac{1}{4} = 2^2 \cdot \frac{2}{(2\pi)^2}\zeta(2) + \frac{2}{12}\left(-\frac{1}{2}\right), \]
which implies \( \zeta(2) = \tfrac{\pi^2}{6} \). A similar calculation holds for the values \( \zeta(2k) \), \( k \ge 0 \), indeed for \( \zeta_{\RR/\ZZ}(k,\theta) \) as well. In particular, the value of \( \zeta(0) \) can also be deduced in this way, even directly by looking at the asymptotics of \Cref{Asymptotic in the case d=1} for \( s = 0 \) : 
\[ n-1 = n\zeta_\ZZ(0) + 2\zeta(0) + \mathcal{O}(n^{-2}). \]
\end{example}
\begin{remark}
The polynomial nature of the \( a_j \) is an important feature of the previous calculation, since it is what allows us to assert that the asymptotic expansion of \Cref{Asymptotic in the case d=1} stops when \( s = 1 \). For the purpose of generalizing this phenomenon to \( s \in \ZZ \), let us define
\[
\mathcal P_{\theta}(n,x)
\coloneqq
\det(x+\Delta_{n,\theta}),
\]
and observe that \eqref{Expansion at z = 1 of T_n} implies
\[
\mathcal P_\theta \in (\CC[n])[[x]],
\]
which is invertible in this ring if and only if its constant term is nonzero. That is, if and only if
\[
2(1-\cos(2\pi\theta))\neq 0,
\]
or equivalently \(\theta\notin\ZZ\). This observation will be central to show that the asymptotics of \Cref{Asymptotic in the case d=1} stops when \( s \in \ZZ \).
\end{remark}
\subsection{Laplace transform of the bundle heat kernel}
The heat kernel of the bundle Laplacian \( \Delta_{n,\theta} \) on \( \ZZ/n\ZZ \) is defined to be the one-parameter family of operators
\[ K_{n,\theta}(t) \ldef e^{-t\Delta_{n,\theta}}, \]
with trace  
\[ \Tr[K_{n,\theta}(t)] = \sum_{j=0}^{n-1}e^{-t4\sin^2\left(\frac{\pi (j+\theta)}{n}\right)}. \]
Therefore its Laplace transform is given for \( \re s < 0 \) by
\begin{align*}
\mathcal{L}[\Tr[K_{n,\theta}]](s) & = \int_0^{\infty}e^{st}\Tr[K_{n,\theta}(t)]\mathrm{d}t \\
& = \sum_{j = 0}^{n-1} \int_0^{+\infty}e^{-t(4\sin^2\left(\frac{\pi (j+\theta)}{n}\right)-s)}\mathrm{d}t \\
& = \sum_{j = 0}^{n-1} \frac{1}{4\sin^2\left(\frac{\pi (j+\theta)}{n}\right)-s} \\
& = -\frac{\mathrm{d}}{\mathrm{d}s}\log [P_{n,\theta}(s)] \\
& = \frac{1}{2}\frac{T_n'(1-s/2)}{T_n(1-s/2)-\cos(2 \pi \theta)} \\
& = \frac{1}{2}\frac{nU_{n-1}(1-s/2)}{T_n(1-s/2)-\cos(2 \pi \theta)}.
\end{align*}
where we used that \( T_n' = nU_{n-1} \), with \( U_n \) the \( n \)-th Chebyshev polynomial of the second kind. If \( \theta \notin \ZZ \), then 
\[ \sum_{j=0}^{n-1}\frac{1}{4\sin^2\left(\frac{\pi (j+\theta)}{n}\right)-s} = \sum_{m \ge 0}s^m\sum_{j=0}^{n-1}\left[4\sin^2\left(\frac{\pi (j+\theta)}{n}\right)\right]^{-m-1}. \]
In fact, we have just shown:
\begin{lemma}\label{lemma 2}
If \( 0 < \theta < 1 \), the following equality holds:
\begin{equation}
\mathcal{L}[\Tr[K_{n,\theta}]](s) = \sum_{m \ge 0}s^m\zeta_n(m+1,\theta) = \frac{1}{2}\frac{nU_{n-1}(1-s/2)}{T_n(1-s/2)-\cos(2 \pi \theta)},
\end{equation}
in a non-trivial interval \( - \varepsilon_{\theta} <  s < 0 \), for some \( \varepsilon_{\theta} > 0 \).
\end{lemma}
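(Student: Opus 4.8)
The plan is to extract the lemma from the chain of equalities already established for the Laplace transform, supplying only the two points that the statement adds: that the expansion in $s$ is legitimate on a genuine interval, and that its coefficients are the spectral zeta values $\zeta_n(p+1,\theta)$.

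First I would record that, for $\re s < 0$, the computation preceding the statement gives
\[
\mathcal{L}[\Tr[K_{n,\theta}]](s) = \sum_{j=0}^{n-1}\frac{1}{\lambda_j - s}, \qquad \lambda_j \ldef 4\sin^2\!\left(\frac{\pi(j+\theta)}{n}\right),
\]
and that this finite sum of simple poles equals $-\tfrac{d}{ds}\log P_{n,\theta}(s)$, hence the Chebyshev closed form $\tfrac12\, nU_{n-1}(1-s/2)\big/\bigl(T_n(1-s/2)-\cos(2\pi\theta)\bigr)$, via Lemma \ref{Lemma Chebychev and characteristic polynomial} together with $T_n' = nU_{n-1}$. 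This already yields the right-hand equality of the lemma.

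Next I would identify the middle expression as the Taylor expansion of this function about $s=0$. Since $0<\theta<1$, each $j+\theta$ lies strictly in $(0,n)$ and is therefore never a multiple of $n$, so every eigenvalue $\lambda_j$ is strictly positive; I set $\lambda_{\min}\ldef\min_j\lambda_j>0$ and $\varepsilon_\theta\ldef\lambda_{\min}$. For $|s|<\varepsilon_\theta$ I expand each summand geometrically,
\[
\frac{1}{\lambda_j - s} = \sum_{p\ge 0}\frac{s^p}{\lambda_j^{\,p+1}},
\]
and, the sum over $j$ being finite, the interchange with the series over $p$ is immediate. Collecting powers of $s$ produces $\sum_{p\ge 0}s^p\sum_{j=0}^{n-1}\lambda_j^{-(p+1)} = \sum_{p\ge 0}s^p\,\zeta_n(p+1,\theta)$, which is the stated middle term. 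All three expressions then agree on the overlap $-\varepsilon_\theta<s<0$, nonempty because $\varepsilon_\theta>0$.

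I do not expect a real obstacle: the only analytic content is that a finite sum of simple poles at positive locations expands geometrically inside the nearest one. The single point deserving care is confirming $\lambda_{\min}>0$ — equivalently that $\theta\notin\ZZ$ excludes a zero eigenvalue — since this is exactly what makes the interval $-\varepsilon_\theta<s<0$ nontrivial and distinguishes the twisted case $\theta\neq 0$ from the untwisted Laplacian.
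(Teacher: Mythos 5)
Your proposal is correct and follows essentially the same route as the paper: the Laplace transform is written as the finite sum of resolvent terms $\sum_j (\lambda_j - s)^{-1}$, identified with $-\frac{d}{ds}\log P_{n,\theta}(s)$ and hence with the Chebyshev closed form via Lemma~\ref{Lemma Chebychev and characteristic polynomial} and $T_n' = nU_{n-1}$, then expanded geometrically near $s=0$ using that all eigenvalues are strictly positive when $\theta\notin\ZZ$. Your only addition is making the radius explicit, $\varepsilon_\theta = \min_j \lambda_j$, a point the paper leaves implicit.
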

As a result, we obtain the polynomial behavior of the special value of \( \zeta_{\ZZ/n\ZZ}(m,\theta) \).
\begin{corollary}\label{Polynomial behaviour}
For any \( 0 < \theta < 1 \) and for any integer \( m \in \ZZ \), the map
\[ n \mapsto \zeta_n(m,\theta) \]
is a polynomial function on the domain \( \{ n \in \NN \mid n > -m \} \).
\end{corollary}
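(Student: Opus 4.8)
The plan is to read off $\zeta_n(p,\theta)$ as a Taylor coefficient at $s=0$ of the logarithmic derivative of the characteristic polynomial $P_{n,\theta}$, and then to exploit the fact, recorded in the Remark following Lemma~\ref{Lemma Chebychev and characteristic polynomial}, that $P_{n,\theta}$ is a unit in the ring $(\CC[n])[[s]]$ of formal power series in $s$ whose coefficients are polynomials in $n$; this forces every such Taylor coefficient to be a polynomial in $n$. Concretely, by Lemma~\ref{lemma 2}, for $0<\theta<1$ and $s$ in a small interval $-\varepsilon_\theta<s<0$,
\[
\sum_{p\ge 0}s^{p}\,\zeta_n(p+1,\theta)=\mathcal{L}[\Tr[K_{n,\theta}]](s)=-\frac{\partial_s P_{n,\theta}(s)}{P_{n,\theta}(s)},
\]
so that $\zeta_n(p,\theta)$ is precisely the coefficient of $s^{p-1}$ in the Taylor expansion at $s=0$ of $-\partial_s\log P_{n,\theta}(s)$. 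Thus it suffices to show that every Taylor coefficient of this logarithmic derivative is polynomial in $n$.

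The key step is to compute formally over $\CC[n]$. By the expansion~\eqref{Expansion at z = 1 of T_n}, each coefficient of $s^{k}$ in $P_{n,\theta}(s)$ is a polynomial in $n$, the factorial ratio $(n+k-1)!/(n-k)!$ being the polynomial $(n+k-1)(n+k-2)\cdots(n-k+1)$, while the constant term $P_{n,\theta}(0)=2(1-\cos 2\pi\theta)$ is a nonzero constant independent of $n$ because $0<\theta<1$. Hence $P_{n,\theta}$ is invertible in $(\CC[n])[[s]]$, its inverse again has coefficients in $\CC[n]$, and therefore $-\partial_s P_{n,\theta}\cdot P_{n,\theta}^{-1}$ lies in $(\CC[n])[[s]]$ as well. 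Reading off the coefficient of $s^{p-1}$ yields a well-defined polynomial $c_{p}(n)\in\CC[n]$.

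Finally I would reconcile the formal and analytic pictures. For each fixed positive integer $n$, substituting it into the formal series for $P_{n,\theta}$ recovers the honest characteristic polynomial: the polynomial $(n+k-1)\cdots(n-k+1)$ vanishes at every integer $n$ with $k>n$ (one of its factors becomes $0$), so the series truncates to the genuine degree-$n$ polynomial. Consequently the formal computation of the logarithmic derivative, evaluated at this $n$, produces exactly the Taylor coefficients of the analytic function $-\partial_s\log P_{n,\theta}(s)$, which by Lemma~\ref{lemma 2} and uniqueness of Taylor coefficients equal the $\zeta_n(p+1,\theta)$. Therefore $\zeta_n(p,\theta)=c_{p}(n)$ for all integers $n\ge 1$, i.e.\ $n\mapsto\zeta_n(p,\theta)$ is a polynomial.

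The main obstacle is precisely this last reconciliation: one must guarantee that the universal polynomial $c_p(n)$, obtained by inverting $P_{n,\theta}$ in $(\CC[n])[[s]]$, genuinely agrees at each integer $n$ with the analytic spectral sum, rather than being an artifact of the formal manipulation. This is secured by the invertibility of $P_{n,\theta}$ (its constant term being a nonzero constant) together with the truncation of the $n$-dependent sum in~\eqref{Expansion at z = 1 of T_n}; once these two points are in place, the remainder is a routine transfer between $(\CC[n])[[s]]$ and $\CC[[s]]$.
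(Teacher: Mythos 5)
Your proof is correct and takes essentially the same route as the paper: both rest on Lemma~\ref{lemma 2} together with the expansion \eqref{Expansion at z = 1 of T_n}, viewing the characteristic polynomial as an invertible element of \( (\CC[n])[[s]] \) (its constant term \( 2(1-\cos 2\pi\theta) \) being a nonzero constant) so that every coefficient of the logarithmic derivative lies in \( \CC[n] \). The only differences are cosmetic: you work directly with \( -\partial_s P_{n,\theta}/P_{n,\theta} \) instead of the equivalent Chebyshev form \( nU_{n-1}(1-s/2)/\bigl(2(T_n(1-s/2)-\cos 2\pi\theta)\bigr) \), and you spell out the formal-to-analytic evaluation step (the truncation of the factorial-ratio polynomials at integer \( n \)) that the paper leaves implicit.
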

\begin{proof}
The case of positive \( m \ge 1 \) is a direct consequence of Lemma \ref{lemma 2}. Indeed, for each \( n \ge 1 \) the generating function of the numbers \( \zeta_n(m,\theta) \), \( m \ge 1 \) converges for some \( s \neq 0 \) and satisfies 
\[ \sum_{m \ge 1}s^m\zeta_n(m,\theta) = s\cdot \frac{nU_{n-1}(1-s/2)}{2(T_n(1-s/2)-\cos(2 \pi \theta))}. \]
By the equation \eqref{Expansion at z = 1 of T_n}, both the numerator and the denominator of the right hand side are in \( (\CC[n])[[s]] \), with the denominator invertible in this ring since \( T_n(1) = 1 \).

For non-positive \( -m \le 0 \), let \( n > m \) and expand the sine function into exponential 
\begin{align*}
    \zeta_n(m,\theta) & = 4^m\sum_{j = 0}^{n-1}\left[\sin\left(\frac{\pi(j+\theta)}{n}\right)\right]^{2m} \\
    & = \sum_{k=-r}^r\binom{2m}{m+k}(-1)^ke^{2\pi\theta k/n}\sum_{j=0}^{n-1}e^{2\pi jk/n} \\ 
    & = n\binom{2m}{m}.
\end{align*}
\end{proof}
Observe that in the last calculation, we have explicitly shown that for \( -m \le 0 \),
\begin{equation*}
    \zeta_{\ZZ/n\ZZ}(-m,\theta) = n\zeta_{\ZZ}(-m),
\end{equation*} 
for all \( 0 < \theta < 1 \), and all \( n > m \).

\subsection{Proof of the theorems and their corollaries}

First recall from the asymptotic of Theorem \ref{Asymptotic for the bundle Laplacian} that when \( s \in \CC \) is not a half integer, \( 0 < \theta < 1 \), \( l \ge 0 \), the following holds as \( n \to \infty \);
\begin{equation}\label{Asymptotic Sidi}
\zeta_n(s,\theta)  =  n\zeta_\ZZ(s) + n^{2s}\left(\sum_{k = 0}^{l} a_k(s) \zeta_{\RR/\ZZ}(s-k,\theta)n^{-2k} + \mathcal{O}(n^{-2-2l})\right).
\end{equation} 
Where the \( a_k(s) \) are given in the introduction.

Since \( \zeta_{\RR/\ZZ} \) vanishes at negative integers, one could expect that the asymptotic becomes an exact formula when \( s \) is an integer. In the case of positive \( s = m \ge 1 \) this is the formula \eqref{Even part of Theorem 1}. It will be a direct consequence of Corollary \ref{Polynomial behaviour}, and in fact, as already proven, this phenomenon holds as well for non-positive \( m \le 0 \) and takes the form \eqref{Eq:special value: theta and negative m}.

\begin{proof}[Proof of Theorem \ref{Theorem:trig}]
When \( s = m \) is a positive integer, the left-hand side of the asymptotic of \eqref{Asymptotic Sidi} is a polynomial in \( n \). Since the length \( l \ge 0 \) can be chosen arbitrarily, this implies that the asymptotic expansion terminates and becomes an exact formula. As \( \zeta_\ZZ \) vanishes at positive integers, as \( \zeta_\ZZ \) is the analytical continuation of the binomial coefficients, the formula \eqref{Even part of Theorem 1} follows and therefore Theorem \ref{Theorem:trig}.
\end{proof} 

\begin{proof}[Proof of Corollary \ref{Corollary no character}]
Fix \( n,m \ge 1 \) and let \( 0 < \theta < 1 \). In the left hand side of formula \eqref{Even part of Theorem 1}, the term \( j = 0 \) can be expanded as 
\[ \left[4\sin^2\left(\frac{\pi\theta}{n}\right)\right]^{-m} = \sum_{k = 0}^{m}a_{m-k}(m)\left(\frac{n}{2\pi\theta}\right)^{2k} + \mathcal{O}(\theta^2), \]
move it from the left hand side to the right hand side to obtain :
\begin{align*}
\sum_{j = 1}^{n-1} \left[4\sin^2\left(\frac{\pi (j+\theta)}{n}\right)\right]^{-m} & = \sum_{k = 0}^{m}a_{m-k}(m) \zeta_{\RR/\ZZ}(k,\theta)n^{2k}-\left[4\sin^2\left(\frac{\pi\theta}{n}\right)\right]^{-m} \\
& = \sum_{k = 0}^{m}a_{m-k}(m) (\zeta_{\RR/\ZZ}(2k,\theta)-(2\pi\theta)^{-2k})n^{2k} + \mathcal{O}(\theta^2).
\end{align*}
Now one can take the limit \( \theta \to 0 \) and the desired result follows.
\end{proof}

\begin{proof}[Proof of Corollary \ref{Corollaryspecial}]
Since 
\[ \sum_{r=1}^{q-1}\chi(r)(\partial_\theta^a\zeta_n)(s,r/q)= \left(-s\frac{2\pi}{n}\right)^a \cdot L_n(s,\chi), \]
for \( a \in \{0,1\} \) being the same parity as \( \chi \), the Corollary \ref{Polynomial behaviour} implies that for each \( m \ge 1 \) the map \( n \mapsto L_n(m,\chi) \) is a polynomial and therefore the asymptotic in Theorem \ref{Asymptotic for even} and in Theorem \ref{Asymptotic for L tilde} must stops and becomes exact formulas. For \( m = 0 \) and \( \chi \) even, the corresponding formula of Corollary \ref{Corollaryspecial} is trivial since both side are \( 0 \). For \( m = 0 \) and \( \chi \) odd, one has 
\begin{align*}
    L_n(0,\chi) & = \lim_{s \to 0}\frac{n}{2\pi}\frac{1}{s}\sum_{r=1}^{q-1}\chi(r)(\partial_\theta\zeta_n)(s,r/q) \\
    & = \frac{n}{2\pi}\sum_{r = 1}^{q-1}\chi(r) \frac{\partial}{\partial \theta}\Big|_{\theta = r/q}\zeta_n'(0,\theta).
\end{align*}
But the derivative at \( 0 \) of the spectral zeta function associated to the \( \theta \)-bundle is the log-determinant of \( \Delta_{n,\theta} \). That is 
\[ \zeta_n'(0,\theta) = -\log \det(\Delta_{n,\theta}) = -\log (4\sin^2(\pi\theta)). \]
In particular, it does not depend on \( n \). Thus, \( n \mapsto L_n(0,\chi) \) is a polynomial and the same argument follows. 
One could have argue alternatively that 
\begin{align*}
    L_n(0,\chi) & = \sum_{j=1}^{qn-1}\chi(j)\cot\left(\frac{\pi j}{qn}\right) \\
    & = \sum_{r=1}^{q-1}\chi(r)\sum_{j=0}^{n-1}\cot\left(\frac{\pi r}{qn}+\frac{\pi j}{n}\right) \\
    & = n\sum_{r = 1}^{q-1}\chi(r)\cot\left(\frac{\pi r}{q}\right),
\end{align*}
using a well-known cotangent formula. This results in the same polynomial behavior.
\end{proof}

A.K. Section de mathématiques, Université de Genève, rue du Conseil-Général 7-9, 1205 Genève, Suisse;  Matematiska institutionen, Uppsala universitet, Box 256, 751 05, Uppsala, Sweden,
anders.karlsson@unige.ch

D.M.
Section de mathématiques, Université de Genève, rue du Conseil-Général 7-9, 1205 Genève, Suisse,
dylan.mueller@unige.ch 

\end{document}